\documentclass[12pt]{article}

\usepackage[T1]{fontenc}
\usepackage{lmodern}
\usepackage{amsmath,amssymb,amsfonts,amsthm,mathtools}
\usepackage{enumitem}
\usepackage{xcolor}
\usepackage{graphicx}
\usepackage{tikz}
\usetikzlibrary{calc,shapes.geometric}
\usepackage[labelsep=period]{caption}

\usepackage{needspace}
\usepackage[pdftitle={A shorter proof of the linear list Hadwiger conjecture},
  pdfauthor={Yangyan Gu and Rongxing Xu},
  pdfsubject={A simplified proof of a linear list coloring bound},
  pdfkeywords={list coloring, complete minors, connectivity},
  pdfstartview=XYZ,bookmarks=true,colorlinks=true,
  linkcolor=blue,urlcolor=blue,citecolor=blue,
  linktocpage=true,hyperindex=true]{hyperref}

\allowdisplaybreaks
\numberwithin{equation}{section}
\setlist[enumerate]{label=\textup{(\arabic*)},ref=(\arabic*),leftmargin=2.35em,itemsep=0.2em,topsep=0.3em}
\setlist[itemize]{leftmargin=2em,itemsep=0.2em,topsep=0.3em}

\newtheorem{theorem}{Theorem}[section]
\newtheorem{lemma}[theorem]{Lemma}

\newtheorem{observation}[theorem]{Observation}

\newtheorem{claim}{Claim}[section]
\newtheorem{conjecture}[theorem]{Conjecture}
\theoremstyle{definition}
\newtheorem{definition}[theorem]{Definition}
\newtheorem{remark}[theorem]{Remark}

\newcommand{\cst}[1]{C_{\ref{#1}}}
\newcommand{\tst}[1]{T_{\ref{#1}}}
\newcommand{\ceil}[1]{\left\lceil #1\right\rceil}
\newcommand{\calP}{\mathcal P}
\newcommand{\calT}{\mathcal T}
\newcommand{\calQ}{\mathcal Q}
\newcommand{\calM}{\mathcal M}
\DeclareMathOperator{\ch}{ch}
\DeclareMathOperator{\Prob}{\mathbb P}
\DeclareMathOperator{\Ex}{\mathbb E}

\title{\large{\bfseries A shorter proof of the linear list Hadwiger conjecture}}
\author{Yangyan Gu\thanks{School of Mathematics, Physics and Information Science, Shaoxing University, Shaoxing, Zhejiang, 312000, China.
		Email: \href{mailto:yygu@usx.edu.cn}{\textcolor{black}{\texttt{yygu@usx.edu.cn}}}. Supported by the National Natural Science Foundation of China
		(Young Scientists Fund, Grant No.~12601684;
		General Program, Grant No.~12671412).}
\qquad Rongxing Xu\thanks{School of Mathematical Sciences, Zhejiang Normal University,
Jinhua, Zhejiang, 321004, China.
Email: \href{mailto:xurongxing@zjnu.edu.cn}{\textcolor{black}{\texttt{xurongxing@zjnu.edu.cn}}}.
Supported by the National Natural Science Foundation of China
(Young Scientists Fund, Grant No.~12401472).}
}
\date{\today}

\begin{document}
\maketitle
\vspace{-2em}

\begin{abstract}
The linear list Hadwiger conjecture asserts that every $K_t$-minor-free
graph is $O(t)$-choosable.
Following an approach outlined by Delcourt and Postle, we recently
established $O(t\log\log t)$-choosability.
Soon afterward, OpenAI provided a proof of the conjecture.
A key new ingredient of their proof is a linear bound on the choice number
of $K_t$-minor-free graphs with at most $t^{11/10}$ vertices.
In this paper, we give a relatively simple proof of this small-graph bound for
graphs with at most $t(\ln t)^5$ vertices.
Combining this with the argument from our earlier
$O(t\log\log t)$ bound, we obtain a shorter and somewhat more accessible
proof of the conjecture.
\end{abstract}

\section{Introduction}\label{sec:introduction}

All graphs are finite and simple. A graph $G$ contains an \emph{$H$ minor} if a
graph isomorphic to $H$ can be obtained from a subgraph of $G$ by contracting edges.

Hadwiger's conjecture~\cite{Hadwiger1943} states that every graph with no
$K_t$ minor is $(t-1)$-colorable. Kostochka~\cite{Kostochka1984} and
Thomason~\cite{Thomason1984} independently proved density bounds giving
$O(t\sqrt{\log t})$-colorability. Norin, Postle and
Song~\cite{NorinPostleSong2023} improved this to
$O(t(\log t)^\beta)$ for every fixed $\beta>1/4$.
Delcourt and Postle~\cite{DelcourtPostle2025} then obtained
$O(t\log\log t)$-colorability. Liu and Luo~\cite{LiuLuo2026}
subsequently improved this to $O(t\log\log\log t)$-colorability.
More recently, Norin and Steiner~\cite{NorinSteiner2026} proved the
linear Hadwiger conjecture, showing that every $K_t$-minor-free graph is
$O(t)$-colorable.

A \emph{list assignment} $L$ on a graph $G$ assigns a finite set
$L(v)$ of colors to each $v\in V(G)$.
For an integer $k\ge0$, a list assignment $L$ is a \emph{$k$-list assignment}
if $|L(v)|=k$ for every $v\in V(G)$.
An \emph{$L$-coloring} of $G$ is a function $\phi$ on $V(G)$ such that
$\phi(v)\in L(v)$ for every $v\in V(G)$ and $\phi(u)\ne\phi(v)$ for every $uv\in E(G)$.
A graph $G$ is \emph{$k$-choosable} if $G$ has an $L$-coloring
for every $k$-list assignment $L$ on $G$.
The \emph{choice number} (also called the \emph{list chromatic number}) of $G$,
denoted by $\ch(G)$, is the least nonnegative integer $k$ for which $G$ is
$k$-choosable.

The direct list version of Hadwiger's conjecture is false.
Voigt~\cite{Voigt1993} constructed a planar graph that is not
$4$-choosable. Bar\'at, Joret and Wood~\cite{BaratJoretWood2011}
showed that, for every integer $q\ge1$, there is a graph with no
$K_{3q+2}$ minor that is not $4q$-choosable.
Steiner~\cite{Steiner2022} later proved a lower bound of $(2-o(1))t$
for the maximum choice number of a $K_t$-minor-free graph.
Kawarabayashi and Mohar~\cite{KawarabayashiMohar2007} proposed the
following weaker conjecture.

\begin{conjecture}\label{conj:list-linear}
There is an absolute positive integer $C$ such that every $K_t$-minor-free
graph is $Ct$-choosable for every integer $t\ge2$.
\end{conjecture}

The classical density bound also implies
$O(t\sqrt{\log t})$-choosability. Norin and
Postle~\cite{NorinPostle2023} improved this to
$O(t(\log t)^\beta)$ for each fixed $\beta>1/4$, and
Postle~\cite{Postle2020} obtained $O(t(\log\log t)^6)$-choosability.
At the end of the first version~\cite{DelcourtPostle2021v1}
of~\cite{DelcourtPostle2025}, the authors noted that their methods and
those of~\cite{Postle2020} yield $O(t(\log\log t)^2)$-choosability.
The details were left to the reader.
In our earlier paper~\cite{GuXu2026}, we obtained
$O(t\log\log t)$-choosability by improving the bound used to color small
vertex sets in the construction.

Recently, OpenAI~\cite{OpenAI2026} provided a proof of Conjecture~\ref{conj:list-linear}.
An important ingredient of their proof is a linear bound on the choice
number of $K_t$-minor-free graphs with at most $t^{11/10}$ vertices.
For graphs with at most $t(\ln t)^5$ vertices, we give a relatively simple
proof of this small-graph bound.
Combining this restricted bound with established structural tools and
earlier separation and recursive constructions~\cite{Postle2020,DelcourtPostle2025,GuXu2026},
we obtain a shorter and somewhat more accessible proof of the conjecture.

\begin{theorem}\label{thm:main}
There is an absolute positive integer $C=\cst{thm:main}$ such that, for every integer $t\ge2$,
every $K_t$-minor-free graph is $Ct$-choosable.
\end{theorem}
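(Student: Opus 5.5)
The proof combines two ingredients: a linear choosability bound for small $K_t$-minor-free graphs, and the recursive framework of Postle and Delcourt--Postle as refined in our earlier $O(t\log\log t)$ paper~\cite{GuXu2026}. In that framework, the only place a superlinear loss appears is the step that colors ``small'' vertex sets, meaning sets of size $O(t\,\mathrm{polylog}\,t)$ produced by the separation and dense-minor arguments. So the plan is:
\begin{enumerate}
\item Prove a linear bound for small graphs.
\item Substitute it for the weaker bound at exactly that step.
\item Check that the recursion then closes with a linear number of colors.
\end{enumerate}

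\textbf{Step 1: the small-graph lemma.} We aim to show that there is an absolute $C_0$ such that every $K_t$-minor-free graph on at most $t(\ln t)^5$ vertices is $C_0t$-choosable.

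First, we may pass to a minimum counterexample. It has minimum degree at least $C_0t$, and every subgraph is at least as dense as a critical graph, so the graph is dense relative to its order. Since $n\le t(\ln t)^5$, the average degree $d\ge C_0t$ satisfies $n/d\le(\ln t)^5/C_0$.

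Next, we use the known fact that a graph with $n$ vertices and independence number $\alpha$ has a complete minor of order roughly $n/(2\alpha)$. Combined with density increments (Thomason-type results for small dense graphs), this shows that a $K_t$-minor-free graph whose order is within a polylog factor of its degree has very restricted structure. Concretely, we would find a $K_t$ minor unless the graph has many vertex-disjoint sparse pieces. Such sparse pieces are easy to list-color greedily.

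The list coloring itself would use a random partial coloring followed by completion. Give each vertex a uniformly random color from its list. The key quantity is the expected number of conflicts per vertex. The complement-sparsity forced by minor-freeness ensures that each color class in the lists appears on few neighbours, so a Lov\'asz Local Lemma or nibble argument leaves a residual graph whose degrees are small compared with the remaining list sizes. That residual graph is then colored greedily.

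\textbf{Step 2: plugging into the framework.} We reuse the structural tools from~\cite{Postle2020,DelcourtPostle2025,GuXu2026} as black boxes:
\begin{itemize}
\item the density increment theorem;
\item the existence of small highly connected subgraphs of a $K_t$-minor-free graph with minimum degree $\Omega(t)$;
\item the ``rooted minor / linkage'' lemma that links many disjoint small dense pieces into a $K_t$ minor.
\end{itemize}

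The argument then runs as follows. Take a minimum counterexample $G$ to $Ct$-choosability. It is $Ct$-critical, so it has minimum degree at least $Ct$. The framework yields either a $K_t$ minor, which is a contradiction, or a small set $S$ of order at most $t(\ln t)^5$ together with a separation. On $S$ we precolor using Step 1, reserving a constant fraction of each list, and we color $G-S$ by minimality. The separation ensures that the vertices of $S$ see few precolored neighbours outside, so the reserved colors suffice.

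\textbf{Step 3: the constants and the main obstacle.} In~\cite{GuXu2026}, the $\log\log t$ loss came solely from coloring the small sets. With a linear small-set bound, each level of the recursion uses only an additive $O(t)$ colors. The number of levels is bounded by a constant, because the size threshold $t(\ln t)^5$ exceeds what the framework needs. Hence $C$ is absolute.

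The main obstacle is Step 1, specifically extracting enough structure from $K_t$-minor-freeness for graphs of order $t\,\mathrm{polylog}\,t$ to make the semi-random coloring succeed. My first attempt would be the following. Use the independence-number minor bound to show that every vertex neighbourhood has independence number $\Omega(d/t)$, and hence contains large sparse sets. Then color greedily in a degeneracy-like order in which each vertex has at most $C_0t/2$ neighbours sharing colors in its list.
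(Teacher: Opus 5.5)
Your top-level plan (a linear bound for $K_t$-minor-free graphs on at most $t(\ln t)^5$ vertices, fed into the Postle/Delcourt--Postle machinery) is the paper's plan. But Step~1, which you rightly call the heart, has no mechanism that can work. Minor-freeness gives no control over how adversarial lists overlap, so the claim that each list color appears on few neighbours is unfounded. If all lists equal $[C_0t]$, your ``first attempt'' asks for an ordering in which every vertex has at most $C_0t/2$ earlier neighbours, that is, degeneracy at most $C_0t/2$. This fails squarely in your regime: dense random graphs on $\Theta(t\sqrt{\ln t})$ vertices (the Kostochka--Thomason extremal examples) are $K_t$-minor-free with minimum degree $\Theta(t\sqrt{\ln t})$. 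A Duchet--Meyniel lower bound of order $d/t$ on the independence number of neighbourhoods does not produce such an order. Nibble or Local Lemma arguments need local sparsity, which is absent, since $K_{t-1}$ is itself $K_t$-minor-free.

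The missing idea is to pass through fractional coloring. Reed--Seymour (Theorem~\ref{thm:local-fractional}) gives $\chi_f\le2(t-1)$. The paper turns this into a list coloring by randomly partitioning the color universe into $O(t)$ classes, used over $O(\log\ln t)$ stages. At stage $j$ there are $N_j=\lceil\beta(j+1)r_j\rceil$ classes, and independent sets restricted to $A_i=\{v:L(v)\cap C_i^j\ne\varnothing\}$ halve $\chi_f$ from $r_j=t2^{1-j}$ (Lemma~\ref{lem:eligible-thinning}), with $\sum_jN_j=O(t)$. Once $\chi_f<t/\ln t$, a final round with $4t$ classes covers every vertex, because $(1-\ln t/t)^{2t}\le t^{-2}$ and $n\le t(\ln t)^5$. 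The real content is that halving cannot fail without producing a $K_t$ minor:
\begin{itemize}
\item greedy deletions force $\alpha<n/r$;
\item concentration and the fractional Gir\~ao--Narayanan extraction (Lemma~\ref{lem:fractional-extraction}) yield a fixed $k$-connected $J$ with $\Ex[\chi_f(J[R])]>r/4$;
\item $\binom{2^{s+4}}2$ independent copies give disjoint sets, each carrying a $K_{2p}$ model, while $J$ minus their union stays highly connected (certified by few matchings, using small $\alpha$);
\item knitting these models together gives $K_t$.
\end{itemize}
Nothing in your sketch plays this role.

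Steps~2--3 also misdescribe how the bound is used. The argument is not a minimal-counterexample reduction in which a small set $S$ is precolored and $G-S$ is colored ``by minimality''. After reserving part of each list, the remaining lists have fewer than $Ct$ colors, so minimality does not apply, and the framework produces no such $S$. Everything is instead by contradiction with bad list assignments. The small-graph bound enters only through the deletion lemma (Lemma~\ref{lem:deletion}). There, a small set $X$ is colored from random sublists $L(v)\cap A$ of size $\Theta(t)$. The $O(|X|\ln t)$ vertices with at least $2t$ neighbours in $X$ (Lemma~\ref{lem:bipartite}) lose only $O(t)$ colors, and all others lose fewer than $2t$. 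So $G\setminus X$ still carries a \emph{bad} sublist of size $|L|-O(t+d)$. This drives the separation lemma (Lemma~\ref{lem:separation}), which yields two disjoint subgraphs each with a bad sublist, and the woven recursion assembles a $K_t$ minor from these. Finally, the number of recursion levels is not constant. Lemma~\ref{thm:recursive} inducts over $b=0,\dots,a$ with $3^a\approx t$. Linearity comes from the per-level loss $O(p)$ being absorbed by the drop $p\to2p/3$ in the requirement $|L|\ge C(3^a+p)$. The base case $p=2^a$ is handled by Theorem~\ref{thm:density}, since $2^a\sqrt a\ll3^a$.
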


The proof is by contradiction. Starting with a graph that is not
colorable from large lists, we find disjoint subgraphs that are still
not colorable from large sublists. Complete minors constructed
recursively in these subgraphs are joined into a larger complete minor.
The construction eventually produces a $K_t$ minor.

Section~\ref{sec:deletion} proves the bound for small graphs and develops tools for
vertex deletion and connectivity. Section~\ref{sec:fans} collects the
path and minor tools. Section~\ref{sec:separation} proves the separation
lemma, and Section~\ref{sec:main-proof} completes the recursive construction.

We end this section with some notation.
For a graph $G$ and a vertex set $X$, let $G[X]$ denote the subgraph
of $G$ induced by $V(G)\cap X$, and let
$G\setminus X=G[V(G)\setminus X]$.
The numbers of vertices and edges of $G$ are $\mathsf{v}(G)$ and
$\mathsf{e}(G)$, respectively. For a nonempty graph $G$, its \emph{density} is
$\mathsf{d}(G)=\mathsf{e}(G)/\mathsf{v}(G)$, which is half its average degree.
We write $N_G(v)$ for the set of neighbors of $v$ in $G$ and
$\delta(G)$ for the minimum degree of $G$.
For $X\subseteq V(G)$, its \emph{closed neighborhood} is
$N_G[X]=X\cup\bigcup_{v\in X}N_G(v)$.
The \emph{independence number} $\alpha(G)$ is the maximum order of an
independent set in $G$.
Paths and subgraphs are \emph{disjoint} if they have no common vertex.
For a family $\mathcal H$ of subgraphs, write
$V(\mathcal H)=\bigcup_{H\in\mathcal H}V(H)$.

For a list assignment $L$ on a nonempty graph $G$, write
$|L|=\min_{v\in V(G)}|L(v)|$. A list assignment $L$ is \emph{bad}
on $G$ if $G$ has no $L$-coloring.
For a subgraph $H$ of $G$, an $L$-coloring of $H$ is a proper
coloring $\phi$ of $H$ with $\phi(v)\in L(v)$ for every $v\in V(H)$.
A list assignment $L'$ on $H$ is a \emph{sublist assignment} of $L$,
written $L'\subseteq L$, if $L'(v)\subseteq L(v)$ for every $v\in V(H)$.

For a nonnegative integer $k$, let $[k]=\{1,\ldots,k\}$, with $[0]=\varnothing$.
We use $\ln$ for the natural logarithm.
For constants, numerical subscripts on $C$ and $T$ refer to the results
in which they are introduced. When a constant is introduced as $C=C_i$, we use $C$
within that statement and its proof, and retain the subscript when
referring to the constant elsewhere.
% Intersections involving paths or minor models refer to their vertex sets.
% All constants are absolute unless we state their dependence.
% In Lemma~\ref{thm:recursive}, $N$ is the connectivity constant and
% $\cst{thm:recursive}$ controls the list size.

\section{List coloring and connectivity}\label{sec:deletion}

A linear bound on the choice number of small graphs allows us to delete
vertices while retaining large bad sublists. We then use these sublists
to find highly connected subgraphs.

\subsection{List coloring of small graphs}\label{subsec:small-list-proof}

Recently, OpenAI~\cite{OpenAI2026} (Proposition~3.1) proved that every
$K_t$-minor-free graph with at most $t^{11/10}$ vertices is $O(t)$-choosable.
We need only the following weaker form, whose proof is postponed
to the end of this subsection.

\begin{lemma}\label{thm:local-list}
There are absolute integers $C=\cst{thm:local-list}\ge2$ and $\tst{thm:local-list}\ge3$ such that,
for every integer $t\ge \tst{thm:local-list}$, every $K_t$-minor-free graph
on at most $t(\ln t)^5$ vertices is $Ct$-choosable.
\end{lemma}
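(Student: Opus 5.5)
Let $G$ be a $K_t$-minor-free graph on $n\le t(\ln t)^5$ vertices and let $L$ be a $Ct$-list assignment. The plan is a semi-random "nibble" colouring in a constant number of rounds. Each round uses a fresh, disjoint block of colours from the lists. It is driven by the fractional chromatic number. The last few vertices are finished by degeneracy. The two structural inputs are:
\begin{enumerate}
\item every $K_t$-minor-free graph $H$ has fractional chromatic number at most $2t$ (Reed--Seymour). So there is a probability distribution on independent sets of $H$ that contains each vertex with probability at least $1/(2t)$, and this holds for every induced subgraph.
\item the dense-small-graph form of the Kostochka--Thomason bound: a $K_t$-minor-free graph on $m$ vertices is $O\bigl(t\sqrt{\log(m/t)+2}\bigr)$-degenerate.
\end{enumerate}

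\textbf{Round structure.} I would first pass to a minimal counterexample, so that $\delta(G)\ge Ct$. I then split each list $L(v)$ into blocks $L_1(v),\dots,L_r(v)$ of prescribed sizes $a_1t,\dots,a_rt$, with $\sum a_i\le C$. In round $i$ I process the colours $c$ appearing in the blocks $L_i$ one at a time. Let $V_c$ be the currently uncoloured vertices $v$ with $c\in L_i(v)$. I sample an independent set of $G[V_c]$ from the distribution given by (1) and colour it with $c$. An uncoloured vertex meets $a_it$ such trials, so it remains uncoloured after round $i$ with probability at most $(1-1/(2t))^{a_it}\le e^{-a_i/2}$. The colours of different blocks are disjoint across vertices only after a preliminary random splitting of the global palette into $r$ classes. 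That step costs only a constant factor by Chernoff, since $|L(v)|=Ct\gg\log n$.

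\textbf{Analysis.} The quantity to control is not only the number of uncoloured vertices but, for each uncoloured $v$, the number $d_i(v)$ of its uncoloured neighbours. The expectation of $d_i(v)$ shrinks by $e^{-a_i/2}$ per round. I would use Talagrand or bounded-differences concentration, which is affordable because $n$ is only polylogarithmically larger than $t$. This gives $d_i(v)\le e^{-(a_1+\dots+a_i)/2}\cdot n + o(t)$ simultaneously for all $v$ with positive probability.

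The plan is to let the first round use $a_1=\Theta(1)$. This already reduces the uncoloured set $U_1$ to size about $e^{-a_1/2}n$. By (2) applied to $G[U_1]$, the residual graph is then $O\bigl(t\sqrt{\log\log t}\bigr)$-degenerate. Subsequent rounds shrink the residual graph geometrically. Once the uncoloured set $U$ has size at most $Kt$ for a suitable constant $K$, (2) makes $G[U]$ $O(t)$-degenerate. The remaining $\Theta(t)$ untouched colours in each list then finish it greedily.

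\textbf{Main obstacle.} The crux is the step count. Reducing from $t(\ln t)^5$ to $Kt$ vertices needs a total survival factor of $(\ln t)^{-5}$, that is, $\sum a_i\approx 10\ln\ln t$, if one only tracks vertex counts. That is not constant. So I expect the real work to be a sharper accounting, and the first thing I would try is the following. Any vertex with fewer than $\tfrac12 Ct$ uncoloured neighbours can be deferred and coloured greedily at the end, so only vertices of large residual degree matter. A vertex of residual degree at least $\tfrac12 Ct$ inside a set of size $m$ forces density $\Omega(Ct)$ on $m$ vertices. By (2), this is impossible once $C\gg\sqrt{\log(m/t)}$. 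The goal is therefore to show that each round increases the deferral threshold faster than it depletes lists. I would aim for round $i$ to need only $a_i\approx 2^{-i}$ extra list-multiples, the gain coming from the fact that the bound in (2) improves with the ratio $m/t$. Making this bookkeeping close with a constant $C$ is where I expect the difficulty, and where the exponent $5$ in $t(\ln t)^5$ should enter.
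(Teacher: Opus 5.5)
\textbf{There is a genuine gap.} As you suspect, your plan does not close, and the repair you sketch cannot close it.

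\emph{Why the nibble only reaches $O(t\log\log t)$.} Your only lower bound on the probability that a trial colours a given vertex is $1/(2t)$, from Reed--Seymour. Bound (2) never improves this. The best it gives for an $m$-vertex residual graph is $\chi_f\le O\bigl(t\sqrt{\log(m/t)+2}\bigr)$, which is never $o(t)$. So spending $at$ colours lowers survival only by a factor $e^{-\Theta(a)}$.

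Your deferral argument finishes greedily only once the uncoloured set has size about $t\,e^{O(C^2)}$. It needs a nonempty $(Ct/2)$-core; one vertex of large residual degree forces no density. Reaching that size from $t(\ln t)^5$ costs $\Theta(t\ln\ln t)$ colours, however the rounds are scheduled. With $a_i\approx 2^{-i}$, the product $\prod_i e^{-a_i/2}$ stays above $e^{-1}$, so essentially nothing gets coloured. The bound in (2) improves only as $m$ drops, and making $m$ drop is exactly what costs colours. This reproduces the old $O(t\log\log t)$ bound, not a linear one.

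Your concentration claim is also unsupported. A sampled independent set may contain almost all of the graph, so one trial can change $d_i(v)$ by $\Theta(n)$, and bounded differences then gives nothing. In addition, your adaptive sequential process is not a function of independent inputs.

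\emph{The missing idea.} You need a mechanism that makes the per-trial success probability grow. The paper uses stages with $r_j=t2^{1-j}$. At stage $j$, only $N_j=\lceil\beta(j+1)r_j\rceil$ colour classes suffice to lower the fractional chromatic number of the uncoloured graph from $r_j$ to $r_j/2$ (Lemma~\ref{lem:eligible-thinning}). Since $\sum_j(j+1)2^{-j}<\infty$, this costs $O(t)$ colours until $\chi_f<t/\ln t$. After that, $4t$ further classes cover each remaining vertex with failure probability at most $t^{-2}$, and a union bound over $t(\ln t)^5$ vertices absorbs this.

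The halving is a minor-building argument, not a density bound. Its steps are:
\begin{enumerate}
\item Greedily delete independent sets of order $\lceil n/r\rceil$ until $\alpha<n/r$. This makes the Lipschitz constant $2n/r$ and gives a concentration exponent of about $t/(\ln t)^{14}$. This is exactly where $n\le t(\ln t)^5$ and $r\ge t/\ln t$ are used.
\item Suppose the random residual kept $\chi_f>r/2$. Concentration plus a separator decomposition then give a fixed $k$-connected piece $J$ in which the residual has expected $\chi_f>r/4$.
\item Take $q=\binom{2^{s+4}}{2}$ independent residuals in $J$ and make them disjoint. Each contains a $K_{2p}$ model by Theorem~\ref{thm:local-fractional}.
\item $J$ minus their union stays highly connected. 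This is certified by at most $(n+1)^{\alpha}$ test matchings, which again needs $\alpha$ small.
\item Knitting the $q$ models through the rest of $J$ yields a $K_t$ minor, a contradiction.
\end{enumerate}
Nothing in your proposal plays the role of this step, and (2) cannot replace it.
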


A \emph{fractional coloring} of $G$ assigns nonnegative weights $w(I)$
to its independent sets so that $\sum_{I\ni v}w(I)\ge1$ for every vertex
$v$. Its minimum total weight is the \emph{fractional chromatic number}
$\chi_f(G)$. By linear programming duality, $\chi_f(G)$ also equals
the maximum of $\sum_{v\in V(G)}x_v$ over nonnegative vertex weights
$x_v$ satisfying $\sum_{v\in I}x_v\le1$ for every independent set
$I$ of $G$.

Reed and Seymour~\cite{ReedSeymour1998} proved the following.

\begin{theorem}[\cite{ReedSeymour1998}]\label{thm:local-fractional}
For every integer $t\ge2$, every $K_t$-minor-free graph $G$ satisfies
$\chi_f(G)\le2(t-1)$.
\end{theorem}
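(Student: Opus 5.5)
The plan is to prove the dual form of the bound stated in the theorem. For every weighting $x\colon V(G)\to\mathbb R_{\ge0}$ of a $K_t$-minor-free graph $G$, I want an independent set $I$ with $x(I)\ge x(V(G))/(2(t-1))$. By the linear programming duality recalled just before the statement, this gives $\chi_f(G)\le 2(t-1)$. I would argue by induction on $t$, with $t=2$ trivial since then $G$ is edgeless. I may assume $G$ is connected, because a union of independent sets taken from different components is independent, so the inequality is additive over components.

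The basic structure is a breadth-first layering. Fix a root $r$ and let $L_i$ be the set of vertices at distance $i$ from $r$. For $i\ge1$, the set $L_0\cup\dots\cup L_{i-1}$ induces a connected subgraph, and every vertex of $L_i$ has a neighbour in $L_{i-1}$. Contracting $L_0\cup\dots\cup L_{i-1}$ to a single vertex therefore produces a vertex adjacent to every vertex of $L_i$. Consequently every component of $G[L_i]$ is $K_{t-1}$-minor-free. Moreover, layers $L_i$ and $L_j$ with $|i-j|\ge2$ are anticomplete to each other.

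Applied naively, this loses too much. Induction inside each layer, followed by choosing the heavier of the even and odd layers, gives only $x(V)/(4(t-2))$, and iterating this doubles the constant at every step. The point of the argument is to pay the factor $2$ only once, and I expect this to be the main obstacle.

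To avoid the doubling, I would strengthen the inductive statement so that it refers to a single layer rather than to the whole graph. Concretely, I would proceed as follows.

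\begin{itemize}
\item Choose a connected set $X$ greedily and grow it, keeping track of the weight in the ``frontier'' $N(X)\setminus X$.
\item Since $G[N(X)]$ together with the contracted $X$ is $K_t$-minor-free, the graph $G[N(X)\setminus X]$ is $K_{t-1}$-minor-free. Induction on this frontier gives an independent set carrying a $1/(2(t-2))$ fraction of the frontier weight.
\item Repeat the process in $G\setminus N[X]$ after discarding $N[X]$. The independent sets found in successive rounds are pairwise anticomplete, because the frontier of a later round lies outside the closed neighbourhood of every earlier $X$.
\item The union of these sets is then an independent set whose weight is at least $\frac{1}{2(t-2)}$ times the total frontier weight.
\end{itemize}

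The crux is to choose each $X$ so that the frontier carries at least a $\frac{t-2}{t-1}$ fraction of the weight removed in that round. One way to arrange this is to grow $X$ one vertex at a time, always adding a frontier vertex, and stop at the first moment when the weight of $X$ itself would exceed $\frac{1}{t-1}$ of the removed weight. If growing never stops in this way, then $X$ accumulates a long chain of connected pieces, each adjacent to all later ones, and I would try to show that such a chain yields a $K_t$ minor. This last step is where the detailed weight bookkeeping lies, and it is the step I am least sure of. If it fails, I would instead consult Reed and Seymour's original linear-programming argument for the precise strengthened hypothesis.
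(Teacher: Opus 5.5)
\textbf{There is a genuine gap.} The paper does not prove this theorem; it cites Reed and Seymour. Your outline does not supply a proof either, because two load-bearing steps fail as stated. You also announce a strengthened inductive statement but never formulate one: the frontier step just applies the unstrengthened case $t-1$.

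\textbf{The union of the independent sets need not be independent.} You claim the sets from successive rounds are pairwise anticomplete, but this is false. A later frontier lies in $V(G)\setminus N[X]$, so it is anticomplete to the earlier $X$. However, the earlier independent set sits in the earlier frontier $N(X)\setminus X$, and nothing prevents edges between the two frontiers. For example, on the path $v_1v_2v_3v_4v_5$, take $X=\{v_1\}$ with frontier $\{v_2\}$. Then take $X=\{v_4\}$ in the remaining path $v_3v_4v_5$, with frontier $\{v_3,v_5\}$. The second independent set may contain $v_3$, which is adjacent to $v_2$. You could repair this by also discarding the neighbours of the chosen set. But that throws away weight your bookkeeping never controls, which is exactly the even/odd loss of the layering argument you wanted to avoid.

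\textbf{The crux is false as formulated, not merely unproved.} Your independent set is drawn only from frontiers, so all weight placed in the sets $X$ is lost. You therefore need the frontiers to carry a $\frac{t-2}{t-1}$ fraction of the removed weight. Take $G=K_2$ with equal weights and $t\ge4$. Every connected $X$ has $N[X]=V(G)$ and a frontier carrying at most half the weight, and $\frac12<\frac23$. The same holds for $K_m$ with $m<t-1$. Yet $G$ has no $K_t$ minor, so the dichotomy ``a suitable $X$ exists, or there is a $K_t$ minor'' fails. Your growth rule also always terminates in a finite graph, at the latest when the frontier becomes empty. So the case ``growing never stops'' cannot produce the missing $K_t$ minor.

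\textbf{What a correct argument needs.} It must take weight from inside the connected sets themselves, and it must combine only pieces that are genuinely anticomplete. In the unweighted setting, the Duchet--Meyniel argument does this as follows.
\begin{enumerate}
\item In a connected graph, build a connected dominating set $D$ containing an independent set $S$ with $|D|\le 2|S|-1$, adding one connector for each new vertex of $S$.
\item Recurse into the components of $G\setminus D$. Their Hadwiger number is smaller because $D$ dominates them.
\item The sets $S$ at equal depth lie in distinct components, so their union is independent.
\end{enumerate}
With arbitrary weights the connectors can be heavy, and this accounting breaks down. The weighted statement, equivalent by LP duality to $\chi_f(G)\le 2(t-1)$, is what Reed and Seymour prove. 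Your sketch does not reach it.
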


If $\chi_f(G)\le r$ with $r>0$, dividing a fractional coloring by $r$ and
assigning any missing probability to the empty set gives a distribution
on independent sets containing every vertex with probability at least
$1/r$. Restriction cannot increase $\chi_f$, and adding an independent
set increases it by at most one.

For an integer $k\ge1$, a graph is \emph{$k$-connected} if it has more than $k$ vertices and
deleting fewer than $k$ vertices leaves it connected. Its vertex
connectivity is denoted by $\kappa(G)$.

The following lemma follows from the proof of Gir\~ao and
Narayanan~\cite{GiraoNarayanan2022} and was also used by
Delcourt and Postle~\cite{DelcourtPostle2025} (Theorem~5.1).

\begin{lemma}[\cite{GiraoNarayanan2022}]\label{thm:local-ordinary-extraction}
For every integer $k\ge1$, every graph $G$ with $\chi(G)\ge7k$ contains
a $k$-connected subgraph $J$ with $\chi(J)\ge\chi(G)-6k$.
\end{lemma}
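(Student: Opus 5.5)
The plan is to prove the statement by choosing an extremal subgraph and showing that a small separator in it would contradict its extremality. Let $c=\chi(G)$. Passing to a vertex-critical subgraph, I may assume that $G$ is $c$-vertex-critical. Then $\delta(G)\ge c-1\ge 7k-1$, and every proper induced subgraph has chromatic number less than $c$.

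The basic tool is a colouring inequality for separations. Suppose $(A,B)$ is a separation of an induced subgraph $H$ with $S=A\cap B$ and $|S|<k$. Colour $H[A]$ optimally, then colour $H[B\setminus A]$ from the palette with the colours used on $S$ removed. This gives
\[
\chi(H)\le\max\bigl(\chi(H[A]),\ \chi(H[B\setminus A])+k\bigr),
\]
and symmetrically with $A$ and $B$ exchanged. So across any small separation, either one closed side keeps the full chromatic number, or both open sides have chromatic number at least $\chi(H)-k$.

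The naive recursion descends into a side each time a separator of size less than $k$ appears, paying $k$ only when neither side keeps $\chi$. This fails because the losses can accumulate. The main obstacle is therefore to bound the total loss by $6k$ independently of the number of steps.

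To handle this, I would follow Girão and Narayanan and use an extremal choice in place of iteration. Among all induced subgraphs $J$ of $G$ with $\chi(J)\ge c-6k$, choose one minimising a potential that mixes vertex count and excess chromatic number. My first try is to minimise $|V(J)|$ subject to $\chi(J)\ge c-6k$ together with $|V(J)|$ not exceeding some side condition. If $J$ has a separation $(A,B)$ with $|S|<k$, pick it with $B\setminus A$ inclusion-minimal. By minimality $\chi(J[A])<c-6k$, so the inequality forces $\chi(J[B\setminus A])\ge\chi(J)-k$.

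The work is then to show that $J[B\setminus A]$ plus a bounded amount of $S$ still has chromatic number at least $c-6k$, contradicting minimality. For this I would use the spare budget between $7k$ and $6k$ together with the minimum-degree bound $\delta(G)\ge 7k-1$ inherited from criticality. The degree bound shows that each side of a small separator is large, which lets $J$ absorb the loss of at most $k$ once rather than repeatedly. In the end the cumulative loss is at most $k$ from the separator plus at most $5k$ from the degree and criticality slack, which totals $6k$.

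Finally, $\chi(J)\ge c-6k\ge k+1$ implies $|V(J)|>k$, so $J$ is $k$-connected by definition. The step I expect to be hardest is arranging the potential so that one separation step cannot cost more than the available slack. If the simple vertex-count potential fails, I would instead induct on $|V(G)|$ with the strengthened hypothesis that every graph $G$ with $\chi(G)\ge 7k$ and $\delta(G)\ge 7k-1$ yields $J$ with $\chi(J)\ge\chi(G)-6k$. I would then recurse only into a side that retains minimum degree at least $\chi-1$ after deleting $S$, at a cost of $k$ that is charged once.
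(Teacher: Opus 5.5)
The paper gives no proof of this lemma; it imports it from Gir\~ao and Narayanan. Judged on its own, your sketch does not prove it.

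You correctly identify the obstacle: up to $k$ is lost at each small separator, and these losses accumulate. But you never supply a mechanism that stops this, and the one concrete mechanism you propose cannot work. Suppose $J$ has minimum order subject to $\chi(J)\ge c-6k$. Then $J$ is just a $(c-6k)$-vertex-critical graph with $\chi(J)=c-6k$. A separation $(A,B)$ of $J$ of order less than $k$ then gives only $\chi(J[B\setminus A])\ge\chi(J)-|S|>c-7k$. Meanwhile your own minimality forces every induced subgraph of the proper subgraph $J[B]$ to have chromatic number below $c-6k$. That includes $J[B\setminus A]$ together with any part of $S$. So the step ``show that $J[B\setminus A]$ plus a bounded amount of $S$ still has chromatic number at least $c-6k$'' is not something you can establish. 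The only property of $J$ your argument uses is vertex-criticality, and vertex-critical graphs of every chromatic number can have $2$-vertex cuts (e.g.\ Haj\'os joins). Nothing you have proved rules this situation out.

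The appeal to $\delta(G)\ge 7k-1$ does not help. It is a property of $G$, not of $J$, which only has $\delta(J)\ge c-6k-1$. In any case, minimum degree does not control how much chromatic number is lost across a separator, and the accounting ``$k$ from the separator plus $5k$ from slack'' is not derived from anything.

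The fallback induction fails for the same reason. Adding the hypothesis $\delta\ge 7k-1$ is not a strengthening, since by criticality the new statement is equivalent to the original. Applying the hypothesis to a side $G'$ with $\chi(G')\ge\chi(G)-k$ gives only $\chi(J)\ge\chi(G)-7k$. Because $G$ is critical, no proper side has $\chi(G')\ge\chi(G)$, so the loss is charged again at every level; nothing in the hypothesis remembers earlier charges. A side that keeps minimum degree $\chi-1$ after deleting $S$ need not exist either. Finally, $c-6k\ge k+1$ fails when $c=7k$, and then $J=K_k$ would not be $k$-connected in the paper's sense.

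What is missing is an invariant that records the cost of a separator in a bounded ``boundary'' instead of subtracting it from $\chi$, so the two sides of a separation can be compared additively. This is the role of the additive constant in the standard proof of Mader's theorem. Here is one such quantity. If $G$ is $c$-vertex-critical and $\varnothing\ne I\subseteq V(G)$, then $\chi(G[I])\ge c-|N_G[I]\setminus I|$. To see this, colour $G\setminus I$ with $c-1$ colours, then colour $G[I]$ with the colours not used on $N_G[I]\setminus I$.

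Now let $(A,B)$ be a separation of $G[N_G[I]]$ of order less than $k$ with $I$ meeting both $A\setminus B$ and $B\setminus A$. Then $I_1=I\cap(A\setminus B)$ and $I_2=I\cap(B\setminus A)$ satisfy $|N_G[I_1]\setminus I_1|+|N_G[I_2]\setminus I_2|\le|N_G[I]\setminus I|+2|A\cap B|$. So if you choose $I$ of minimum size with $|N_G[I]\setminus I|\le 2k-2$, such separations cannot exist, and you still have $\chi(G[I])\ge c-2k+2$. The loss is now paid only once. You would still need to handle separations whose small side consists only of boundary vertices, and to extract a genuinely $k$-connected subgraph. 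But without some boundary-sensitive quantity of this kind the argument cannot close.
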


For a graph $G$ and an integer $s\ge1$, the \emph{strong product}
$G\boxtimes K_s$ has vertex set $V(G)\times[s]$.
Two distinct vertices $(u,i)$ and $(v,j)$ are adjacent if and only if
$u=v$ or $uv\in E(G)$. Each $C_v=\{v\}\times[s]$ is therefore a clique
of order $s$. Since the second factor is complete, this is also the
\emph{lexicographic product} of $G$ and $K_s$
(see pp.~51 and~56 in \cite{HammackImrichKlavzar2011}).
The following lemma follows from
\cite[Corollary~3.4.5]{ScheinermanUllman1997} and
\cite[Theorem~26.15]{HammackImrichKlavzar2011}.

\begin{lemma}\label{lem:clique-product}
For every graph $G$ and integer $s\ge1$, we have
$\chi_f(G\boxtimes K_s)=s\chi_f(G)$.
Moreover, for every graph $G$, there is a positive integer $d$ such that
$\chi(G\boxtimes K_d)=d\chi_f(G)$.
\end{lemma}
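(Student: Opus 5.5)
The first identity I would prove by exhibiting a fractional coloring of $G\boxtimes K_s$ of weight $s\chi_f(G)$ and, via the LP duality recalled above, a matching vertex weighting. For the second, I would turn a rational optimal fractional coloring of $G$ into a proper coloring of $G\boxtimes K_d$, taking $d$ to be a common denominator of its weights.

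\emph{Upper bound.} Let $w$ be an optimal fractional coloring of $G$. For each independent set $I$ of $G$ and each $i\in[s]$, the set $I\times\{i\}$ is independent in $G\boxtimes K_s$: two of its vertices $(u,i),(v,i)$ are distinct, so $u\ne v$, and $uv\notin E(G)$. Give $I\times\{i\}$ weight $w(I)$. A vertex $(v,i)$ then receives total weight $\sum_{I\ni v}w(I)\ge1$, and the total weight is $s\sum_I w(I)=s\chi_f(G)$.

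\emph{Lower bound.} Let $x$ be an optimal dual weighting for $G$, and set $y_{(v,i)}=x_v$. Take any independent set $J$ of $G\boxtimes K_s$.
\begin{itemize}
\item $J$ meets each clique $C_v$ in at most one vertex, so the projection $J\to V(G)$ is injective.
\item Its image is independent in $G$, because $uv\in E(G)$ makes every vertex of $C_u$ adjacent to every vertex of $C_v$.
\end{itemize}
Hence $\sum_{z\in J}y_z\le1$, and $\sum_z y_z=s\sum_v x_v=s\chi_f(G)$. This gives $\chi_f(G\boxtimes K_s)=s\chi_f(G)$.

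\emph{Choosing $d$.} The LP defining $\chi_f(G)$ has rational data and is feasible and bounded, so it has an optimal solution at a vertex of the polyhedron. That solution has rational weights. Choose $d\ge1$ with $w(I)=a_I/d$ for nonnegative integers $a_I$. Then:
\begin{itemize}
\item $\sum_I a_I=d\chi_f(G)$ is an integer;
\item $\sum_{I\ni v}a_I\ge d$ for every vertex $v$.
\end{itemize}

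\emph{Coloring $G\boxtimes K_d$.} Give each independent set $I$ its own set $P_I$ of $a_I$ colors, with the sets $P_I$ pairwise disjoint. In total this uses $d\chi_f(G)$ colors. For each $v$, the colors in $\bigcup_{I\ni v}P_I$ number at least $d$; assign $d$ distinct ones of them to the $d$ vertices of $C_v$. This coloring is proper:
\begin{itemize}
\item Vertices within one clique $C_v$ receive distinct colors by construction.
\item If $uv\in E(G)$, no independent set contains both $u$ and $v$. So the colors available to $u$ and to $v$ come from different sets $P_I$, which are disjoint.
\end{itemize}
Thus $\chi(G\boxtimes K_d)\le d\chi_f(G)$. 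Conversely, $\chi\ge\chi_f$ together with the first part gives $\chi(G\boxtimes K_d)\ge\chi_f(G\boxtimes K_d)=d\chi_f(G)$, so equality holds.

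The only mildly delicate point is rationality of an optimal fractional coloring, which is needed so that $d$ exists. I would justify it by the standard fact that a linear program with rational coefficients attains its optimum at a basic feasible solution, and such a solution is rational by Cramer's rule. Everything else is direct verification.
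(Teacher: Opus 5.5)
Your argument is correct. The paper gives no proof of its own here. It deduces both parts from the cited results of Scheinerman--Ullman (Corollary~3.4.5) and Hammack--Imrich--Klav\v{z}ar (Theorem~26.15), which supply the standard facts that $\chi_f$ is multiplicative under lexicographic products and that $\chi_f(G)=\chi(G\boxtimes K_b)/b$ for some $b$.

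You instead give a self-contained LP proof. Lifting an optimal fractional coloring and an optimal dual weighting from $G$ to $G\boxtimes K_s$ needs only the easy case $K_s$ of the product formula. The existence of $d$ reduces to rationality of an optimal basic solution of the covering LP, which is the one non-combinatorial input behind the cited attainment result.

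The citation is shorter. Your route keeps the paper self-contained and gives slightly more: $\chi(G\boxtimes K_d)=d\chi_f(G)$ holds for every common denominator $d$ of an optimal rational fractional coloring, hence for every multiple of such a $d$. That closure under multiples is exactly what the block-partition step at the start of the proof of Lemma~\ref{lem:fractional-extraction} re-derives from the bare existence statement. With your version, that step would be immediate.
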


The following lemma is the fractional version of
Lemma~\ref{thm:local-ordinary-extraction}.

\begin{lemma}\label{lem:fractional-extraction}
For every integer $k\ge1$, a graph $G$ with $\chi_f(G)\ge7k$ has an
induced $k$-connected subgraph $J$ with $\chi_f(J)\ge\chi_f(G)-6k$.
\end{lemma}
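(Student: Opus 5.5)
We reduce to Lemma~\ref{thm:local-ordinary-extraction} through the blow-up $G\boxtimes K_d$ of Lemma~\ref{lem:clique-product}. Let $r=\chi_f(G)\ge7k$. By Lemma~\ref{lem:clique-product}, choose $d\ge1$ with $\chi(G\boxtimes K_d)=dr$. The aim is to apply Lemma~\ref{thm:local-ordinary-extraction} to $H=G\boxtimes K_d$ with connectivity parameter $dk$. This is legitimate because $\chi(H)=dr\ge7dk$. We obtain a $dk$-connected subgraph $J'\subseteq H$ with $\chi(J')\ge dr-6dk$. We may replace $J'$ by the induced subgraph $H[V(J')]$: this keeps the same vertex set, only adds edges, and so preserves both $dk$-connectivity and the chromatic lower bound.

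Next, project $J'$ back to $G$. Let $X=\{v\in V(G): (v,i)\in V(J')\text{ for some }i\}$ and $J=G[X]$. Then $H[V(J')]$ is an induced subgraph of $J\boxtimes K_d$. Using monotonicity of $\chi_f$ under restriction together with Lemma~\ref{lem:clique-product},
\[
d(r-6k)\le\chi(J')\le \chi_f(J')\cdot\frac{\chi(J')}{\chi_f(J')}
\]
is not directly useful. Instead we compare fractional numbers: since $\chi_f(J')\ge \chi(J')/\,$? fails in general. So the cleaner route is to bound $\chi(J')$ above via $\chi(J\boxtimes K_d)$. Choose $d$ so that it works simultaneously for every induced subgraph of $G$. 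This is possible because there are finitely many induced subgraphs: let $d$ be a common multiple of the individual $d_Y$ from Lemma~\ref{lem:clique-product}, for $Y\subseteq V(G)$. We need that if $\chi(Y\boxtimes K_{d_Y})=d_Y\chi_f(Y)$, then the same equality holds for multiples $md_Y$. This follows since
\[
\chi(Y\boxtimes K_{md_Y})\le m\,\chi(Y\boxtimes K_{d_Y})
\]
(color each of the $m$ copies with a disjoint palette), together with the general bound $\chi\ge\chi_f= md_Y\chi_f(Y)$. With this $d$, we get $d(r-6k)\le\chi(J')\le\chi(J\boxtimes K_d)=d\chi_f(J)$, hence $\chi_f(J)\ge r-6k$.

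The main obstacle is showing that $J=G[X]$ is $k$-connected. Since $J'$ is $dk$-connected, it has more than $dk$ vertices. Each vertex of $G$ contributes at most $d$ vertices to $J'$, so $|X|>k$. Now suppose $S\subseteq X$ with $|S|<k$, and suppose $J\setminus S$ is disconnected, with $u,w$ in different components. Remove $S\times[d]$ from $J'$; this deletes at most $d(k-1)<dk$ vertices, so $J'$ stays connected. Pick $(u,i),(w,j)\in V(J')$, which exist by the definition of $X$. A $J'$-path between them avoiding $S\times[d]$ projects to a walk in $G[X\setminus S]$ from $u$ to $w$: consecutive vertices either share a first coordinate or are adjacent in $G$. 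This contradicts the choice of $u$ and $w$. Hence $J$ is an induced $k$-connected subgraph of $G$ with $\chi_f(J)\ge\chi_f(G)-6k$. The delicate points are the uniform choice of $d$ and this projection of connectivity; the rest is bookkeeping.
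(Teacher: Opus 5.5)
Your proof is correct and follows essentially the same route as the paper. You take $d$ to be a common multiple of the $d_F$ over all induced subgraphs $F$, using disjoint palettes on blocks to get $\chi(F\boxtimes K_d)=d\chi_f(F)$ uniformly. You then apply Lemma~\ref{thm:local-ordinary-extraction} to $G\boxtimes K_d$ with parameter $dk$, project back to $G$ to get $k$-connectivity, and conclude from $\chi(J')\le\chi(J\boxtimes K_d)=d\chi_f(J)$.

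In a final write-up, fix the uniform $d$ at the outset and delete the abandoned false start about $\chi(J')/\chi_f(J')$. Passing to $H[V(J')]$ is harmless but unnecessary.
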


\begin{proof}
For each induced subgraph $F$ of $G$, choose a positive integer $d_F$
as in Lemma~\ref{lem:clique-product}, and let $s$ be the least common
multiple of these integers. For each $F$, partition $[s]$ into $s/d_F$
blocks of size $d_F$ and color the corresponding copies of
$F\boxtimes K_{d_F}$ with disjoint palettes, using
$(s/d_F)\cdot d_F\cdot\chi_f(F)=s\chi_f(F)$ colors in total. Also by Lemma~\ref{lem:clique-product},
$\chi(F\boxtimes K_s)\ge\chi_f(F\boxtimes K_s)=s\chi_f(F)$, and hence
$\chi(F\boxtimes K_s)=s\chi_f(F)$ for every induced subgraph $F$ of $G$.

Now $\chi(G\boxtimes K_s)=s\chi_f(G)\ge7sk$.
By Lemma~\ref{thm:local-ordinary-extraction}, $G\boxtimes K_s$ has an
$sk$-connected subgraph $J'$ with
$\chi(J')\ge s\chi_f(G)-6sk$.
Let $J$ be the subgraph of $G$ induced by the vertices $v$ for which
$C_v$ contains a vertex of $J'$. Since $J'$ has more than $sk$ vertices
and each $C_v$ has only $s$ vertices, $J$ has more than $k$ vertices.
For any $X\subseteq V(J)$ with $|X|<k$, delete from $J'$ all copies of
vertices in $X$. Fewer than $sk$ vertices are deleted, so the remaining
graph is connected. Replacing each remaining copy by its original
vertex produces a connected subgraph spanning $J\setminus X$.
Hence $J\setminus X$ is connected, and $J$ is $k$-connected.
Finally, $J'\subseteq J\boxtimes K_s$ and the identity above applied to
$J$ imply
$\chi_f(J)=\chi(J\boxtimes K_s)/s\ge\chi(J')/s\ge\chi_f(G)-6k$.
\end{proof}

A graph is \emph{$k$-knit} if it has at least $k$ vertices and every
partition of $k$ distinct vertices into nonempty parts can be contained
in pairwise disjoint connected subgraphs, one for each part.
Delcourt and Postle~\cite{DelcourtPostle2025} proved the following as the
case $\ell=1$ of Theorem~5.8 in their paper.

\begin{lemma}[\cite{DelcourtPostle2025}]\label{thm:knit}
There is an absolute integer $C=\cst{thm:knit}\ge1$ such that every
$Ck$-connected graph is $k$-knit, for every integer $k\ge1$.
\end{lemma}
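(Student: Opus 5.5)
I would derive knittedness from linkedness. Recall that a graph is \emph{$\ell$-linked} if for any $2\ell$ distinct vertices $s_1,t_1,\dots,s_\ell,t_\ell$ there are pairwise disjoint paths $P_1,\dots,P_\ell$ with $P_i$ joining $s_i$ to $t_i$. I would use the theorem of Bollob\'as and Thomason (with the later improvement by Thomas and Wollan as an alternative): every $22\ell$-connected graph is $\ell$-linked. This result is not stated in the paper, so it is an outside citation. The plan is to show that $C=\cst{thm:knit}=22$ works, or any larger absolute constant.

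Let $G$ be $Ck$-connected and let $S$ be a set of $k$ distinct vertices, partitioned into nonempty parts $S_1,\dots,S_p$. Since $G$ has more than $Ck\ge k$ vertices, the size condition in the definition of $k$-knit holds. Write each part as $S_j=\{v^j_1,\dots,v^j_{m_j}\}$. A singleton part needs nothing. For a part with $m_j\ge2$, I want to realize it as a chain of paths $v^j_1\to v^j_2\to\cdots\to v^j_{m_j}$.

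Each interior vertex $v^j_i$ with $2\le i\le m_j-1$ would be an endpoint of two paths, which linkage does not allow. To fix this, I attach to each such $v^j_i$ a private neighbour $u^j_i$. I choose these greedily so that they are distinct from each other and from all of $S$. This is possible because $\delta(G)\ge\kappa(G)\ge Ck$, while fewer than $2k$ vertices are forbidden at each step.

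Now form the terminal pairs $(v^j_1,v^j_2),(u^j_2,v^j_3),\dots,(u^j_{m_j-1},v^j_{m_j})$ for every part $j$. There are $\sum_j(m_j-1)\le k$ pairs, and all of their at most $2k$ terminals are distinct. Since $G$ is $22k$-connected, it is $k$-linked, so there are pairwise disjoint paths joining all these pairs; with fewer than $k$ pairs we simply add dummy pairs or use monotonicity of linkedness.

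For each part $j$, take the union $T_j$ of its paths together with the edges $v^j_iu^j_i$. The pieces of $T_j$ are glued at $v^j_i$ through the edge to $u^j_i$, so $T_j$ is connected and contains $S_j$. For different parts, the sets $T_j$ use disjoint paths and distinct private neighbours, hence they are disjoint. One subtlety is that a path for one part must not pass through a terminal or private neighbour of another part. This is automatic, because all of these vertices are endpoints of linkage paths and the linkage paths are disjoint.

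The main obstacle is the linkage theorem itself, which I am importing rather than proving. A self-contained proof would go through Mader's theorem: a graph of high connectivity, hence high minimum degree, contains a dense highly connected minor. One would then route the terminals into a $K_{2k}$-like structure using Menger's theorem. That argument is the genuinely hard part, and here I would simply cite it.
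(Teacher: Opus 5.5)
You set aside singleton parts (``a singleton part needs nothing''), and this breaks the disjointness step. You then argue that no linkage path can pass through a vertex of another part because ``all of these vertices are endpoints of linkage paths.'' That is false for the vertex $w$ of a singleton part $\{w\}$. It is not a terminal, so nothing prevents a linkage path from running through it. If one does, some $T_j$ contains $w$, and it meets the connected subgraph required for $\{w\}$, which must contain $w$. Already for $k=3$ with parts $\{a,b\}$ and $\{w\}$, your construction is a single $(a,b)$-path, and that path may pass through $w$.

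The repair is easy: make every singleton vertex a terminal as well.
\begin{itemize}
\item The paper's Definition~\ref{def:linked} allows pairs with $s_i=t_i$, realized by the one-vertex path. So you can add the trivial pair $(w,w)$, which forces all other paths to avoid $w$.
\item With your all-distinct convention, add instead a pair $(w,u_w)$ with a fresh private neighbour $u_w$.
\end{itemize}
Let $p$ be the number of parts and $s$ the number of singleton parts. There are then $k-p+s\le k$ pairs. At most $k$ private neighbours are chosen, so fewer than $2k$ vertices are forbidden at each greedy step. The rest of your argument goes through unchanged. Alternatively, you can delete the singleton vertices first, losing at most $k$ in connectivity.

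With this fix your proof is correct, but it differs from the paper. The paper gives no argument for this lemma; it simply quotes the case $\ell=1$ of Theorem~5.8 of Delcourt and Postle. Your chaining and private-neighbour reduction is self-contained modulo a linkage theorem. That theorem does appear in the paper, as Lemma~\ref{thm:linked} (Thomas--Wollan: every $10k$-connected graph is $k$-linked). So your reduction with the trivial-pair fix gives $\cst{thm:knit}=10$ directly, with no outside citation needed.
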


The next lemma uses fixed matchings to check connectivity after deletion.

\begin{lemma}\label{lem:local-cut-tests}
Let $G$ be an $n$-vertex $k$-connected graph with $\alpha(G)\le s$,
where $s$ is a positive integer and $k$ is sufficiently large.
There is a family $\mathcal M$ of at most $(n+1)^s$ matchings, each of
order $\lfloor k/8\rfloor$, such that for every $Z\subseteq V(G)$
satisfying the conditions
\begin{enumerate}
\item every vertex has at least $k/2$ neighbors in $V(G)\setminus Z$, and
\item each matching in $\mathcal M$ has fewer than $k/32$ endpoints in $Z$,
\end{enumerate}
we have that $G\setminus Z$ is $\lfloor k/32\rfloor$-connected.
\end{lemma}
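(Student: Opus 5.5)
The plan is to attach one matching to each ordered pair $(I_1,I_2)$ of disjoint vertex sets whose union is independent in $G$ (so $|I_1\cup I_2|\le\alpha(G)\le s$). Each matching consists of boundary edges of the set $Y=N_G[I_1]\setminus N_G[I_2]$. I then show that a separation of $G\setminus Z$ with a small separator forces one of these matchings to meet $Z$ many times.

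\textbf{Construction.} Fix such a pair $(I_1,I_2)$ and put $Y=Y(I_1,I_2)=N_G[I_1]\setminus N_G[I_2]$.
\begin{enumerate}
\item If $|Y|\ge k/8$ and $|V(G)\setminus Y|\ge k/8$, Menger's theorem and the $k$-connectivity of $G$ give $\lfloor k/8\rfloor$ disjoint paths from $Y$ to $V(G)\setminus Y$.
\item On each path take the first edge leaving $Y$. These edges have distinct ends in $Y$ and distinct ends outside $Y$ because the paths are disjoint, so they form a matching $M(I_1,I_2)$ of order $\lfloor k/8\rfloor$.
\item Let $\mathcal M$ be the set of all matchings obtained this way.
\end{enumerate}
For the count, encode $(I_1,I_2)$ as a sequence of length at most $s$ over $V(G)\cup\{\ast\}$: list $I_1$, then a separator symbol, then $I_2$, padding with the blank symbol. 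This gives at most $(n+1)^s$ sequences, up to the small detail of fitting the separator when $|I_1\cup I_2|=s$. I expect this bookkeeping to be the fiddliest (though not deep) step. If it resists, I would use the slack in ``$k$ sufficiently large'' or handle the case $|I_1\cup I_2|=s$ by a maximality argument.

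\textbf{Verification.} Suppose $Z$ satisfies (1) and (2), but $G\setminus Z$ is not $\lfloor k/32\rfloor$-connected. Then there is a partition $V(G)\setminus Z=A\cup S\cup B$ with $A,B\neq\varnothing$, $|S|<k/32$, and no edges between $A$ and $B$. (The case $|V(G)\setminus Z|\le k/32$ is excluded because by (1) every vertex has $k/2$ neighbours outside $Z$.)
\begin{enumerate}
\item By (1), a vertex of $A$ has at least $k/2$ neighbours in $A\cup S$, so $|A|\ge k/2-k/32\ge k/8$. Similarly $|B|\ge k/8$.
\item Let $I_A$ and $I_B$ be maximal independent sets of $G[A]$ and $G[B]$. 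Since $A$ and $B$ are nonadjacent in $G$ (both avoid $Z$), $I_A\cup I_B$ is independent in $G$. Also $A\subseteq N[I_A]$ and $A\cap N[I_B]=\varnothing$, so $A\subseteq Y:=Y(I_A,I_B)$, and symmetrically $B\cap Y=\varnothing$. Hence the size conditions hold and $M=M(I_A,I_B)\in\mathcal M$.
\item Take an edge $xy\in M$ with $x\in Y$ and $y\notin Y$. Then $x\in N[I_A]$, so $x\in A\cup S\cup Z$, because a vertex of $B$ has no neighbour in $I_A\subseteq A$.
\item For $y$: either $y\in N[I_B]$, so $y\in B\cup S\cup Z$; or $y\notin N[I_A]\cup N[I_B]\supseteq A\cup B$, so $y\in S\cup Z$.
\item If neither endpoint lay in $S\cup Z$, we would have $x\in A$ and $y\in B$, an edge between $A$ and $B$, which is impossible. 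So every edge of $M$ has an endpoint in $S\cup Z$.
\end{enumerate}
Since $M$ is a matching, $\lfloor k/8\rfloor\le|S|+|V(M)\cap Z|<k/32+k/32$, using (2). This is a contradiction for large $k$.

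The conceptual crux is the choice $Y=N[I_1]\setminus N[I_2]$. The bound $\alpha(G)\le s$ makes it describable by few vertices, and it sandwiches $A\subseteq Y\subseteq V(G)\setminus B$ robustly enough that every boundary edge of $Y$ is forced into $S\cup Z$.
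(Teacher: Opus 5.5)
Your verification is correct and is essentially the paper's argument: take a maximal independent set on one side of the separation, use a fixed crossing matching of its closed neighbourhood, and note that every matching edge must meet $S\cup Z$. The gap is in the bound $|\mathcal M|\le(n+1)^s$. This bound is part of the statement, and your proposal does not establish it. Indexing by ordered pairs $(I_1,I_2)$ gives a crude count of $\sum_{i\le s}2^i\binom{n}{i}$ indices. For small $s$ this is larger than $(n+1)^s$; already for $s=1$ it is $2n+1$. Your encoding only covers pairs with $|I_1|+|I_2|\le s-1$, because the separator needs a position. Appealing to ``$k$ sufficiently large'' cannot help, since the required bound does not involve $k$.

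The simplest repair is to notice that $I_2$ is superfluous. Take $Y=N_G[I_A]$. You still have $A\subseteq Y$, and $Y\cap B=\varnothing$ because no vertex of $B$ has a neighbour in $A$. In your step~4, a vertex $y\notin N_G[I_A]$ is not in $A$, so it lies in $B\cup S\cup Z$. The rest of the argument goes through unchanged. Now $\mathcal M$ is indexed by single independent sets of order at most $s$, so $|\mathcal M|\le\sum_{i=0}^{s}\binom{n}{i}\le(n+1)^s$. This is exactly the paper's proof; the paper uses the size threshold $k/3$ where you use $k/8$, which makes no difference.

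If you prefer to keep pairs, your maximality idea can be completed as follows. Suppose $|I_1\cup I_2|=s$. Then $I_1\cup I_2$ is a maximum, hence maximal, independent set, so $N_G[I_1]\cup N_G[I_2]=V(G)$ and $Y=V(G)\setminus N_G[I_2]$. If $I_1=\varnothing$, then $Y=\varnothing$ and no matching is needed. Otherwise $Y$ has the same crossing edges as $N_G[I_2]=Y(I_2,\varnothing)$, where $|I_2|\le s-1$. So you can reuse $M(I_2,\varnothing)$, and only pairs that fit your encoding are needed.
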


\begin{proof}
For each $S\subseteq V(G)$ with $|S|\le s$ such that $N_G[S]$ and its
complement both have order at least $k/3$, fix a matching $M_S$ of order
$\lfloor k/8\rfloor$ between $N_G[S]$ and $V(G)\setminus N_G[S]$.
If no such matching existed, the endpoints of a maximum crossing
matching would separate the two nonempty sides after fewer than $k/4$
vertices were deleted, contrary to $k$-connectivity.
Let $\mathcal M$ be the family of these matchings. Thus we have
\[
 |\mathcal M|\le\sum_{i=0}^{s}\binom{n}{i}
 \le\sum_{i=0}^{s}n^i\le(n+1)^s.
\]

Fix $Z$ satisfying (1) and (2).
By (1), $G\setminus Z$ has minimum degree at least $k/2$, and hence
more than $\lfloor k/32\rfloor$ vertices.
Suppose that deleting $X\subseteq V(G)\setminus Z$ of order less than $k/32$ leaves two
nonempty sides $Y_1,Y_2$ with no edges between them.
Every vertex of either side has more than $k/3$ neighbors in that side.
Choose a maximal independent set $S$ of $G[Y_1]$.
Then $|S|\le s$, $Y_1\subseteq N_G[S]$ by maximality, and
$Y_2\cap N_G[S]=\varnothing$.
Since $|Y_1|,|Y_2|>k/3$, we have $M_S\in\mathcal M$.
Every edge of $M_S$ meets $Z\cup X$, since otherwise it would join
$Y_1$ to $Y_2$.
But (2) and $|X|<k/32$ imply that $Z\cup X$ contains fewer than $k/16$
of its endpoints. This is too few to meet $\lfloor k/8\rfloor$ disjoint
edges, a contradiction.
\end{proof}

A \emph{$K_q$ model} consists of $q$ pairwise disjoint nonempty connected
subgraphs, called \emph{branch sets}, with an edge between every two.
For the random choices in the thinning argument, we use the following
bounded differences inequality of McDiarmid~\cite{McDiarmid1989}.

\begin{lemma}[\cite{McDiarmid1989}]\label{lem:bounded-differences}
Let $X_1,\ldots,X_m$ be independent random variables, and let $f$ be
a real-valued function of $m$ variables.
Suppose that changing any one input of $f$, while keeping all other
inputs fixed, changes its value by at most $c>0$.
Then, for $Y=f(X_1,\ldots,X_m)$ and every $a>0$, we have
\[
 \Prob\bigl(|Y-\Ex[Y]|\ge a\bigr)
 \le2\exp\left(-\frac{2a^2}{mc^2}\right).
\]
\end{lemma}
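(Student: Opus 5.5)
This is McDiarmid's classical inequality, cited from~\cite{McDiarmid1989}. If I had to prove it, I would use the standard martingale method: build the Doob martingale of $Y$, show that each increment is confined to an interval of length at most $c$, and then apply Hoeffding's lemma step by step (the Azuma--Hoeffding argument). I assume $f$ is measurable and $Y$ is integrable, as is implicit in the statement.

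\textbf{Martingale.} Let $\mathcal F_i$ be the $\sigma$-algebra generated by $X_1,\ldots,X_i$. Set $Z_i=\Ex[Y\mid\mathcal F_i]$, so that $Z_0=\Ex[Y]$, $Z_m=Y$, and $Y-\Ex[Y]=\sum_{i=1}^m D_i$ with $D_i=Z_i-Z_{i-1}$. By independence, $Z_i=g_i(X_1,\ldots,X_i)$, where
\[
 g_i(x_1,\ldots,x_i)=\Ex\,f(x_1,\ldots,x_i,X_{i+1},\ldots,X_m).
\]
The key point is that the distribution of $(X_{i+1},\ldots,X_m)$ does not depend on $x_i$. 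Hence for any two values $x,x'$ of the $i$-th coordinate,
\[
 g_i(x_{<i},x)-g_i(x_{<i},x')=\Ex\bigl[f(x_{<i},x,X_{>i})-f(x_{<i},x',X_{>i})\bigr]\le c .
\]
So, conditional on $\mathcal F_{i-1}$, the increment $D_i$ lies in the interval $[L_i,L_i+c]$. Here $L_i=\inf_x g_i(X_{<i},x)-Z_{i-1}$ is $\mathcal F_{i-1}$-measurable, and $\Ex[D_i\mid\mathcal F_{i-1}]=0$.

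\textbf{Exponential moments.} Next I apply Hoeffding's lemma: a mean-zero random variable supported in an interval of length $c$ satisfies $\Ex e^{\lambda D}\le e^{\lambda^2c^2/8}$. To prove it, bound $e^{\lambda x}$ on the interval by its chord, using convexity. Then write the logarithm of the result as a function $\psi(u)$ with $u=\lambda c$. One checks that $\psi(0)=\psi'(0)=0$ and $\psi''\le1/4$, since $\psi''$ is the variance of a two-point distribution on an interval of length one. Applying the lemma conditionally and peeling off one increment at a time with the tower property gives
\[
 \Ex\, e^{\lambda(Y-\Ex Y)}\le e^{m\lambda^2c^2/8}\qquad\text{for all }\lambda>0 .
\]

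\textbf{Conclusion.} By Markov's inequality,
\[
 \Prob(Y-\Ex Y\ge a)\le \exp\bigl(-\lambda a+m\lambda^2c^2/8\bigr).
\]
Taking $\lambda=4a/(mc^2)$ gives the bound $\exp(-2a^2/(mc^2))$. Applying the same argument to $-f$, which has the same bounded-differences constant $c$, handles the lower tail. A union bound over the two tails produces the factor $2$.

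The only delicate step is the interval-length claim for $D_i$. This is exactly where independence of the $X_j$ is used. Without it, the conditional law of the later variables could depend on $x_i$, and the difference of conditional expectations would no longer be bounded by $c$.
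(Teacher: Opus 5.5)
Your proof is correct. The paper gives no proof of this lemma and simply cites McDiarmid's 1989 survey; your Doob-martingale plus Hoeffding's-lemma argument is the standard proof found there. The one point you gloss over is the measurability of $L_i=\inf_x g_i(X_{<i},x)-Z_{i-1}$, and it is harmless: you can instead apply Hoeffding's lemma to the mean-zero variable $g_i(x_{<i},X_i)-g_{i-1}(x_{<i})$ for each fixed $x_{<i}$, and in the paper's application every $X_i$ takes only finitely many values.
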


The next lemma is the form of the thinning argument of
OpenAI~\cite{OpenAI2026} that we use at successive stages.

\Needspace{9\baselineskip}
\begin{lemma}\label{lem:eligible-thinning}
Let $\beta\ge32$ be a fixed absolute integer satisfying
\mbox{$2^{15}\cst{thm:knit}e^{-\beta/4}\le1$}.
After increasing $\tst{thm:local-list}$ if necessary, the following holds.
Let $t\ge \tst{thm:local-list}$ and $s\ge0$ be integers with
$r=t2^{1-s}\ge t/\ln t$ and $N=\ceil{\beta(s+1)r}$.
Suppose that $G$ is a $K_t$-minor-free graph with
$\mathsf v(G)\le n=t(\ln t)^5$ and $\chi_f(G)\le r$.
If $A_1,\ldots,A_N$ are fixed subsets of $V(G)$ such that every vertex
of $G$ belongs to at least $N/2$ of these sets, then there exist
independent sets $I_1,\ldots,I_N$ of $G$ such that
\[
 \chi_f\left(G\mathbin{\bigg\backslash}\bigcup_{i=1}^N(I_i\cap A_i)\right)\le \frac{r}{2}.
\]
\end{lemma}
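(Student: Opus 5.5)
The plan is to choose the sets $I_1,\ldots,I_N$ independently at random and show that the bad event (surviving fractional chromatic number above $r/2$) has probability less than one. Since $\chi_f(G)\le r$, the remark after Theorem~\ref{thm:local-fractional} gives a distribution on independent sets of $G$ in which every vertex lies in the sampled set with probability at least $1/r$. Draw $I_1,\ldots,I_N$ independently from this distribution, and let
\[
W=V(G)\setminus\bigcup_{i=1}^N(I_i\cap A_i)
\]
be the surviving set. A vertex $v$ lies in at least $N/2$ of the sets $A_i$, and survives only if it is avoided by each of the corresponding $I_i$. Hence
\[
\Prob(v\in W)\le(1-1/r)^{N/2}\le e^{-N/(2r)}\le e^{-\beta(s+1)/2}.
\]
So survival is exponentially unlikely in $\beta(s+1)$. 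This is where the hypothesis $2^{15}\cst{thm:knit}e^{-\beta/4}\le1$ will be used, in a form like $e^{-\beta(s+1)/2}\le 2^{-15}\cst{thm:knit}^{-1}e^{-\beta(s+1)/4}$.

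Next I reduce the bad event to a structural witness. Suppose $\chi_f(G[W])>r/2$. I apply Lemma~\ref{lem:fractional-extraction} with $k$ a small constant multiple of $r$, say $k=\lfloor r/16\rfloor$; this is large because $r\ge t/\ln t$. It yields an induced $k$-connected subgraph $J\subseteq G[W]$ with $\chi_f(J)\ge r/2-6k\ge r/8$.

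I then want to control $\alpha(J)$, so that Lemma~\ref{lem:local-cut-tests} applies with $s$-type parameter about $O(s+\ln t)$. My plan is to pass to a subgraph of $J$ of bounded independence number, using that $\chi_f$ is large relative to $\mathsf v(J)\le n$ and that $r=t2^{1-s}$. Any independent set of size larger than roughly $2^{s}(\ln t)^{5}$ can be peeled off while losing at most one from $\chi_f$ per peel, and only boundedly many peels are needed. I am unsure of this step and would revisit it first if it fails.

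The key step, which I expect to be the main obstacle, is the union bound. The witness $J$ depends on the randomness, so I cannot apply concentration to $J$ directly. Instead I would certify its existence by a deterministic family fixed in advance, namely the matchings of Lemma~\ref{lem:local-cut-tests} applied to suitable $k$-connected pieces of $G$ itself. There are at most $(n+1)^{O(s+\ln t)}=e^{O((s+\ln t)\ln t)}$ such matchings, each of order $\lfloor k/8\rfloor$. For a fixed matching $M$, the number $Y$ of its endpoints surviving in $W$ has expectation at most $(k/4)e^{-\beta(s+1)/2}$. Changing a single $I_i$ changes $Y$ by at most the number of endpoints of $M$ that $I_i$ can hit, and this is controlled since the $I_i$ are independent sets. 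Lemma~\ref{lem:bounded-differences} over the $N$ independent draws then bounds $\Prob(Y\ge k/32)$ by $\exp(-\Omega(k^2/(Nc^2)))$. With $k=\Theta(r)$ and $N=\Theta(\beta(s+1)r)$, the exponent is of order $r/(\beta(s+1))\ge t/(\beta(s+1)\ln t\,2^{s})$, and this should beat $(n+1)^{O(s+\ln t)}$ in the relevant range after enlarging $\tst{thm:local-list}$.

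Finally I combine the pieces. With positive probability, every fixed matching retains fewer than $k/32$ endpoints in $W$, while the degree condition of Lemma~\ref{lem:local-cut-tests} holds for the complementary set. On that event, any surviving high-$\chi_f$ piece would be $\lfloor k/32\rfloor$-connected in a way forced by the fixed certificates, that is, contained in the few vertices surviving through them. This contradicts $\chi_f(J)\ge r/8$ once we count how few vertices can survive. If the counting is too weak, the fallback is to use $K_t$-minor-freeness: via Lemma~\ref{thm:knit}, many disjoint highly connected surviving pieces could be linked into a $K_t$ model, which also explains the factor $\cst{thm:knit}$ in the hypothesis on $\beta$.
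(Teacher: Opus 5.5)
Your proposal has a genuine gap at its decisive step. The main line never uses that $G$ is $K_t$-minor-free, and the statement is false without that hypothesis. Take $s=0$, so $r=2t$ and $N=2\beta t$. Let $G$ be the complete $2t$-partite graph with parts of size about $(\ln t)^5/2$, and let each vertex lie in a random set of $N/2$ of the indices. Every independent set lies inside one part. So bringing $\chi_f$ (the number of nonempty parts) down to $t$ means emptying $t$ parts with $2\beta t$ indices. Some emptied part must then be covered by at most $2\beta$ of the sets $A_i$, and for large $t$ a random choice of the $A_i$ rules this out. Hence in general many vertices survive with large $\chi_f$, and your closing counting step cannot work. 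With $Z=W$, Lemma~\ref{lem:local-cut-tests} only certifies that the \emph{non-surviving} part $G\setminus W$ is highly connected. It gives no upper bound on $|W|$ or on $\chi_f(G[W])$.

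What you call a fallback is the actual argument, and its mechanism is what is missing. The paper first traps the random witness in a fixed piece. Set $k=32\cst{thm:knit}t(2^{s+4}-1)$ and split the graph deterministically by separators of order less than $k$ into at most $n+1$ pieces. Concentration of $|C\cap R|$ over the at most $n$ separators then yields a fixed $k$-connected $J$ with $\Ex[\chi_f(J[R])]>r/4$. Next it draws $q=\binom{2^{s+4}}{2}$ independent copies $R_1,\dots,R_q$ of the survivor set inside $J$. It passes to the exclusive parts $S_j=R_j\setminus\bigcup_{h\ne j}R_h$, which keep $63/64$ of the expected $\chi_f$ because survival probabilities are tiny. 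Concentration then gives, simultaneously, $\chi_f(J[S_j])>4\ceil{r/32}$ and hence $q$ disjoint $K_{2\ceil{r/32}}$ models by Theorem~\ref{thm:local-fractional}. Via Lemma~\ref{lem:local-cut-tests} with $Z=X=\bigcup_jR_j$, it also gives that $J\setminus X$ is $k/32$-connected. Knitting $J\setminus X$ with Lemma~\ref{thm:knit} assembles these local models, indexed by pairs of $2^{s+4}$ blocks of $[t]$, into a $K_t$ model. This is where $2^{s+4}$, $\cst{thm:knit}$ and the condition on $\beta$ enter.

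Your independence-number step also fails as stated. Peeling independent sets of order above $n/r$ from the random $J$ may take up to $r$ rounds, each costing up to one unit of $\chi_f$. That destroys your margin of $r/8$. Without a bound on $\alpha$, your bounded-differences claim is false as well. A single independent set can contain an entire side of a matching, so the Lipschitz constant can be of order $k$ and Lemma~\ref{lem:bounded-differences} gives nothing. The matching family of Lemma~\ref{lem:local-cut-tests} also has size up to $(n+1)^{\alpha}$.

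The paper peels \emph{before} the random step, at no cost in $\chi_f$. It greedily takes an independent set $I$ of order $\ceil{n/r}$ among undeleted vertices, together with an unused index $i$ such that $|I\cap A_i|\ge|I|/4$. It records $I_i=I$ as one of the output sets and deletes $I\cap A_i$. This uses at most $4r\le N/8$ indices. Every vertex stays in more than $N/4$ unused sets, so its survival probability is still at most $e^{-N/(4r)}$. The remaining graph $H$ satisfies $\alpha(H)<n/r$, which gives the Lipschitz constant $2n/r$ and the bound $|\mathcal M|\le\exp(4(\ln t)^7)$.
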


\begin{proof}
We repeatedly choose an independent set $I$ of $G$ of order $\ceil{n/r}$
consisting of undeleted vertices, together with an unused index $i$
satisfying $|I\cap A_i|\ge |I|/4$.
At each step, we record $I_i=I$ as one of the independent sets required
by the lemma and delete the vertices of $I\cap A_i$.
The process continues until no further deletion is possible.
Let $H$ be the induced subgraph of $G$ remaining at this point, and let
$K\subseteq[N]$ be the set of unused indices.
Each deletion removes at least $n/(4r)$ vertices, so the process uses
at most $4r\le N/8$ indices in total.
As every vertex of $G$ belongs to at least $N/2$ of the sets $A_i$,
at least $N/2-N/8=3N/8>N/4$ of these sets have indices in $K$.
Suppose that $H$ contains an independent set $I$ of order $\ceil{n/r}$.
Then $\sum_{i\in K}|I\cap A_i|\ge N|I|/4$.
Since $|K|\le N$, some $i\in K$ satisfies $|I\cap A_i|\ge |I|/4$.
This would allow another deletion, contrary to the termination of the
process. Hence $\alpha(H)\le\ceil{n/r}-1<n/r$.

Let $k=32\cst{thm:knit}t(2^{s+4}-1)$.

\begin{claim}\label{clm:thinning-families}
There are a family $\mathcal H$ of induced subgraphs of $H$ and a family
$\mathcal C$ of vertex subsets of $H$ such that
$H=\bigcup_{F\in\mathcal H}F$, $|\mathcal H|\le n+1$, and $|\mathcal C|\le n$.
Every $F\in\mathcal H$ has at most $2k$ vertices or is $k$-connected,
and $|C|<k$ for every $C\in\mathcal C$.
Moreover, every subgraph $J$ of $H$ such that $J\setminus C$ is
connected for every $C\in\mathcal C$ is contained in some $F\in\mathcal H$.
\end{claim}

\begin{proof}
Let $\mathcal H_0=\{H\}$ and $\mathcal C_0=\varnothing$.
For $i=1,2,\ldots$, if $\mathcal H_{i-1}$ contains a subgraph of
order greater than $2k$ that is not $k$-connected, choose one such
subgraph $F$.
For this subgraph $F$, let $C$ be a vertex separator with $|C|<k$,
and let $Y_1,Y_2$ be nonempty sets partitioning $V(F)\setminus C$,
with no edges between them.
Let $F_1=F\setminus Y_2$ and $F_2=F\setminus Y_1$.
The family $\mathcal H_i$ is obtained from $\mathcal H_{i-1}$ by
replacing $F$ with $F_1,F_2$, and $\mathcal C_i$ is obtained from
$\mathcal C_{i-1}$ by adding $C$.
The process continues until no such subgraph $F$ exists.

We now show that at most $n$ replacements can occur.
For $i\ge0$, let
\[
 \tau(\mathcal H_i)=\sum_{J\in\mathcal H_i}\max\{\mathsf v(J)-k,0\}.
\]
Then $\tau(\mathcal H_0)\le n$.
Suppose that $\mathcal H_i$ is obtained from $\mathcal H_{i-1}$ by
replacing $F$ with $F_1,F_2$, labeled so that
$\mathsf v(F_1)\le\mathsf v(F_2)$.
Since $\mathsf v(F_1)+\mathsf v(F_2)=\mathsf v(F)+|C|$, we have
$\mathsf v(F_2)\ge\mathsf v(F)/2>k$.
It follows that
\[
 \tau(\mathcal H_{i-1})-\tau(\mathcal H_i)
 \ge\mathsf v(F_1)-|C|-\max\{\mathsf v(F_1)-k,0\}
 =\min\{\mathsf v(F_1),k\}-|C|\ge1.
\]
The last inequality uses $\mathsf v(F_1)>|C|$ and $k>|C|$.
Since $\tau(\mathcal H_i)$ is nonnegative, the process terminates
after at most $n$ replacements.

Suppose that the process terminates after $\ell$ replacements, and let
$\mathcal H=\mathcal H_\ell$ and $\mathcal C=\mathcal C_\ell$.
Each replacement records at most one separator and increases the number
of subgraphs by at most one, so $|\mathcal C|\le n$ and
$|\mathcal H|\le n+1$.
Each replacement preserves the union of the subgraphs, so
$H=\bigcup_{F\in\mathcal H}F$.
By the stopping condition, every $F\in \mathcal H$ has at most $2k$
vertices or is $k$-connected.

Suppose that $J$ is a subgraph of $H$ such that $J\setminus C$ is
connected for every $C\in\mathcal C$. To show that there exists some graph in $\mathcal H$ containing $J$,
it suffices to show that some member of $\mathcal H_i$ contains $J$
for every $i=0,\ldots,\ell$. We prove this by induction on $i$.
The case $i=0$ follows from $\mathcal H_0=\{H\}$.
Suppose that $F\in\mathcal H_{i-1}$ contains $J$.
If $F\in\mathcal H_i$, we are done.
Otherwise, $\mathcal H_i$ contains $F_1=F\setminus Y_2$ and
$F_2=F\setminus Y_1$, where $C,Y_1,Y_2$ are the sets chosen at step $i$.
The vertices of $J\setminus C$ lie entirely in one of $Y_1,Y_2$,
since $J\setminus C$ is connected and there are no edges between these sets.
Since $V(F_1)\cap V(F_2)=C$, one of $F_1,F_2$ contains $J$, as desired.
\end{proof}

Let $\mathcal H,\mathcal C$ be the families from
Claim~\ref{clm:thinning-families}.
Since $\chi_f(H)\le r$, let $w(I)$ be fractional coloring weights
on $H$ of total weight $r$, adding any missing weight to the empty set.
For each $i\in K$, independently choose an independent set $I_i$ of $H$
with $\Prob(I_i=I)=w(I)/r$.
Let $R=V(H)\setminus\bigcup_{i\in K}(I_i\cap A_i)$.
For every $i\in K$ and $v\in V(H)\cap A_i$,
$\Prob(v\in I_i)=\sum_{I\ni v}w(I)/r\ge1/r$.
As the greedy deletion process used at most $N/8$ indices,
each vertex of $H$ still belongs to at least $N/2-N/8=3N/8>N/4$
of the sets $A_i$ with indices in $K$.
By the definition of $R$, $v\in R$ if and only if $v\notin I_i$
for every $i\in K$ with $v\in A_i$.
There are more than $N/4$ such indices, and for each of them
$\Prob(v\notin I_i)\le1-1/r$.
Thus,
\begin{equation}\label{eq:uncovered-probability}
 \Prob(v\in R)
 =\prod_{i\in K,\,A_i\ni v}\Prob(v\notin I_i)
 \le\left(1-\frac1r\right)^{N/4}\le e^{-\frac{N}{4r}}.
\end{equation}

For $i\in[N]\setminus K$, let $I_i$ be the independent set recorded
for index $i$ during the greedy deletions.
Then $G\setminus\bigcup_{i=1}^N(I_i\cap A_i)=H[R]$.
If the independent sets $\{I_i\}_{i\in K}$ can be chosen so that
$\chi_f(H[R])\le r/2$, then the lemma holds.
We may therefore assume that $\chi_f(H[R])>r/2$ for every choice of
$\{I_i\}_{i\in K}$.

\begin{claim}\label{clm:thinning-core}
There is a fixed $k$-connected induced subgraph $J$ of $H$ such that
$\Ex[\chi_f(J[R])]>r/4$.
\end{claim}
\begin{proof}
Let $\mathcal X=\mathcal X_1\cup\mathcal X_2\cup\mathcal X_3$, where
the three families of random variables are defined by
\[
 \mathcal X_1=\{|C\cap R|\}_{C\in\mathcal C},\quad
 \mathcal X_2=\{|V(F)\cap R|\}_{F\in\mathcal H,\,\mathsf v(F)\le2k},\quad
 \mathcal X_3=\{\chi_f(F[R])\}_{F\in\mathcal H,\,\mathsf v(F)>2k}.
\]
Since $N\ge\beta(s+1)r$ and $4e^{-\beta/4}\le4e^{-8}<1$,
by the choice of $\beta$ we have
\begin{equation}\label{eq:uncovered-expectation}
 2ke^{-\frac{N}{4r}}
 <2^{2s+9}\cst{thm:knit}r e^{-\frac{\beta(s+1)}4}
 =2^9\cst{thm:knit}r e^{-\frac\beta4}
   \bigl(4e^{-\frac\beta4}\bigr)^s
 \le2^9\cst{thm:knit}r e^{-\frac\beta4}\le r/64.
\end{equation}
For every $X\in\mathcal X_1\cup\mathcal X_2$,
by Inequalities~\eqref{eq:uncovered-probability} and
\eqref{eq:uncovered-expectation} and linearity of expectation,
$\Ex[X]\le2ke^{-N/(4r)}\le r/64$.

Every $X\in\mathcal X$ depends on the same $|K|\le N$ independent
choices and changes by at most $2n/r$ when one $I_i$ is replaced.
Indeed, such a replacement may add a subset of the original independent
set to $R$ and remove a subset of the new independent set from $R$.
Both subsets are independent and have order less than $n/r$,
so every $X\in\mathcal X_1\cup\mathcal X_2$ changes by at most $2n/r$.
Deleting vertices cannot increase $\chi_f$, while adding an independent
set increases $\chi_f$ by at most one.
Comparing in both directions, every $X\in\mathcal X_3$ changes by at
most one, which is at most $2n/r$ for sufficiently large $t$.

Since $s+1\le2^s$ and both $\beta$ and $t$ are integers, we have
$N\le2\beta t$.
By Lemma~\ref{lem:bounded-differences}, using
$r\ge t/\ln t$ and $n=t(\ln t)^5$, for every $X\in\mathcal X$ we have
\[
 \Prob\left(|X-\Ex[X]|\ge\frac r{64}\right)
 \le2\exp\left(-\frac{r^4}{2^{13}Nn^2}\right)
 \le2\exp\left(-\frac{t}{2^{14}\beta(\ln t)^{14}}\right)
 \le2e^{-\sqrt t}.
\]
By Claim~\ref{clm:thinning-families}, $|\mathcal X|\le2n+1$.
By the union bound, we have
\[
 \Prob\left(\max_{X\in\mathcal X}|X-\Ex[X]|\ge\frac r{64}\right)
 \le2(2n+1)e^{-\sqrt t}<1.
\]
Thus the independent sets $\{I_i\}_{i\in K}$ can be chosen so that
$|X-\Ex[X]|<r/64$ for every $X\in\mathcal X$.
Together with Inequality~\eqref{eq:uncovered-expectation}, it follows
that this choice satisfies
\begin{equation}\label{eq:core-mean-comparison}
 \begin{gathered}
  |C\cap R|<\frac r{32}\quad(C\in\mathcal C),\qquad
  |V(F)\cap R|<\frac r{32}\quad(F\in\mathcal H,\ \mathsf v(F)\le2k),\\
  \Ex[\chi_f(F[R])]>\chi_f(F[R])-\frac r{64}
  \quad(F\in\mathcal H,\ \mathsf v(F)>2k).
 \end{gathered}
\end{equation}

By our assumption, $\chi_f(H[R])>r/2$.
For large $t$, $r>128$, so
$7\ceil{r/32}<7r/16<r/2$.
By Lemma~\ref{lem:fractional-extraction}, $H[R]$ contains a
$\ceil{r/32}$-connected subgraph $F$ with
\[
 \chi_f(F)\ge\chi_f(H[R])-6\ceil{r/32}
 >\frac r2-6\left(\frac r{32}+1\right)
 =\frac{5r}{16}-6>\frac r4+\frac r{64}.
\]
Since $V(F)\subseteq R$, by Inequalities~\eqref{eq:core-mean-comparison}
we have $|C\cap V(F)|\le|C\cap R|<r/32$ for every $C\in\mathcal C$.
The graph $F$ is $\ceil{r/32}$-connected, so $F\setminus C$ is
connected for every $C\in\mathcal C$.
By Claim~\ref{clm:thinning-families}, some $J\in\mathcal H$ contains $F$.
If $\mathsf v(J)\le2k$, then by Inequalities~\eqref{eq:core-mean-comparison},
$\chi_f(F)\le\mathsf v(F)\le|V(J)\cap R|<r/32$, a contradiction.
Thus $\mathsf v(J)>2k$, and $J$ is $k$-connected by
Claim~\ref{clm:thinning-families}.
Moreover, by Inequalities~\eqref{eq:core-mean-comparison}, we have
$\Ex[\chi_f(J[R])]>\chi_f(J[R])-\frac r{64}
\ge\chi_f(F)-\frac r{64}>\frac r4$.
\end{proof}

Let $J$ be the fixed graph from Claim~\ref{clm:thinning-core}, and let
$p=\ceil{r/32}$ and $q=\binom{2^{s+4}}2$.
Recall that $k=32\cst{thm:knit}t(2^{s+4}-1)$.
By the definitions of $r$ and $q$, we have $k=2\cst{thm:knit}rq$.

\begin{claim}\label{clm:thinning-models}
There is a set $X\subseteq V(J)$ such that $J[X]$ contains $q$
pairwise disjoint $K_{2p}$ models, every vertex of $J$ has at least
$k/2$ neighbors in $V(J)\setminus X$, and $J\setminus X$ is
$k/32$-connected.
\end{claim}
\begin{proof}
For each $i\in K$ and $j\in[q]$, independently choose an independent
set $I_i^j$ of $H$, with $\Prob(I_i^j=I)=w(I)/r$
for every independent set $I$ of $H$.
All these choices are independent of the sets $\{I_i\}_{i\in K}$.
For $j\in[q]$, let
\[
 R_j=V(J)\setminus\bigcup_{i\in K}(I_i^j\cap A_i),\qquad
 S_j=R_j\setminus\bigcup_{h\ne j}R_h,\qquad
 \text{and}\quad X=\bigcup_{j=1}^qR_j.
\]
The sets $S_1,\ldots,S_q$ are pairwise disjoint and contained in $X$.
Each $R_j$ has the same distribution as $R\cap V(J)$, so by
Claim~\ref{clm:thinning-core}, $\Ex[\chi_f(J[R_j])]>r/4$.

By Inequalities~\eqref{eq:uncovered-probability} and
\eqref{eq:uncovered-expectation} and the union bound,
using $k=2\cst{thm:knit}rq$ and $\cst{thm:knit}\ge1$,
for every $v\in V(J)$ we have
\begin{equation}\label{eq:uncovered-union-probability}
 \Prob(v\in X)
 \le\sum_{h=1}^q\Prob(v\in R_h)
 \le qe^{-\frac{N}{4r}}
 \le\frac{qr}{2^7k}=\frac1{2^8\cst{thm:knit}}\le\frac1{64}.
\end{equation}
Since $R_h$ is independent of $R_j$ for $h\ne j$,
by the definition of $S_j$ and Inequality~\eqref{eq:uncovered-union-probability},
for every $j\in[q]$ and $v\in R_j$ we have
\[
 \Prob(v\in S_j\mid R_j)
 =1-\Prob\left(v\in\bigcup_{h\ne j}R_h\mid R_j\right)
 \ge1-\sum_{h\ne j}\Prob(v\in R_h)
 \ge1-qe^{-\frac{N}{4r}}\ge\frac{63}{64}.
\]
For each $j\in[q]$, conditional on $R_j$, each vertex $v\in R_j$
has a nonnegative weight $x_v$ such that
$\sum_{v\in R_j}x_v=\chi_f(J[R_j])$ and
$\sum_{v\in I}x_v\le1$ for every independent set $I$ of $J[R_j]$.
Such weights exist by the dual characterization of $\chi_f$.
Since every independent set of $J[S_j]$ is also independent in $J[R_j]$,
by the dual characterization we have
$\chi_f(J[S_j])\ge\sum_{v\in S_j}x_v$.
Thus, by linearity of expectation, we have
\[
 \Ex[\chi_f(J[S_j])\mid R_j]
 \ge\sum_{v\in R_j}x_v\Prob(v\in S_j\mid R_j)
 \ge\frac{63}{64}\chi_f(J[R_j]).
\]
Since $R_j$ has the same distribution as $R\cap V(J)$ and
$\Ex[\chi_f(J[R])]\le r$, by the law of total expectation we have
\begin{equation}\label{eq:disjoint-mean}
 \Ex[\chi_f(J[S_j])]
 \ge\frac{63}{64}\Ex[\chi_f(J[R])]
 \ge\Ex[\chi_f(J[R])]-\frac r{64}>\frac r4-\frac r{64}
 =\frac{15r}{64}.
\end{equation}

Since $\alpha(J)<n/r\le(\ln t)^6$ and $\mathsf v(J)\le n = t(\ln t)^5$,
there is a fixed family $\mathcal M$ of matchings in $J$ as in
Lemma~\ref{lem:local-cut-tests}, each of order $\lfloor k/8\rfloor$, such that
\[
 |\mathcal M|\le(\mathsf v(J)+1)^{\alpha(J)}
 \le\exp\bigl((\ln t)^6\ln(n+1)\bigr)
 \le\exp(4(\ln t)^7).
\]
The last inequality holds for sufficiently large $t$, since
 $\ln(n+1)\le\ln2+\ln t+5\ln\ln t\le4\ln t$.

Let $\mathcal Y=\mathcal Y_1\cup\mathcal Y_2\cup\mathcal Y_3$, where
the three families of random variables are defined by
\[
 \mathcal Y_1=\{|N_J(v)\cap X|\}_{v\in V(J)},\quad
 \mathcal Y_2=\{|V(M)\cap X|\}_{M\in\mathcal M},\quad
 \mathcal Y_3=\{\chi_f(J[S_j])\}_{j\in[q]}.
\]
Every $Y\in\mathcal Y$ depends on the same $q|K|\le qN$ independent
choices. As in the proof of Claim~\ref{clm:thinning-core}, replacing
one $I_i^h$ changes every $Y\in\mathcal Y$ by at most $2n/r$,
since the vertex sets added to or removed from $X$ and each $S_j$
are independent and have order less than $n/r$.
Since $2^{s+4}=32t/r\le32\ln t$, we have $q<512(\ln t)^2$.
Together with $N\le2\beta t$, this gives $qN\le t(\ln t)^3$
for sufficiently large $t$, uniformly over the allowed values of $s$.
By Lemma~\ref{lem:bounded-differences}, for every $Y\in\mathcal Y$ we have
\begin{equation}\label{eq:local-concentration}
 \Prob\left(|Y-\Ex[Y]|\ge\frac r{64}\right)
 \le2\exp\left(-\frac{r^4}{2^{13}qNn^2}\right)
 \le2\exp\left(-\frac{t}{2^{13}(\ln t)^{17}}\right)
 \le2e^{-\sqrt t}.
\end{equation}
Since $|\mathcal Y|\le n+\exp(4(\ln t)^7)+q$, by
Inequality~\eqref{eq:local-concentration} and the union bound, we have
\[
 \Prob\left(\max_{Y\in\mathcal Y}|Y-\Ex[Y]|\ge\frac r{64}\right)
 \le2\bigl(n+\exp(4(\ln t)^7)+q\bigr)e^{-\sqrt t}<1.
\]
\Needspace{5\baselineskip}
Thus the independent sets $\{I_i^j\}_{i\in K,\,j\in[q]}$ can be chosen
so that $|Y-\Ex[Y]|<r/64$ for every $Y\in\mathcal Y$.
For this choice, we have the following bounds.
\begin{itemize}
 \item Since $J$ is $k$-connected, $|N_J(v)|\ge k\ge r$ for every
 $v\in V(J)$. By Inequality~\eqref{eq:uncovered-union-probability} and
 linearity of expectation, $\Ex[|N_J(v)\cap X|]\le|N_J(v)|/64$,
 so $|N_J(v)\cap X|<\frac{|N_J(v)|+r}{64}\le\frac{|N_J(v)|}{32}$.
 \item For every $M\in\mathcal M$, we have
 $|V(M)|=2\lfloor k/8\rfloor\le k/4$.
 Similarly, $\Ex[|V(M)\cap X|]=\sum_{v\in V(M)}\Prob(v\in X)
 \le\frac{|V(M)|}{64}\le\frac{k}{256}\le\frac{k}{64}$,
 so $|V(M)\cap X|<\frac{k+r}{64}\le\frac k{32}$.
 \item By Inequality~\eqref{eq:disjoint-mean}, for every $j\in[q]$ we have
 $\chi_f(J[S_j])>\frac{15r}{64}-\frac r{64}=\frac{7r}{32}>4p$ for large $t$.
\end{itemize}

By the first two bounds and Lemma~\ref{lem:local-cut-tests},
with $X$ playing the role of $Z$, $J\setminus X$ is $k/32$-connected.
Indeed, $|N_J(v)\setminus X|>\frac{31}{32}|N_J(v)|\ge\frac{31k}{32}>\frac k2$
for every $v\in V(J)$.
By the third bound and Theorem~\ref{thm:local-fractional},
each $J[S_j]$ contains a $K_{2p}$ model.
Since $S_1,\ldots,S_q$ are pairwise disjoint subsets of $X$,
these $q$ models are pairwise disjoint and lie in $J[X]$.
\end{proof}

Let $X$ and the $q$ models be as in Claim~\ref{clm:thinning-models}.
Since $2^{s+4}=32t/r\le32\ln t<t$ and $p=\ceil{t/2^{s+4}}$, we
partition $[t]$ into nonempty blocks $T_1,\ldots,T_{2^{s+4}}$,
each of order at most $p$.
There are exactly $q=\binom{2^{s+4}}2$ unordered pairs of distinct blocks,
and we associate the $q$ models bijectively with these pairs.
For each pair $\{T_i,T_j\}$ with $i<j$ and each $\ell\in T_i\cup T_j$,
we select a branch set $B_\ell^{(i,j)}$ of the associated model,
with the $|T_i\cup T_j|\le2p$ selected branch sets all distinct.
There are $t(2^{s+4}-1)=rq/16$ selected branch sets in total.

Since every vertex of $J$ has at least $k/2>rq/16$ neighbors in
$J\setminus X$, we can choose distinct vertices
$y_\ell^{(i,j)}\in V(J)\setminus X$, each with a neighbor in $B_\ell^{(i,j)}$.
By Claim~\ref{clm:thinning-models}, $J\setminus X$ is $k/32$-connected.
Since $k/32=\cst{thm:knit}rq/16$, by Lemma~\ref{thm:knit},
$J\setminus X$ is $(rq/16)$-knit and contains pairwise disjoint
connected subgraphs $D_1,\ldots,D_t$ such that $D_\ell$ contains
$y_\ell^{(i,j)}$ for every pair $\{T_i,T_j\}$ with $\ell\in T_i\cup T_j$.

For $\ell\in[t]$, let
\[
 F_\ell=J\left[V(D_\ell)\cup\bigcup_{i<j,\,\ell\in T_i\cup T_j}V(B_\ell^{(i,j)})\right].
\]
Every $B_\ell^{(i,j)}$ has an edge to $y_\ell^{(i,j)}\in V(D_\ell)$,
so $F_\ell$ is connected.
The subgraphs $F_1,\ldots,F_t$ are pairwise disjoint, since the
$B_\ell^{(i,j)}$ are pairwise disjoint within $X$ and the $D_\ell$
are pairwise disjoint outside $X$.
For any distinct labels $\ell,\ell'\in[t]$, there are $i<j$ such that
$\ell,\ell'\in T_i\cup T_j$.
The branch sets $B_\ell^{(i,j)}$ and $B_{\ell'}^{(i,j)}$ are adjacent
in the model associated with $\{T_i,T_j\}$, so $F_\ell$ and
$F_{\ell'}$ are adjacent.
Thus $F_1,\ldots,F_t$ form a $K_t$ model in $G$, a contradiction.

Hence the independent sets $\{I_i\}_{i\in K}$ can be chosen so that
$\chi_f(H[R])\le r/2$.
Together with the independent sets recorded during the greedy deletions,
they satisfy the required bound.
\end{proof}

\Needspace{5\baselineskip}
\begin{proof}[Proof of Lemma~\ref{thm:local-list}]
Let $\beta$ be as in Lemma~\ref{lem:eligible-thinning}.
Choose an integer $C>2(8\beta+6)(1+\ln2)$.
For sufficiently large $t$, suppose that $G$ is $K_t$-minor-free
with $\mathsf v(G)\le t(\ln t)^5$ and $L$ is a
$Ct$-list assignment on $G$.

Let $r_j=t2^{1-j}$ for $j\ge0$, and $\ell$ be the least integer
such that $r_\ell<t/\ln t$.
Let $N_j=\ceil{\beta(j+1)r_j}$ for $j<\ell$ and $N_\ell=4t$.

We randomly partition the set $\bigcup_{v\in V(G)}L(v)$ of all
colors appearing in the lists into pairwise disjoint sets $C_i^j$,
where $j\in\{0,1,\ldots,\ell\}$ and $i\in[N_j]$.
For the total number of sets $C_i^j$, we have
\[
 \sum_{j=0}^{\ell}N_j
 \le2\beta t\sum_{j\ge0}(j+1)2^{-j}+\ell+4t
 \le(8\beta+6)t,
\]
since $\ell\le2\ln t\le2t$.
The random partition is obtained by assigning each color
independently and uniformly to one of the sets $C_i^j$.
For fixed $v\in V(G)$ and $j\in\{0,\ldots,\ell\}$, if $L(v)$
intersects fewer than $N_j/2$ of the sets $C_i^j$, then it is
disjoint from $C_i^j$ for some $\ceil{N_j/2}$ indices $i$.
There are $\binom{N_j}{\ceil{N_j/2}}\le2^{N_j}$ choices of these
indices, since $[N_j]$ has $2^{N_j}$ subsets.
Since the colors in $L(v)$ are assigned independently and
$|L(v)|=Ct$, by the union bound over these choices,
the probability that $L(v)$ intersects fewer than $N_j/2$ sets is at most
\[
 2^{N_j}\left(1-\frac{\ceil{N_j/2}}{\sum_{h=0}^{\ell}N_h}\right)^{Ct}
 \le\exp\left(N_j\ln2-\frac{CN_j}{2(8\beta+6)}\right)
 \le e^{-N_j}.
\]
Here we use $1-x\le e^{-x}$ and the bounds on the number of sets and $C$.

For $j<\ell$, the minimality of $\ell$ implies $r_j\ge t/\ln t$.
Since $\beta\ge32$, we have
$N_j=\ceil{\beta(j+1)r_j}\ge r_j\ge t/\ln t$.
Also, $N_\ell=4t\ge t/\ln t$ for sufficiently large $t$.
Thus $N_j\ge t/\ln t$ for every $j=0,\ldots,\ell$.
By the union bound over all $v\in V(G)$ and $j\in\{0,\ldots,\ell\}$,
the probability that any list fails the required intersection
condition is at most
$(\ell+1)|V(G)|e^{-t/\ln t}\le(\ell+1)t(\ln t)^5e^{-t/\ln t}<1$
for sufficiently large $t$.
Thus the sets $C_i^j$ can be chosen so that every list intersects
at least $N_j/2$ of $C_1^j,\ldots,C_{N_j}^j$ for every $j$.

Let $H_0=G$. By Theorem~\ref{thm:local-fractional},
$\chi_f(H_0)<r_0$.
For $j=0,\ldots,\ell-1$, suppose that $\chi_f(H_j)\le r_j$.
For each $i\in[N_j]$, let
$A_i=\{v\in V(H_j):L(v)\cap C_i^j\ne\varnothing\}$.
Every vertex belongs to at least $N_j/2$ of these sets.
Since $r_j\ge t/\ln t$, by Lemma~\ref{lem:eligible-thinning}, there
are independent sets $I_1^j,\ldots,I_{N_j}^j$ of $H_j$ such that
\[
 H_{j+1}=H_j\setminus\bigcup_{i=1}^{N_j}(I_i^j\cap A_i),
 \qquad
 \chi_f(H_{j+1})\le\frac{r_j}{2}=r_{j+1}.
\]
Each deleted vertex $v$ is colored with a color in $L(v)\cap C_i^j$
for an index $i$ with $v\in I_i^j\cap A_i$.
Consequently, the graph $H_\ell$ of uncolored vertices satisfies
$\chi_f(H_\ell)\le r_\ell<t/\ln t$.

There is therefore a distribution on the independent sets of $H_\ell$
in which each vertex is included with probability at least $\ln t/t$.
For each $i\in[4t]$, independently choose an independent set $I_i^\ell$
from this distribution.
A vertex $v\in V(H_\ell)$ is \emph{covered} if $v\in I_i^\ell$
for some index $i$ with $L(v)\cap C_i^\ell\ne\varnothing$.
Each list intersects at least $2t$ of $C_1^\ell,\ldots,C_{4t}^\ell$,
so for every $v\in V(H_\ell)$ we have
$\Prob(v\text{ is uncovered})\le(1-\ln t/t)^{2t}\le t^{-2}$.
By the union bound, the probability that some vertex is uncovered
is at most $\mathsf v(H_\ell)t^{-2}\le(\ln t)^5/t<1$.
Thus the independent sets $I_i^\ell$ can be chosen so that every
vertex of $H_\ell$ is covered.
We color each vertex $v\in V(H_\ell)$ with a color in
$L(v)\cap C_i^\ell$ for an index $i$ with $v\in I_i^\ell$
and $L(v)\cap C_i^\ell\ne\varnothing$.

The sets $C_i^j$ are pairwise disjoint, and all vertices colored
from a fixed $C_i^j$ lie in the independent set $I_i^j$.
Thus the resulting coloring is a proper $L$-coloring of $G$.
Choose $\tst{thm:local-list}\ge3$ so that Lemma~\ref{lem:eligible-thinning} and
all preceding estimates hold for every $t\ge \tst{thm:local-list}$.
\end{proof}

\subsection{Bad list assignments and connectivity}\label{subsec:bad-lists-connectivity}

If $L$ is bad on $G$, every sublist of $L$ on $G$ is also bad.
A bad list assignment on a subgraph remains bad on the induced subgraph
with the same vertex set.
If $X\subseteq V(G)$ has $|X|\le|L|$, color $G[X]$ with distinct colors
from its lists. Deleting $X$ and removing the colors used on neighbors
in $X$ leaves a bad sublist on $G\setminus X$ of size at least $|L|-|X|$;
otherwise a coloring would extend to $G$.

\begin{observation}\label{obs:bad-density}
If $L$ is a bad list assignment on $G$, then $G$ has a nonempty
induced subgraph $H$ that is not $L$-colorable and satisfies
$\delta(H)\ge|L|$ and $\mathsf{d}(H)\ge|L|/2$.
\end{observation}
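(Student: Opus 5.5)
We take a minimal counterexample with respect to vertex deletion. Since $L$ is bad on $G$, we choose a subgraph of $G$ on which $L$ is bad and which has as few vertices as possible. It is nonempty, since the empty graph is trivially $L$-colorable, and we may take it to be induced: badness of $L$ on a subgraph carries over to the induced subgraph on the same vertex set, as noted in the paragraph before the statement. Call this minimal induced subgraph $H$. By minimality, $H\setminus\{v\}$ is $L$-colorable for every $v\in V(H)$.

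Next we show $\delta(H)\ge|L|$. Suppose some $v\in V(H)$ had $\deg_H(v)<|L|$. Take an $L$-coloring of $H\setminus\{v\}$. Its neighbors of $v$ use at most $\deg_H(v)<|L|\le|L(v)|$ colors, so some color of $L(v)$ is still free. Assigning that color to $v$ gives an $L$-coloring of $H$, which is a contradiction.

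The density bound then follows from the degree bound:
\[
\mathsf d(H)=\frac{\mathsf e(H)}{\mathsf v(H)}=\frac{1}{2\mathsf v(H)}\sum_{v\in V(H)}\deg_H(v)\ge\frac{\delta(H)}{2}\ge\frac{|L|}{2}.
\]

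The argument has no real obstacle. The only point needing care is that $|L|$ stays the quantity defined on $G$, and the list sizes on $H$ are at least this minimum. It is also worth noting that $H$ is nonempty and has $\delta(H)\ge|L|$, so if $|L|\ge1$ then $H$ automatically has at least $|L|+1$ vertices.
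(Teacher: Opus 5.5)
Your proposal is correct and follows the paper's own proof: take a vertex-minimal induced subgraph on which $L$ is bad, note that an $L$-coloring of $H\setminus\{v\}$ would extend to any $v$ with $d_H(v)<|L(v)|$, and conclude $\delta(H)\ge|L|$ and $\mathsf d(H)\ge\delta(H)/2\ge|L|/2$. Your remarks on nonemptiness and on $|L(v)|\ge|L|$ for $v\in V(H)$ are the only details needed, and you handle them properly.
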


\begin{proof}
Choose an induced subgraph $H$ vertex-minimal subject to not being $L$-colorable.
For each $v\in V(H)$, an $L$-coloring of $H\setminus\{v\}$ would extend
to $v$ if $d_H(v)<|L(v)|$. Thus $\delta(H)\ge|L|$, and
$\mathsf d(H)\ge\delta(H)/2\ge|L|/2$.
\end{proof}

The following consequence of the unbalanced bipartite density bound
of Norin and Postle~\cite{NorinPostle2023} (Theorem~3.2 in their paper)
controls vertices with many neighbors in a fixed set.

\begin{lemma}[\cite{NorinPostle2023}]\label{lem:bipartite}
There is an absolute constant $C=\cst{lem:bipartite}\ge1$ such that,
for every integer $t\ge3$, if a $K_t$-minor-free bipartite graph has
parts $A,B$ and every vertex of $B$ has at least $2t$ neighbors in $A$,
then $|B|\le C(\ln t)|A|$.
\end{lemma}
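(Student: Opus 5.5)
The plan is to treat this as a bookkeeping reduction to the unbalanced bipartite density theorem of Norin and Postle~\cite{NorinPostle2023} (Theorem~3.2 there), rather than to prove a new density bound. First I will normalize the graph. For every $b\in B$, delete all but exactly $2t$ of its edges into $A$, and discard any vertices of $A$ that become isolated. This yields a bipartite subgraph $G'$ with the following properties:
\begin{itemize}
\item $G'$ is still $K_t$-minor-free, since it is a subgraph of the original graph;
\item every vertex of $B$ has degree exactly $2t$ in $G'$;
\item the part on the $A$ side has order at most $|A|$.
\end{itemize}
In particular $\mathsf e(G')=2t|B|$. Keeping the degrees on the $B$ side uniform is the point of this step, because the edge count then becomes a linear function of $|B|$.

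Next I will apply Theorem~3.2 of~\cite{NorinPostle2023} to $G'$. As I recall, it is an edge bound for $K_t$-minor-free bipartite graphs that is linear in the larger side with coefficient close to $t$, plus a term of order $t\ln t$ times the smaller side. Concretely, I aim for an inequality of the form
\[
 \mathsf e(G')\le t|B|+C' t(\ln t)|A|
\]
for an absolute constant $C'$, where the exact leading coefficient on $|B|$ only needs to be strictly below $2t$. Combining this with $\mathsf e(G')=2t|B|$ gives $t|B|\le C't(\ln t)|A|$. Hence $|B|\le C'(\ln t)|A|$, and I will take $C=\max\{C',1\}$.

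The main obstacle is matching the precise form of Theorem~3.2 in~\cite{NorinPostle2023} to this hypothesis. That theorem may be stated with a degree threshold on $B$ different from $2t$, or with an error term of a different shape, such as a $\sqrt{\ln t}$ factor or a dependence on the ratio $|B|/|A|$. In that case I would first check whether $2t$ exceeds the stated threshold, possibly after enlarging $C$ and using $t\ge3$ to absorb constants.

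If the theorem is stated only as a density bound for minors, my fallback is a self-contained argument. Contract each $b\in B$ into one of its neighbors, chosen at random. Each $b$ then contributes $2t-1$ edges among $A$, and a positive fraction of these edges remain pairwise distinct unless they concentrate on few pairs. That concentration case itself yields many vertices of $B$ sharing common neighborhoods, and from those a $K_t$ minor can be built directly. I expect this fallback case analysis to be the genuinely delicate part, which is why I would rely on the cited theorem.
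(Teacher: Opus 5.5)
Your approach is the paper's. The paper states this lemma without proof as a consequence of Norin and Postle's unbalanced bipartite density bound (their Theorem~3.2), and any edge bound of the shape $\mathsf{e}(G)\le ct|B|+O(t\ln t)|A|$ with an absolute $c<2$, combined with $\mathsf{e}(G)\ge 2t|B|$, gives the claim at once, so your normalization to degree exactly $2t$ is harmless but unnecessary. Note that ``strictly below $2t$'' is not enough for the leading coefficient, since a bound with $(2t-1)|B|$ would only yield $|B|=O(t\ln t)|A|$. The random-contraction fallback is a heuristic rather than an argument, so drop it.
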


For an integer $d\ge0$, a graph is \emph{$d$-degenerate} if every
nonempty subgraph has a vertex of degree at most $d$.
Such a graph is $(d+1)$-choosable by greedy coloring.

The next lemma follows the proof of Lemma~4.11 in~\cite{Postle2020},
using Lemma~\ref{thm:local-list} to color the small set.
Here $\tst{thm:local-list}$ is the threshold from Lemma~\ref{thm:local-list},
which may be increased when further conditions are needed.

\begin{lemma}\label{lem:deletion}
There is an absolute constant $C=\cst{lem:deletion}$ with the following property.
Let $t\ge \tst{thm:local-list}$ and $d\ge0$ be integers, and let $G$ be a $K_t$-minor-free graph.
Let $X,Y\subseteq V(G)$ be disjoint with $|X|\le t(\ln t)^5$.
Suppose that $G[Y]$ is $d$-degenerate and every vertex outside
$X\cup Y$ has at most $d$ neighbors in $Y$.
If $L$ is a bad list assignment on $G$ with
$|L|\ge C(t+d+1)$, then $G\setminus(X\cup Y)$
has a bad sublist $L'\subseteq L$ satisfying
$|L'|\ge|L|-C(t+d+1)$.
\end{lemma}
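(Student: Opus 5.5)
The plan is to argue by contraposition: if every sublist $L'$ of $L$ on $G\setminus(X\cup Y)$ with $|L'|\ge|L|-C(t+d+1)$ admitted a coloring, then $G$ would be $L$-colorable. The idea is to reserve colors for $X\cup Y$ and colour it first, then pass to the rest of the graph with reduced lists. The difficulty is that vertices outside $X\cup Y$ may have many neighbours in $X$. We cannot simply delete every colour used on $X$ from their lists, since $|X|$ may be as large as $t(\ln t)^5$.

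\textbf{Step 1: reserve a palette.} For each vertex $v\in X\cup Y$, choose a sublist $L_0(v)\subseteq L(v)$. The aim is that $X\cup Y$ is coloured only from a small set of colours per vertex, and that every outside vertex loses only $O(t+d)$ colours.

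\textbf{Step 2: split off the vertices with many neighbours in $X$.} Let $B$ be the set of vertices outside $X\cup Y$ having at least $2t$ neighbours in $X$. The bipartite graph between $X$ and $B$ is $K_t$-minor-free, so Lemma~\ref{lem:bipartite} gives
\[
|B|\le \cst{lem:bipartite}(\ln t)|X|\le \cst{lem:bipartite}\,t(\ln t)^6 .
\]
This exceeds the range of Lemma~\ref{thm:local-list}. Following Postle's proof of his Lemma~4.11, I would instead iterate. Set $X_0=X$, and let $X_{i+1}$ be the set of vertices outside $Y\cup X_0\cup\dots\cup X_i$ with at least $2t$ neighbours in $X_i$.

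This iteration does not shrink the sets, so it is better to enlarge the threshold. Use a threshold $2^{i}\cdot 2t$ at level $i$, or choose a threshold $D=C't$ with $C'$ large. Applying Lemma~\ref{lem:bipartite} to $K_{t'}$ with $t'=D/2$ (the graph is certainly $K_{t'}$-minor-free) gives the bound $|B|\le \cst{lem:bipartite}\ln(D)|X|\cdot(t\text{-scaled})$. Counting edges gives a cleaner alternative. Since $K_t$-minor-free graphs have density $O(t\sqrt{\ln t})$, any set $B$ of vertices each sending $D$ edges into $X$ satisfies
\[
|B|\,D\le e(X\cup B)\le O\bigl(t\sqrt{\ln t}\bigr)\,|X\cup B| .
\]
For $D\ge t\ln t$ this gives $|B|\le |X|$, which is still too weak. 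The approach I would commit to is therefore the Postle-style argument: replace $X$ by $X'=X\cup B$, keeping $|X'|\le t(\ln t)^5$ after adjusting constants. If needed, assume $|X|\le t(\ln t)^4$ and apply Lemma~\ref{thm:local-list} with $t$ replaced by a constant multiple. Lemma~\ref{thm:local-list} then colours $G[X']$ from lists of size $O(t)$.

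\textbf{Step 3: colour in order and pass to the remainder.} First colour $G[X']$ from a sublist of size $\cst{thm:local-list}t$; Lemma~\ref{thm:local-list} applies since $G[X']$ is $K_t$-minor-free and small. Next colour $G[Y]$ greedily along its degeneracy order. Each vertex of $Y$ should have lists avoiding the colours on its neighbours in $X'$. Since a vertex of $Y$ may have many neighbours in $X'$, I would colour $Y$ first with $d+1$ reserved colours, so $Y$ is coloured before $X'$. Then colour $X'$ with lists reduced by the colours on neighbours in $Y$. For this to work, vertices of $X'$ must have few neighbours in $Y$ as well, and this gap must be handled by moving such vertices into the set treated by Lemma~\ref{lem:bipartite}.

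Finally, every vertex outside $X'\cup Y$ has fewer than $2t$ neighbours in $X'$ and at most $d$ in $Y$. Removing their colours therefore costs at most $2t+d$ colours. The resulting sublist on $G\setminus(X\cup Y)$ must be bad, since otherwise the colouring extends to $G$. Vertices of $B$ are also removed; this is acceptable, because a bad list on an induced subgraph extends back to a bad list on the supergraph by adding those vertices with arbitrary lists.

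\textbf{Main obstacle.} The hard part is controlling the interplay between $X$ and $Y$ and the growth of $B$, that is, keeping the set coloured by Lemma~\ref{thm:local-list} below the $t(\ln t)^5$ threshold. My first attempt would be a random partition of the colours into three classes: one for $X'$, one for $Y$, and one for the rest. This makes the colourings of different parts automatically compatible, at the cost of only a constant-factor loss in the list sizes.
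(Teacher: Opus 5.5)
\paragraph{There is a genuine gap.} It sits exactly at the step you flag as the main obstacle. Your committed route adds the set $B$ of vertices with at least $2t$ neighbors in $X$ to the set colored by Lemma~\ref{thm:local-list}. But $|X\cup B|$ can be of order $\cst{lem:bipartite}t(\ln t)^6$, which is beyond the range of that lemma.

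You cannot assume $|X|\le t(\ln t)^4$: the statement allows $|X|$ up to $t(\ln t)^5$, and its applications in Lemmas~\ref{cor:many-pieces} and~\ref{lem:separation} use larger sets. Replacing $t$ by a constant multiple gains only a constant factor.

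Even granting a coloring of $X'=X\cup B$, your claim that every vertex outside $X'\cup Y$ has fewer than $2t$ neighbors in $X'$ is false. Since $B$ is defined relative to $X$, an outside vertex may have many neighbors in $B$. Iterating the construction multiplies the size by a further factor of order $\ln t$ at each round.

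Your fallback, a random partition of the colors into three classes, loses a constant fraction of every list. It gives $|L'|\ge c|L|$ instead of the required additive bound $|L'|\ge|L|-C(t+d+1)$. The later applications, where $|L|$ may be far larger than $t+d$, depend on the loss being additive.

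\paragraph{The missing idea.} $B$ never has to be colored; it only has to enter a union bound. The construction is as follows.
\begin{enumerate}
\item Take $B$ to be all vertices outside $X$ with at least $2t$ neighbors in $X$, \emph{including} those in $Y$. Then $|B|\le\cst{lem:bipartite}|X|\ln t$ by Lemma~\ref{lem:bipartite}.
\item Trim all lists to size $|L|$ and set $q=\cst{thm:local-list}t$. Choose a random color set $A$ containing each color independently with probability $2q/|L|$.
\item By Chernoff bounds and a union bound, fix $A$ with $q\le|L(v)\cap A|\le3q$ for every $v\in X\cup B$. The union bound is over only $O(t(\ln t)^6)$ vertices, against a failure probability of $e^{-\Omega(q)}$ per vertex.
\item Color $G[X]$ from the lists $L(v)\cap A$ by Lemma~\ref{thm:local-list}.
\end{enumerate}
Because $X$ uses only colors of $A$, each vertex of $B$ loses at most $|L(v)\cap A|\le3q$ colors, however many neighbors it has in $X$. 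Every other vertex outside $X$ loses fewer than $2t\le q$ colors.

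The lists on $Y$ therefore still have size at least $|L|-3q\ge d+1$. So $G[Y]$ is colored greedily along a degeneracy order after $X$, and your concern about the order of coloring $X$ and $Y$ disappears. Each vertex outside $X\cup Y$ then loses at most $d$ further colors.

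This gives $|L'|\ge|L|-3q-d\ge|L|-C(t+d+1)$ with $C=3\cst{thm:local-list}+1$. Finally, $L'$ is bad because any $L'$-coloring would extend to an $L$-coloring of $G$.
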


\begin{proof}
Set $C=3\cst{thm:local-list}+1$.
By deleting extra colors, assume that every list has size $|L|$.
Let $Z=\{v\notin X:|N_G(v)\cap X|\ge2t\}$.
By Lemma~\ref{lem:bipartite}, applied to the bipartite subgraph with
parts $X,Z$ and all edges between them,
$|Z|\le\cst{lem:bipartite}|X|\ln t$.

Let $q=\cst{thm:local-list}t$. Since $|L|\ge C(t+d+1)\ge3q+d+1$,
we can choose a random set $A\subseteq\bigcup_{v\in X\cup Z}L(v)$
by including each color independently with probability $2q/|L|$.
For each $v\in X\cup Z$, the random variable $|L(v)\cap A|$ has mean $2q$.
By the Chernoff bounds~\cite{MitzenmacherUpfal2017} (Theorems~4.4(2) and~4.5(2))
and a union bound, the probability that some such vertex has
$|L(v)\cap A|<q$ or $|L(v)\cap A|>3q$ is at most
\[
 2(|X|+|Z|)e^{-\frac{q}{6}}
 \le2(1+\cst{lem:bipartite}\ln t)t(\ln t)^5e^{-\frac{\cst{thm:local-list}t}{6}}=o(1).
\]
Fix an outcome with $q\le|L(v)\cap A|\le3q$ for every $v\in X\cup Z$.
By Lemma~\ref{thm:local-list}, color $G[X]$ from the lists $L(v)\cap A$.

Delete from each remaining list the colors used on neighbors in $X$.
At most $3q$ colors are deleted at each vertex of $Z$, and fewer than
$2t\le q$ at every other vertex outside $X$.
The remaining lists have size at least $|L|-3q\ge d+1$, so $G[Y]$ has
a coloring from these lists.
Delete the colors used on neighbors in $Y$ from the lists outside $X\cup Y$.
The resulting assignment $L'\subseteq L$ satisfies
$|L'|\ge|L|-3q-d\ge|L|-C(t+d+1)$.
Any $L'$-coloring would extend to an $L$-coloring of $G$, so $L'$ is bad.
\end{proof}

Delcourt and Postle~\cite{DelcourtPostle2025} proved the following result as Theorem~2.3
in their paper.

\begin{lemma}[\cite{DelcourtPostle2025}]\label{thm:small-connected}
There is an absolute constant $C=\cst{thm:small-connected}\ge1$ such that,
for integers $k\ge t\ge3$, every $K_t$-minor-free graph $G$ with
$\mathsf{d}(G)\ge Ck$ contains a $k$-connected
subgraph on at most $C^2t(\ln t)^3$ vertices.
\end{lemma}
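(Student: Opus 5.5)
This is Theorem~2.3 of Delcourt and Postle, and the plan is to reduce it to finding a \emph{small dense} subgraph and then applying Mader's theorem. Recall that every graph of density at least $2k$ contains a $k$-connected subgraph. So it suffices to show the following: every $K_t$-minor-free graph $G$ with $\mathsf d(G)\ge Ck$ contains a subgraph $G'$ with $\mathsf v(G')\le C^2t(\ln t)^3$ and $\mathsf d(G')\ge 2k$. Mader's theorem applied inside $G'$ then gives the required $k$-connected subgraph, which automatically has at most $C^2t(\ln t)^3$ vertices. We may also assume $k\le c_0t\sqrt{\ln t}$: otherwise the Kostochka--Thomason density bound shows that no $K_t$-minor-free graph has density $Ck$, and the statement is vacuous. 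This upper bound on $k$ is what keeps all neighbourhoods of size $O(t\,\mathrm{polylog}\,t)$.

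\emph{Step 1.} Pass to a subgraph $G_0$ of $G$ that is minimal subject to $\mathsf d(G_0)\ge Ck/2$. By minimality, $\delta(G_0)\ge Ck/2$. Moreover $G_0$ has a vertex $v$ of degree at most $2\mathsf d(G_0)$, and $\mathsf d(G_0)=O(t\sqrt{\ln t})$. Set $X_0=N_{G_0}[v]$, so that $|X_0|=O(t\sqrt{\ln t})$.

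\emph{Step 2.} Grow $X_0$ in a controlled way. Define $X_{i+1}$ as $X_i$ together with all vertices having at least $2t$ neighbours in $X_i$. By Lemma~\ref{lem:bipartite}, $|X_{i+1}|\le(1+\cst{lem:bipartite}\ln t)|X_i|$. After three rounds this gives $|X_3|=O(t(\ln t)^{3.5})$. To reach exactly $(\ln t)^3$, I would start instead from a better seed: a vertex $v$ together with a set of $O(t)$ of its neighbours carrying $\Omega(k)$ degree inside $G_0[N(v)]$, found by averaging. Alternatively, I would accept a slightly weaker exponent and tune it by using the dense-neighbourhood structure of the minimal graph $G_0$.

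\emph{Step 3.} This is the crux and where I expect the main obstacle: showing that some $G_0[X_i]$ has density $\Omega(k)$. A vertex $u\in X_i\setminus X_{i-1}$ that is not absorbed into $X_{i+1}$ sends fewer than $2t$ edges back into $X_i$. The edges counted in the growth therefore come from the vertices with many neighbours, and each of these contributes at least $2t$ edges inside $X_{i+1}$. I would argue by a potential/averaging argument over the levels $i=0,1,2,3$. If every $G_0[X_i]$ were sparse (density below $\epsilon k$), then most of the $\ge Ck/2$ edges at each vertex of $X_i$ would leave $X_i$. Each such vertex $w$ then contributes more than $Ck/2-\epsilon k$ outgoing edges, yet these land on vertices outside $X_{i+1}$ only if those vertices have fewer than $2t$ neighbours in $X_i$. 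Double counting these edges against the bound on $|X_{i+1}|$ from Lemma~\ref{lem:bipartite} should force the sizes $|X_i|$ to grow by a factor comparable to $k/t$ per level. That growth would contradict the $(1+\cst{lem:bipartite}\ln t)$ growth bound once $C$ is large relative to $\cst{lem:bipartite}$.

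The delicate point in Step 3 is handling the vertices with between $1$ and $2t$ neighbours in $X_i$, since they are not controlled by Lemma~\ref{lem:bipartite}. If the counting above fails, I would fall back on the Norin--Postle--Song small-dense-subgraph lemma, which directly gives a subgraph on $O(t(\ln t)^3)$ vertices with density $\Omega(\mathsf d(G))$. Combining that lemma with Mader's theorem yields the statement with $C$ chosen large enough.
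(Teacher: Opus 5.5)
The paper does not prove this lemma; it quotes it as Theorem~2.3 of Delcourt and Postle. Your reduction is sound: find a subgraph on $O(t(\ln t)^3)$ vertices with density at least $2k$, then apply Mader's theorem. Steps~1--2 are fine as far as they go. But all the content sits in Step~3, and Step~3 fails.

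In your double count, the edges leaving $X_i$ that do not land in $X_{i+1}$ go to vertices that each receive fewer than $2t$ of them, and nothing bounds how many such vertices there are. What the count forces to grow by a factor of order $k/t$ is $N_{G_0}(X_i)$, not $X_{i+1}$, and Lemma~\ref{lem:bipartite} says nothing about $N_{G_0}(X_i)$. So there is no contradiction with the $(1+\cst{lem:bipartite}\ln t)$ growth bound.

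In fact the intermediate claim that some $G_0[X_i]$ has density $\Omega(k)$ is false. Take $k=t$ and $G=G(m,p)$ with $m=\Theta(t\ln t)$ and $p=\Theta(1/\ln t)$. Choose the constants (depending on $C$) so that $G$ has average degree at least $2Ct$ and, by the Fountoulakis--K\"uhn--Osthus asymptotics $h(G(m,p))\approx m\sqrt{p/\ln(mp)}$, no $K_t$ minor. With high probability all degrees are $\Theta(t)$ and every two vertices have only $O(t/\ln t)<2t$ common neighbours. Hence, for every subgraph $G_0$ and every vertex $v$, the set $X_0=N_{G_0}[v]$ spans density $O(t/\ln t)=o(k)$, and no vertex outside it has $2t$ neighbours in it. So $X_0=X_1=X_2=X_3$ and every $G_0[X_i]$ is sparse. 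The small dense subgraph here is all of $G$ (the ball of radius $2$ around $v$), and it is invisible to a growth rule with threshold $2t$.

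Your fallback does not close the gap. Suppose a lemma gives, in every $K_t$-minor-free graph, a subgraph on $O(t(\ln t)^3)$ vertices with density $\Omega(\mathsf d(G))$. Up to constants this is equivalent to the lemma you are proving: use Mader's theorem in one direction, and $\delta\ge k$ for a $k$-connected graph in the other. It is exactly the Delcourt--Postle result being quoted, not a black box available from Norin--Postle--Song with these parameters, so invoking it is circular.

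Separately, the exponent $(\ln t)^{3.5}$ from Step~2 is never brought down to the required $(\ln t)^3$. Accepting a weaker exponent would not prove the lemma as stated.

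As it stands, the proposal establishes only the easy reduction to small dense subgraphs. The missing ingredient is a way to locate small dense pieces that bounded-radius neighbourhood growth cannot see.
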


Adding edges preserves vertex connectivity, so this subgraph may be taken
to be induced. The following list version of connectivity extraction was
extracted by Postle~\cite{Postle2020} (Theorem~3.16) from the argument
of Gir\~ao and Narayanan~\cite{GiraoNarayanan2022}.

\begin{lemma}[\cite{GiraoNarayanan2022}]\label{thm:bad-connected}
Let $k\ge1$ be an integer. If $L$ is a bad list assignment on $G$
with $|L|\ge4k+1$, then $G$ has an induced $k$-connected subgraph $H$
with a bad sublist $L_H\subseteq L$ satisfying $|L_H|\ge|L|-4k$.
\end{lemma}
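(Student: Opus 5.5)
The statement is the list analogue of Lemma~\ref{thm:local-ordinary-extraction}, and I plan to prove it directly by a minimal-counterexample argument in the style of Gir\~ao and Narayanan, rather than by reducing to the ordinary chromatic statement. Following Observation~\ref{obs:bad-density}, I first pass to an induced subgraph that is vertex-minimal subject to carrying a bad sublist $L'\subseteq L$ with $|L'|\ge|L|-4k+\text{(slack)}$; more precisely, among all pairs $(H,L_H)$ with $H$ an induced subgraph of $G$, $L_H\subseteq L$ bad on $H$, and $|L_H|\ge |L|-4k$, I will choose one by a potential function that trades off $\mathsf v(H)$ against how much of the $4k$ budget has been spent. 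Concretely, I will induct on $\mathsf v(G)$ with a strengthened statement: if $L$ is bad with $|L|\ge 4k+1$, then either $G$ itself is $k$-connected (and we take $H=G$, $L_H=L$), or we find a smaller induced subgraph with a bad sublist whose loss is charged appropriately.

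If $G$ (after restricting to a vertex-minimal non-$L$-colorable induced subgraph, so $\delta\ge|L|>k$ and $\mathsf v(G)>k$) is not $k$-connected, take a separator $S$ with $|S|<k$ and sides $Y_1,Y_2$. The key step is: try to $L$-color $G[Y_1\cup S]$. If it is not $L$-colorable, recurse into it (a smaller graph, same list). Otherwise, fix an $L$-coloring of $G[Y_1\cup S]$, but to protect the other side I instead color only $S$ first. The standard trick is: color $G[S]$ greedily from $L$ (possible since $|S|<k<|L|$), remove used colors from neighbors, and note that at least one of $G[Y_1]$, $G[Y_2]$ with the reduced lists must be bad, since otherwise colorings of both sides combine with the coloring of $S$ (no edges between $Y_1,Y_2$). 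This loses fewer than $k$ colors per separation, which is too expensive if iterated; so the charging must be done cleverly.

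To control the total loss by $4k$, I will follow the Gir\~ao--Narayanan potential: pick the separation minimizing the smaller side and use minimality so that the side we recurse into has significantly smaller size, and argue that list losses accumulate only along a sequence of separators whose union is bounded — the trick being that when we recurse into $Y_i$ we keep $S$ inside (work with $G[Y_i\cup S]$) with the colors on $S$ \emph{not} yet fixed, and only delete colors at the very end for a bounded set of ``boundary'' vertices. I expect the main obstacle to be exactly this accounting: ensuring the boundary set whose colors are committed has size at most about $4k$ at the end, e.g. by maintaining an invariant that the current graph $H$ has a designated boundary set $B$ with $|B|\le 2k$ such that any $L$-coloring of $H$ that avoids conflicts on $B$ extends, and choosing separations so that the new boundary (part of old boundary plus $S$) is again at most $2k$ (choosing the side containing at most half of $B$ when $|B|$ would exceed $k$, giving $|B|/2+k\le 2k$). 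At termination $H$ is $k$-connected, and removing colors used by a precoloring of $B$ costs at most $|B|+$ slack $\le 4k$, yielding $L_H$ with $|L_H|\ge|L|-4k$.
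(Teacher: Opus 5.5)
The paper gives no proof of this lemma. It quotes it from Postle (Theorem~3.16), who extracted it from the Gir\~ao--Narayanan argument. Your sketch therefore has to stand on its own, and it does not. The potential function and the ``strengthened statement'' are never formulated. The step you call the main obstacle, keeping the boundary bounded, is the substance of the lemma, and it is left undone.

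Your invariant only makes sense if the boundary is \emph{precolored}: a set $B\subseteq V(H)$ with $|B|\le 2k$ and an $L$-coloring $\psi$ of $G[B]$ that does not extend to an $L$-coloring of $H$. With the colors on $S$ ``not yet fixed'' nothing can be maintained, because whether the discarded side can be colored depends on those colors. Under the precolored reading, two of your steps work.
\begin{itemize}
\item \emph{Termination.} Keep $B$ in the output with its full lists, and delete the at most $|B|$ colors of $\psi(B)$ from every other list of $H$. Any coloring from these lists, recolored by $\psi$ on $B$, would extend $\psi$, so this sublist is bad.
\item \emph{Halving.} Take a separation $(Y_1\cup S,Y_2\cup S)$ of $H$ with $|S|<k$, and suppose the side $Y_i$ with $|B\cap Y_i|\le|B|/2$ contains a vertex outside $B$. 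Either $\psi$ does not extend to $H\setminus(Y_i\setminus B)$, which is a smaller instance with the same boundary. Or an extension $\phi$ exists and gives the smaller instance $\bigl(H[Y_i\cup S],\,(B\cap Y_i)\cup S,\,\phi|_{(B\cap Y_i)\cup S}\bigr)$, whose boundary has size at most $2k-1$.
\end{itemize}

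The gap is the case $Y_i\subseteq B$, for instance a precolored vertex with fewer than $k$ neighbours. Side $i$ is then useless, since coloring the discarded side compatibly with $\psi$ would mean extending $\psi$ to all of $H$. Passing instead to $Y_{3-i}$ after coloring $S$ gives a boundary of size up to $|B|+k-2$, and iterating this leaves $|B|$ unbounded.

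You also cannot simply stop, or discard such vertices. Take $k\ge3$ and build the following configuration:
\begin{itemize}
\item $H\setminus B$ consists of pieces $P_1,P_2$ with no edges between them, plus a set $S$ of fewer than $k$ further vertices;
\item $B$ consists of $2k$ vertices, each adjacent to exactly one vertex of $P_1$ and one of $P_2$;
\item $\psi$ extends to $H\setminus P_1$ and to $H\setminus P_2$, but not to $H$.
\end{itemize}
Then no induced $J$ with $H\setminus B\subseteq J\subseteq H$ is $k$-connected, so uncolored vertices must be given up. But every vertex of $B$ is adjacent to both $P_1$ and $P_2$, so passing to either side keeps all of $B$ in the boundary and adds the separator. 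The bound $|B|\le2k$ cannot be maintained.

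Separating only $H\setminus B$ does not help either. Precolored vertices adjacent to both sides are inherited by both, and nothing is halved. Controlling exactly these boundary vertices is what an actual proof must do. Your proposal gives no mechanism for it, and the final estimate ``$|B|+$ slack $\le4k$'' is asserted, not derived.
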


The next lemma follows the proof of Corollary~4.12 in~\cite{Postle2020}.
Enlarge $\tst{thm:local-list}$ so that $2\cst{thm:small-connected}^2\le\ln t$
for every $t\ge \tst{thm:local-list}$.

\begin{lemma}\label{cor:many-pieces}
For integers $k\ge t\ge \tst{thm:local-list}$, every $K_t$-minor-free graph $G$
with a bad list assignment $L$ satisfying
$|L|\ge8\cst{lem:deletion}\cst{thm:small-connected}k$ contains
$\ceil{\ln t}+1$ disjoint induced $k$-connected subgraphs,
each on at most $\cst{thm:small-connected}^2t(\ln t)^3$ vertices.
\end{lemma}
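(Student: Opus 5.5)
We argue iteratively: we find the pieces one at a time, and after each piece we delete it with Lemma~\ref{lem:deletion} while keeping a large bad sublist. Set $m=\ceil{\ln t}+1$, $G_0=G$, $L_0=L$. Suppose we have found disjoint induced $k$-connected subgraphs $H_1,\ldots,H_i$ with $i<m$, each on at most $\cst{thm:small-connected}^2t(\ln t)^3$ vertices, and a bad sublist $L_i\subseteq L$ on $G_i=G\setminus\bigcup_{h\le i}V(H_h)$. We also maintain the invariant
\[
|L_i|\ge|L|-i\,\cst{lem:deletion}(t+1)\ge\bigl(8\cst{lem:deletion}\cst{thm:small-connected}-2i\cst{lem:deletion}\bigr)k .
\]
We use $t\le k$ here, so $t+1\le2k$.

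To find the next piece, apply Observation~\ref{obs:bad-density} to $L_i$ on $G_i$. This gives a subgraph of $G_i$ of density at least $|L_i|/2$. If $|L_i|/2\ge\cst{thm:small-connected}k$, Lemma~\ref{thm:small-connected} applies to that subgraph, which is $K_t$-minor-free as a subgraph of $G$. It yields a $k$-connected subgraph on at most $\cst{thm:small-connected}^2t(\ln t)^3$ vertices, and by the remark after Lemma~\ref{thm:small-connected} we may take it induced; call it $H_{i+1}$. It is disjoint from $H_1,\ldots,H_i$ because it lives in $G_i$.

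Now delete $H_{i+1}$ using Lemma~\ref{lem:deletion} with $X=V(H_{i+1})$, $Y=\varnothing$ and $d=0$. For this we need $|X|\le t(\ln t)^5$. This holds because $\cst{thm:small-connected}^2(\ln t)^3\le(\ln t)^5$, which follows from the choice $2\cst{thm:small-connected}^2\le\ln t$. We also need $|L_i|\ge\cst{lem:deletion}(t+1)$. The lemma then gives a bad sublist $L_{i+1}$ on $G_{i+1}$ with $|L_{i+1}|\ge|L_i|-\cst{lem:deletion}(t+1)$, which keeps the invariant.

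The main obstacle is the list budget, since we lose $\cst{lem:deletion}(t+1)$ per step over $m\approx\ln t$ steps. A naive total loss of order $\cst{lem:deletion}\,t\ln t$ could exceed $8\cst{lem:deletion}\cst{thm:small-connected}k$ when $k$ is close to $t$. To fix this, we delete all pieces found so far together, as a single set $X=\bigcup_{h\le i}V(H_h)$ applied to the original $G$ and $L$, rather than sequentially. The size of this set is at most
\[
m\,\cst{thm:small-connected}^2t(\ln t)^3\le(\ln t+2)\cdot\tfrac12(\ln t)\,t(\ln t)^3\le t(\ln t)^5
\]
for large $t$, again using $2\cst{thm:small-connected}^2\le\ln t$. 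This is exactly why that enlargement of $\tst{thm:local-list}$ was imposed. So at every stage Lemma~\ref{lem:deletion} gives a bad sublist $L_i$ on $G_i$ with $|L_i|\ge|L|-2\cst{lem:deletion}k\ge6\cst{lem:deletion}\cst{thm:small-connected}k$. Then Observation~\ref{obs:bad-density} gives density at least $3\cst{lem:deletion}\cst{thm:small-connected}k\ge\cst{thm:small-connected}k$, provided $\cst{lem:deletion}\ge1$, which holds since $\cst{lem:deletion}=3\cst{thm:local-list}+1$. With this, the induction runs for all $m$ steps and produces the required $\ceil{\ln t}+1$ pieces.
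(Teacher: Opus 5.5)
Your final argument is correct and is essentially the paper's proof. You delete the union of all pieces found so far from the original $G$ with a single application of Lemma~\ref{lem:deletion} (with $Y=\varnothing$, losing only $\cst{lem:deletion}(t+1)\le2\cst{lem:deletion}k$), then apply Observation~\ref{obs:bad-density} and Lemma~\ref{thm:small-connected} to obtain the next piece; the paper does the same step as a contradiction with the maximality of a family of at most $\ceil{\ln t}$ pieces. In the write-up, drop the sequential invariant $|L_i|\ge|L|-i\,\cst{lem:deletion}(t+1)$ from your opening paragraphs, since, as you noted, it does not survive the list budget when $k$ is close to $t$ and your final argument does not use it.
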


\begin{proof}
Choose a maximal family $\mathcal H$ of disjoint induced $k$-connected
subgraphs, each of order at most $\cst{thm:small-connected}^2t(\ln t)^3$.
Suppose that $|\mathcal H|\le\ceil{\ln t}$, and let
$X=\bigcup_{H\in\mathcal H}V(H)$. Then
$|X|\le2\cst{thm:small-connected}^2t(\ln t)^4\le t(\ln t)^5$.
Since $\cst{lem:deletion}(t+1)\le2\cst{lem:deletion}k\le|L|/2$,
by Lemma~\ref{lem:deletion} with $Y=\varnothing$, the graph
$G\setminus X$ has a bad sublist $L'\subseteq L$ with $|L'|\ge|L|/2$.
By Observation~\ref{obs:bad-density}, it has an induced subgraph of
density at least $|L'|/2\ge|L|/4\ge\cst{thm:small-connected}k$.
By Lemma~\ref{thm:small-connected}, this subgraph contains another
induced $k$-connected subgraph of the required order, contradicting
the maximality of $\mathcal H$.
\end{proof}

\section{Paths and rooted minors}\label{sec:fans}

In this section, we collect some useful lemmas. Most of the lemmas
are from~\cite{Postle2020,DelcourtPostle2025}.
%We slightly modify some statements
%for use in our proof.

Given a graph $G$ and disjoint sets $A,B\subseteq V(G)$,
an \emph{$(A,B)$-path} in $G$ is a path with one end in $A$, the other
in $B$, and no internal vertex in $A\cup B$. A set $X\subseteq V(G)$
\emph{separates} $A$ from $B$ in $G$ if every $(A,B)$-path in $G$ contains
a vertex of $X$. A separating set may contain
vertices of $A\cup B$.
We use the following form of Menger's theorem~\cite{Menger1927} to find disjoint paths.

\begin{lemma}[Menger's theorem~\cite{Menger1927}]\label{lem:menger}
For disjoint vertex sets $A,B$ in a graph, the maximum number of
vertex-disjoint $(A,B)$-paths equals the minimum size of a set
separating $A$ from $B$.
\end{lemma}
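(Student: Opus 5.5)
We prove Menger's theorem by induction on the number of edges, following the standard argument (essentially Göring's short proof). The inequality ``maximum $\le$ minimum'' is immediate. If $X$ separates $A$ from $B$, then every $(A,B)$-path contains a vertex of $X$. Disjoint paths contain distinct vertices of $X$, so there are at most $|X|$ of them. The work lies in the reverse inequality. Let $k$ denote the minimum size of a separating set; we must produce $k$ disjoint $(A,B)$-paths.

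For the induction, suppose the graph $G$ has no edges. Then every $(A,B)$-path is a single vertex, and since $A,B$ are disjoint, no such path exists. So $k=0$ and the statement holds. Although the statement is phrased for disjoint $A,B$, it is convenient to prove the slightly more general version in which $A$ and $B$ may intersect, with vertices of $A\cap B$ counting as trivial paths. This version is needed because contraction below may merge vertices across $A$ and $B$; the disjoint case is then a special case.

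Now let $e=xy$ be an edge of $G$, and form $G/e$ by contracting $e$ to a new vertex $v_e$. This vertex is placed in $A$ (resp.\ $B$) if $x$ or $y$ lies in $A$ (resp.\ $B$). First suppose $G/e$ has $k$ disjoint $(A,B)$-paths. Each path avoiding $v_e$ is already a path in $G$. The at most one path through $v_e$ lifts to a path in $G$ by replacing $v_e$ with $x$, $y$, or the edge $xy$, as needed. This gives $k$ disjoint paths in $G$ (after trimming to genuine $(A,B)$-paths if necessary).

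Otherwise, by induction $G/e$ has a separator $Y$ with $|Y|<k$. This $Y$ must contain $v_e$, since otherwise $Y$ would separate $A$ from $B$ in $G$. Then $X=(Y\setminus\{v_e\})\cup\{x,y\}$ separates $A$ from $B$ in $G$ and has $|X|=|Y|+1\le k$, so $|X|=k$.

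Next consider $G-e$. Every set separating $A$ from $X$ in $G-e$ also separates $A$ from $B$ in $G$: any $(A,B)$-path in $G$ meets $X$, and its initial segment up to the first vertex of $X$ is an $(A,X)$-path avoiding $e$ (as $x,y\in X$). Hence this minimum is at least $k$, and by induction there are $k$ disjoint $(A,X)$-paths in $G-e$. Symmetrically, there are $k$ disjoint $(X,B)$-paths in $G-e$. Since $X$ separates $A$ from $B$, paths of the two families meet only in $X$, and each family uses every vertex of $X$ exactly once as an endpoint. Concatenating the two families at $X$ therefore yields $k$ disjoint $(A,B)$-walks, which shorten to paths.

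The main obstacle is the bookkeeping in this last step. One must check that paths from the two families intersect only in $X$, and that the concatenations are genuine paths meeting $A\cup B$ only at their ends. The claim that a vertex of an $(A,X)$-path outside $X$ cannot lie on an $(X,B)$-path rests on $X$ separating $A$ from $B$. Finally, any internal vertex in $A\cup B$ is handled by truncating each path at its last vertex in $A$ and its first subsequent vertex in $B$.
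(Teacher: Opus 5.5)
Your proof is correct. It is G\"oring's contraction argument, the first proof of Menger's theorem in Diestel's \emph{Graph Theory}. The steps you flag all check out:
\begin{itemize}
\item lifting the at most one path through $v_e$ back to $G$;
\item $v_e\in Y$;
\item $X=(Y\setminus\{v_e\})\cup\{x,y\}$ separates $A$ from $B$ with $|X|=k$;
\item initial segments up to the first vertex of $X$ avoid $e$, because $x,y\in X$;
\item the $(A,X)$- and $(X,B)$-families meet only in $X$, and each vertex of $X$ is the end of exactly one path in each family, so they glue into $k$ disjoint $(A,B)$-paths.
\end{itemize}

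The paper takes a different route only in that it gives no proof. It quotes Menger's theorem from the 1927 original as a black box and uses it only to produce disjoint paths (e.g.\ in Lemma~\ref{lem:double-fan}). Your version buys self-containment, while the citation buys brevity, which is reasonable for a classical result.

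Two points of hygiene if you write it up.
\begin{itemize}
\item Your induction runs on the general statement, so the base case must also cover intersecting $A,B$. With no edges, every $(A,B)$-path is trivial at a vertex of $A\cap B$ and every separator contains $A\cap B$. Hence $k=|A\cap B|$, and the trivial paths suffice.
\item For intersecting sets, define an $(A,B)$-path as one whose vertex set meets $A$ exactly in its first vertex and $B$ exactly in its last.
\end{itemize}
Finally, the general version is needed not only because contraction can place $v_e$ in $A\cap B$. You also apply the induction hypothesis to the pairs $(A,X)$ and $(X,B)$ in $G-e$, and $X$ may contain vertices of $A$ or $B$.
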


We will also need two paths from each vertex of a prescribed set.
Given a graph $G$ and disjoint sets $A,B\subseteq V(G)$,
a \emph{double fan from $A$ to $B$}
is a family $\calP$ of $(A,B)$-paths in $G$ such that every vertex
of $A$ is an end of exactly two paths in $\calP$ and
$V(P)\cap V(Q)\subseteq A$ for all distinct $P,Q\in\calP$.
The following lemma is a consequence of Menger's theorem~\cite{Menger1927}.

\begin{lemma}\label{lem:double-fan}
Let $A,B$ be disjoint vertex sets in a graph $G$, with $A\ne\varnothing$.
If $G$ is $2|A|$-connected and $|B|\ge2|A|$, then $G$ contains
a double fan from $A$ to $B$.
\end{lemma}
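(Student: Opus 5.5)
The plan is the standard reduction to Menger's theorem by splitting each vertex of $A$ into two copies. Let $a=|A|$. Form an auxiliary graph $G'$ from $G$ by adding, for each $v\in A$, a new vertex $v'$ adjacent to exactly the neighbours of $v$ in $G$. Write $A'=A\cup\{v':v\in A\}$, so $|A'|=2a$. Since $|B|\ge 2a$, it suffices to find $2a$ vertex-disjoint $(A',B)$-paths in $G'$.

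Such a family translates back directly. Each path meets $A'$ only at its start, so each of the $2a$ vertices of $A'$ starts exactly one path. Replacing $v'$ by $v$ turns the path from $v'$ into a path in $G$ starting at $v$. The two paths from $v$ and $v'$ give two $(A,B)$-paths at $v$ that meet only in $v$. Paths coming from different vertices of $A$ are disjoint. Each path avoids $A$ internally, because it avoids $A'$ internally. This is exactly a double fan from $A$ to $B$. One degenerate point needs care: if $v$ is adjacent to some $b\in B$, both $v$ and $v'$ could use the edge to $b$. That cannot happen, because the paths in $G'$ are disjoint and so end at distinct vertices of $B$.

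By Lemma~\ref{lem:menger}, it remains to show that no set $S\subseteq V(G')$ with $|S|<2a$ separates $A'$ from $B$ in $G'$. Suppose such an $S$ exists. Some vertex of $A'$ lies outside $S$; let it be $v$ or $v'$, where $v\in A$. Likewise some $b\in B\setminus S$ exists, since $|B|\ge 2a>|S|$. Let $S_0=S\cap V(G)$, let $T=\{u\in A: u'\in S\}$, and put $S^*=(S_0\cup T)\setminus\{v\}$. Then $|S^*|\le|S|<2a$. Because $G$ is $2a$-connected, $G\setminus S^*$ is connected and contains $b$ and $v$, so it has a $v$--$b$ path $P$.

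The main obstacle is turning $P$ into an $(A',B)$-path in $G'$ that avoids $S$. Take the last vertex $u$ of $P$ that lies in $A$, and let $Q$ be the subpath from $u$ to $b$. This is an $(A,B)$-path in $G$, possibly a single vertex if $u=b$ (impossible since $A\cap B=\varnothing$). If $u\ne v$, then $u\notin S^*$, so both $u\notin S$ and $u'\notin S$; hence $Q$ already avoids $S$ in $G'$. If $u=v$, use whichever of $v,v'$ lies outside $S$ as the start, and $v'$ has the same neighbours as $v$. In both cases the internal vertices of $Q$ are in $V(G)\setminus(A\cup S)$, and they are not copies. So $Q$, or its copy starting at $v'$, is an $(A',B)$-path avoiding $S$. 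This contradicts the choice of $S$, and Menger's theorem yields the $2a$ disjoint paths.
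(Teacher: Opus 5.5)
Your proof is correct and takes essentially the paper's route: duplicate each vertex of $A$, apply Menger's theorem to obtain $2|A|$ disjoint paths from $A\cup A'$ to $B$, and identify each copy with its original. The one difference is that the paper simply asserts the enlarged graph stays $2|A|$-connected, whereas you check the separator condition directly by projecting $S$ back to $G$.

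One small slip: the subpath $Q$ from the last $A$-vertex $u$ to $b$ may pass through other vertices of $B$, so it need not be an $(A,B)$-path. Truncating $Q$ at its first vertex in $B$ after $u$ fixes this, since every vertex of $P$ other than $v$ lies outside $S$.
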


\begin{proof}
For each $a\in A$, add a copy $a'$ with the same neighbors as $a$.
Let $A'$ be the set of copies. The resulting graph is still
$2|A|$-connected. By Lemma~\ref{lem:menger}, it contains $2|A|$ disjoint
$(A\cup A',B)$-paths, one with an end at each vertex of $A\cup A'$.
Replace each end $a'$ by $a$. The resulting paths form the required double fan.
\end{proof}

The next lemma follows from the redundancy version of Menger's theorem
stated by Delcourt and Postle~\cite{DelcourtPostle2025} (Lemma~5.3)
and Postle~\cite{Postle2020} (Lemma~3.17).
In their formulation, both path families are double fans. We record
the reduction for the form needed here.

\Needspace{7\baselineskip}
\begin{lemma}\label{lem:join-fans}
Let $r\ge1$ be an integer, and let $A,B,C$ be pairwise disjoint vertex sets
in a graph $G$.
Let $\calP$ be a double fan from $A$ to $C$ avoiding $B$.
Let $\calQ$ be a family of $2r$ disjoint $(B,C)$-paths in $G\setminus A$.
The subgraph formed by the paths in $\calP$ and $\calQ$ contains
$|A|+r$ disjoint $(A\cup B,C)$-paths.
Each vertex of $A$ is an end of exactly one of these paths.
The other $r$ paths have distinct ends among the ends of $\calQ$ in $B$.
\end{lemma}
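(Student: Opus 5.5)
The plan is to apply Menger's theorem (Lemma~\ref{lem:menger}) in an auxiliary graph. The auxiliary graph will force every vertex of $A$ to be used, and it will force exactly $r$ paths to start in $B$.

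Let $U$ be the subgraph formed by the paths in $\calP\cup\calQ$, and let $B_0\subseteq B$ be the set of ends of $\calQ$ in $B$, so $|B_0|=2r$. Form $U'$ from $U$ by adding a set $S$ of $r$ new vertices, each adjacent to every vertex of $B_0$. I want $|A|+r$ disjoint $(A\cup S,C)$-paths in $U'$. Since $|A\cup S|=|A|+r$, such a family uses every vertex of $A\cup S$ as an end. Deleting $s$ from the path starting at each $s\in S$ then leaves paths starting at distinct vertices of $B_0$, one for each $s$. By Lemma~\ref{lem:menger}, it suffices to show that every set $X$ separating $A\cup S$ from $C$ in $U'$ has $|X|\ge|A|+r$.

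The key counting fact is as follows. The $2|A|$ paths of $\calP$ are $(A,C)$-paths that are pairwise disjoint outside $A$. The $2r$ paths of $\calQ$ are pairwise disjoint, avoid $A$, and are disjoint from the paths of $\calP$ inside $B$ because $\calP$ avoids $B$. Hence a vertex of $A$ lies on exactly two paths of $\calP\cup\calQ$ (both from $\calP$), and any other vertex lies on at most one path of $\calP$ and at most one path of $\calQ$. So every vertex meets at most two paths of $\calP\cup\calQ$. The separator argument splits into two cases.

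\emph{Case $S\not\subseteq X$.} Pick $s\in S\setminus X$. Each path of $\calQ$, prefixed by $s$, is an $(A\cup S,C)$-path, so $X\cap V(U)$ meets every path of $\calQ$. It also meets every path of $\calP$. These are $2|A|+2r$ paths, so $|X|\ge|A|+r$.

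\emph{Case $S\subseteq X$.} Then $X\setminus S$ meets all $2|A|$ paths of $\calP$, so $|X\setminus S|\ge|A|$ and $|X|\ge|A|+r$.

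The remaining step is to check that the resulting paths, with $s$ removed, are genuine $(A\cup B,C)$-paths, i.e.\ that no internal vertex lies in $A\cup B$. I expect this to be the main point needing care. It follows from the structure of $U$, which contains only path edges, not all edges of $G$. No path of $\calP\cup\calQ$ has an internal vertex in $A$, and $\calP$ avoids $B$ while the internal vertices of $\calQ$ avoid $B$. Thus each vertex of $A$ is incident in $U'$ only with first edges of its two $\calP$-paths. Each $b\in B_0$ has degree one in $U$, joined by the first edge of its $\calQ$-path, and is otherwise adjacent only to $S$. An internal occurrence of $b$ would require using an edge to some $s\in S$. That vertex $s$ is the end of another path in the family, which contradicts disjointness (or, if it is the same path, contradicts that $s$ is its end). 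Similarly, an internal vertex of $A$ is impossible, since such a vertex is itself an end of one of the paths.

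So each vertex of $A$ is an end of exactly one path, and the other $r$ paths start at distinct vertices of $B_0$. This is what the lemma requires.
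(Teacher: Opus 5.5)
Your proof is correct, and it takes a different route from the paper's mainly in being self-contained.

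\textbf{The paper's route.} The paper pairs up the $2r$ ends of $\calQ$ in $B$ and adds one new vertex per pair, so the extended $\calQ$-paths form a double fan from the new set $D$ to $C$ avoiding $A$. It then invokes the redundancy version of Menger's theorem (Delcourt--Postle, Lemma~5.3; Postle, Lemma~3.17) as a black box to obtain the $|A|+r$ paths. It finishes with the same degree-one observation about vertices of $B$ that you use.

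\textbf{Your route.} You join $r$ new vertices completely to the $2r$ ends. You then verify the Menger separator bound directly by double counting, since every vertex lies on at most two of the $2|A|+2r$ paths. In effect you reprove the special case of the cited redundancy lemma that is needed here, using only Lemma~\ref{lem:menger}.

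\textbf{A possible simplification.} With your complete join, each vertex of $S$ lies on all $2r$ extended $\calQ$-paths, so the double count alone fails and you need the case split on whether $S\subseteq X$. If you adopted the paper's pairing instead, each new vertex would lie on exactly two extended paths. The whole family would then have every vertex on at most two of its $2|A|+2r$ members, and the same double count would give $|X|\ge|A|+r$ in one line with no case split. The pairing forces the $r$ chosen ends to come one from each pair, which is harmless because the lemma only asks for distinct ends.

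\textbf{A wording point in your final check.} If the $S$-neighbour of some $b\in B_0$ on a path is that path's own end $s$, this is not a contradiction. It means $b$ is the second vertex of the path, and so becomes its end after $s$ is deleted. That is clearly what you intend, and it is exactly what is needed.
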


\begin{proof}
If $A=\varnothing$, any $r$ paths of $\calQ$ suffice. Assume $A\ne\varnothing$
and work in the subgraph formed by $\calP$ and $\calQ$.
Pair the $2r$ ends of $\calQ$ in $B$. For each pair, add a new vertex
adjacent to both ends, and let $D$ be the set of these $r$ new vertices.
The extended paths form a double fan from $D$ to $C$ avoiding $A$.
By the cited redundancy version of Menger's theorem, there are
$|A|+r$ disjoint $(A\cup D,C)$-paths, one starting at each vertex of $A\cup D$.
Delete the end in $D$ from each path starting there. The shortened paths
have distinct ends among the ends of $\calQ$ in $B$.
Every vertex of $B$ has degree at most one in the original subgraph,
so none is an internal vertex of the resulting paths.
\end{proof}

The next lemma follows from Lemma~4.14 of Postle~\cite{Postle2020} and concerns
the sparsity of path families of minimum total length.

\Needspace{7\baselineskip}
\begin{lemma}[\cite{Postle2020}]\label{lem:short-fan}
Let $\ell\ge1$ be an integer, and let $A,B$ be disjoint vertex sets
in a graph $G$. Let $\calP=\{P_i\}_{i\in[\ell]}$ be a family of $\ell$
disjoint $(A,B)$-paths with minimum total length among all such families.
Then the following hold.
\begin{enumerate}
\item For $i\in[\ell]$ and every $v\in V(G)\setminus V(\calP)$,
$|N_G(v)\cap V(P_i)|\le3$.
\item $G[V(\calP)\setminus(A\cup B)]$ is $(2\ell-1)$-degenerate.
\end{enumerate}
\end{lemma}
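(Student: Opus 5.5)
We prove both parts by exchange arguments that contradict the minimality of the total length of $\calP$. The paths are induced, since a chord would give a shortcut. Throughout, we orient each $P_i$ from its end $a_i\in A$ to its end $b_i\in B$.

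For part (1), fix $i$ and a vertex $v\notin V(\calP)$. Suppose $v$ has four neighbors on $P_i$. Let $x$ and $y$ be the first and last of these neighbors along $P_i$. Then the subpath $xP_iy$ contains at least two further neighbors of $v$ as internal vertices, so it has length at least $3$. Replace $xP_iy$ by the path $xvy$, which has length $2$. The new walk is a path, since $v$ lies on no path of $\calP$. It is disjoint from the other paths, and it is still an $(A,B)$-path. Its internal vertices avoid $A\cup B$, because $v\notin A\cup B$: if $v\in A$, the path $vP_i b_i$ from $v$'s last neighbor $y$ would be shorter. The same check handles $v\in B$ via $a_iP_ix$. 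The total length strictly decreases, a contradiction. Thus $v$ has at most three neighbors on $P_i$. The point that $v$ might lie in $A\cup B$ needs care, but in every case one of the replacements $xvy$, $vP_ib_i$ (when $v\in A$), or $a_iP_iv$ (when $v\in B$) is a shorter valid path.

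For part (2), let $W=V(\calP)\setminus(A\cup B)$. We show that every nonempty $S\subseteq W$ induces a subgraph with a vertex of degree at most $2\ell-1$. The natural approach is to take, on each path $P_i$, the vertex of $S$ closest to $a_i$, say $s_i$, and to bound the neighbors of these vertices. The cleaner route is the standard rerouting argument. Suppose every vertex of $S$ has at least $2\ell$ neighbors in $S$. Pick a path $P_i$ meeting $S$, and a vertex $u\in S\cap P_i$ that is extremal in a suitable sense. By pigeonhole, $u$ has at least three neighbors in $S$ on a single path $P_j$, or it has neighbors on $P_i$ far from $u$. We then use the edge from $u$ to jump between paths, splicing a prefix of one path with a suffix of another. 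The key structural fact, from Postle's Lemma~4.14, is that in a minimum-length linkage every edge $uw$ with $u\in P_i$ and $w\in P_j$ can be charged so that each vertex has at most two "forward" neighbors on each path. Orient each edge within $W$ by the order of the positions of its ends along their paths. If a vertex $u\in P_i$ had two neighbors $w,w'$ on $P_j$, with $w'$ farther than $w$ along $P_j$ and both beyond the position of $u$, swapping suffixes would shorten the linkage. The resulting ordering therefore gives each vertex at most $2\ell-1$ later neighbors, hence $(2\ell-1)$-degeneracy.

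The main obstacle is defining the exchange in part (2) correctly. Swapping $a_iP_iu$ plus $uw$ plus $wP_jb_j$ with the complementary pieces must keep the family disjoint and must strictly shorten it. This requires a potential such as the distance to $B$ along the path, with ties handled by comparing positions. Since this lemma is attributed to \cite{Postle2020}, we would ultimately cite Lemma~4.14 there for the precise exchange and verify only the reduction to the stated form.
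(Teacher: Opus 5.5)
Your part (1) is correct, including the handling of $v\in A\cup B$, and so is your remark that each $P_i$ is induced. The paper gives no proof of this lemma; it only cites Postle's Lemma~4.14. Your argument for part (2), however, has a genuine gap: its key claim is false. That claim is that a vertex $u\in P_i$ cannot have two neighbors on $P_j$ beyond its own position. Here is a counterexample. Let $P_1=a_1uc_1\cdots c_nb_1$ and $P_2=a_2w_1\cdots w_nb_2$, with $A=\{a_1,a_2\}$ and $B=\{b_1,b_2\}$, and add the edges $uw_1,\ldots,uw_n$.
\begin{itemize}
\item Since $N_G(a_1)=\{u\}$, the path from $a_1$ must use $u$.
\item The component of $G\setminus\{u\}$ containing $a_2$ is $P_2$.
\item Hence $\{P_1,P_2\}$ is the only pair of disjoint $(A,B)$-paths, so it has minimum total length.
\end{itemize}
Yet $u$, at position $1$, is adjacent to $w_2,\ldots,w_n$. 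Ordering $W$ by positions therefore gives $u$ at least $n$ later neighbors, while $2\ell-1=3$. The suffix swap you describe does not exist either. After forming $a_iP_iuwP_jb_j$ from the single edge $uw$, the leftover pieces (the part of $P_j$ before $w$ and the part of $P_i$ after $u$) are not joined, so no new linkage results. A single edge between two paths gives no exchange.

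The missing idea is the one you mentioned and then abandoned. Fix a nonempty $S\subseteq W$. For each $i$ with $S\cap V(P_i)\ne\varnothing$, let $f_i$ be the vertex of $S$ on $P_i$ closest to $a_i$. For $j\ne i$, write $i\to j$ if $f_i$ has a neighbor $y_j\in S\cap V(P_j)$ at least two steps after $f_j$ along $P_j$.

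Suppose there is a directed cycle $i_1\to\cdots\to i_c\to i_1$. Replace the paths $P_{i_h}$ by $a_{i_h}P_{i_h}f_{i_h}y_{i_{h+1}}P_{i_{h+1}}b_{i_{h+1}}$, with indices taken modulo $c$. These are disjoint $(A,B)$-paths, since $y_{i_h}$ lies after $f_{i_h}$ on $P_{i_h}$ and all $f$'s and $y$'s lie in $W$. The total length changes by $c$ minus the sum of the lengths of the segments $f_{i_h}P_{i_h}y_{i_h}$. Each segment has length at least $2$, so the change is at most $c-2c<0$, contradicting minimality.

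Hence $\to$ is acyclic and has a sink $i$. For each $j\ne i$, the vertex $f_i$ has at most two neighbors in $S\cap V(P_j)$, namely $f_j$ and its successor. On its own path it has at most one neighbor in $S$, namely its successor, because $P_i$ is induced. So $f_i$ has degree at most $2(\ell-1)+1=2\ell-1$ in $G[S]$, which gives the required $(2\ell-1)$-degeneracy.
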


\begin{remark}\label{rem:short-double-fan}
The conclusions of Lemma~\ref{lem:short-fan} also hold for a double fan
from $A$ to $B$ of minimum total length, with $\ell=2|A|\ge1$.
Split each vertex of $A$ into two copies with the same neighbors outside
$A$, one for each path. We obtain $\ell$ disjoint paths, and minimum
total length is preserved. A shorter family of $\ell$ disjoint paths
from the copies to $B$ would form a shorter double fan after replacing
the copies by the original vertices, a contradiction. The graph induced
by the internal vertices and the number of neighbors of each outside
vertex on each path are unchanged.
\end{remark}

We recall the definitions of linkages and linked graphs.

\begin{definition}\label{def:linkage}
Let $G$ be a graph, let $k\ge0$ be an integer, and let
$\mathcal S=\{(s_i,t_i)\}_{i\in[k]}$ be a set of pairs of vertices
of $G$ such that $s_i\ne s_j$, $t_i\ne t_j$ and $s_i\ne t_j$
whenever $i\ne j$.
An \emph{$\mathcal S$-linkage} in $G$ is a family
$\calP=\{P_i\}_{i\in[k]}$ of pairwise vertex-disjoint paths in $G$
such that $P_i$ has ends $s_i$ and $t_i$ for every $i\in[k]$.
If $s_i=t_i$, the path $P_i$ consists of the single vertex $s_i$.
For $k=0$, both $\mathcal S$ and $\calP$ are empty.
\end{definition}

\begin{definition}\label{def:linked}
For an integer $k\ge1$, a graph $G$ is \emph{$k$-linked} if
$\mathsf{v}(G)\ge2k$ and,
for every set $\mathcal S=\{(s_i,t_i)\}_{i\in[k]}$ of pairs of vertices of $G$
with $s_i\ne s_j$, $t_i\ne t_j$ and $s_i\ne t_j$ whenever $i\ne j$,
$G$ contains an $\mathcal S$-linkage.
\end{definition}

Thomas and Wollan~\cite{ThomasWollan2005} improved the connectivity bound
of Bollob\'as and Thomason~\cite{BollobasThomason1996} as follows.

\begin{lemma}[\cite{ThomasWollan2005}]\label{thm:linked}
For every integer $k\ge1$, every $10k$-connected graph is $k$-linked.
\end{lemma}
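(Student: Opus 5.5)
The statement immediately above is Lemma~\ref{thm:linked}, the theorem of Thomas and Wollan that every $10k$-connected graph is $k$-linked. In the paper it is quoted from~\cite{ThomasWollan2005} and not reproved. A self-contained proof is substantial, so I would follow the Thomas--Wollan strategy: reduce linkedness to a density statement about a minimal counterexample. The first step is the \emph{$(2k,m)$-structure} reformulation. Fix the $2k$ terminals as a set $X$, and call $(G,X)$ \emph{linked} if every admissible pairing of the terminals in $X$ is realized. One then proves a stronger statement: if $G$ is $2k$-connected and $\mathsf e(G)\ge5k\,\mathsf v(G)$, counting only edges with at least one end outside $X$ (or a similar density normalized on $V(G)\setminus X$), then $G$ is $k$-linked with respect to $X$. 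The lemma follows because a $10k$-connected graph has minimum degree at least $10k$, so $\mathsf d(G)\ge5k$, and it is certainly $2k$-connected.

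I would prove the density version by a minimal counterexample $(G,X)$, minimal under deletion and contraction of edges not inside $G[X]$. Minimality yields the structural facts one at a time. Each edge $uv$ with $u\notin X$ lies in at least about $5k-1$ triangles, since otherwise contracting it preserves the density condition, and a linkage in $G/uv$ lifts to $G$. Further, $G$ has no separation $(A,B)$ of order less than $2k$ with $X\subseteq A$ and $B\setminus A$ nonempty. Otherwise we replace $B$ by a clique on $A\cap B$ and check, by density accounting on the two sides, that one side still satisfies the hypothesis. A linkage found there then reroutes through $B$ using Menger's theorem (Lemma~\ref{lem:menger}) applied across the separator.

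The main step uses a vertex $v\notin X$ of minimum degree, which is at most $10k$ by the density bound. Its neighborhood has minimum degree at least about $5k-1$ because of the triangle counts. Inside $G[N(v)]$ one finds either a large complete-like structure or a dense, highly connected piece. Using the no-small-separation property, this gives $2k$ disjoint paths from $X$ into $N(v)$, and these paths can be paired up inside the dense neighborhood to form the linkage. This is the step I expect to be the main obstacle. Getting the constant $10$ rather than something larger requires the careful extremal analysis of Thomas and Wollan: they show that the neighborhood of $v$ is essentially a clique minus a matching, and they route within it. My fallback would be the Bollob\'as--Thomason argument~\cite{BollobasThomason1996}, which finds a $K_{3k}$ minor via density and routes the pairs through its branch sets. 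That argument gives $22k$-connectivity, which would only change an absolute constant in the later use of the lemma.
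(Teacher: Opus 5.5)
The paper only cites Thomas and Wollan for this lemma, so quoting it is all that is needed there. As a self-contained proof, though, your outline has a genuine gap exactly at the step you flag, and the gap is not routine. Minimality gives you only $\mathsf v(G[N(v)])\le10k$ and $\delta(G[N(v)])\ge5k-1$, i.e.\ minimum degree barely above half the order. Nothing you derive makes the neighbourhood `essentially a clique minus a matching'; its complement may have degrees of order $k$. Such a neighbourhood, even with $v$ as an extra hub, need not allow the pairing. For $k\ge3$, let $G[N(v)]$ be two copies of $K_{5k+1}$ sharing a set $S$ of $2k-4$ vertices. It has $8k+6\le10k$ vertices and minimum degree $5k$. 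Suppose the ends of your $2k$ paths from $X$ happen to lie so that $k-2$ pairs fill $S$ and each of the two remaining pairs has one end in each private part. Then once one crossing pair is routed through $v$, the other cannot be joined inside $G[N(v)\cup\{v\}]$. So pairing the paths `inside the dense neighbourhood' fails as stated. A correct argument must route through the rest of the minimal counterexample or exploit its minimality much more strongly. That is the actual content of the Thomas--Wollan proof, and your proposal does not supply it.

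Two other steps are also off. First, the fallback is misdescribed. For large $k$, connectivity linear in $k$ does not force a $K_{3k}$ minor, since the order $t\sqrt{\ln t}$ in Theorem~\ref{thm:density} is best possible (random graphs). Routing through a forced $K_{3k}$ minor therefore gives only $O(k\sqrt{\ln k})$-connectivity. Bollob\'as and Thomason instead use a dense but non-complete minor, and their bound is $22k$, not the $10k$ of the statement. Second, in the reduction the hypothesis that $G$ is $2k$-connected is not preserved by contraction. The minimal counterexample must therefore be taken in a class with a rooted condition: no separation $(A,B)$ of order less than $2k$ with $X\subseteq A$ and $B\setminus A\ne\varnothing$. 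The separations that really arise are then those of order exactly $2k$ created by contracting or deleting an edge. For these, knowing that one side satisfies the density hypothesis is not enough. The accounting balances only with the normalisation $\mathsf e(G)-\mathsf e(G[X])\ge5k|V(G)\setminus X|$, since the two sides together have $\mathsf v(G)+2k$ vertices. Moreover, if the good side is $G[A]$ plus a clique on $A\cap B$, lifting its linkage requires realising the clique edges it uses by disjoint paths through $B$. That is a linkage property of $G[B]$ which you have not established.
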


\begin{definition}\label{def:model}\label{def:rooted-model}
For graphs $H$ and $G$, an \emph{$H$ model in $G$} is a collection
$\calM=\{B_v:v\in V(H)\}$ of pairwise vertex-disjoint nonempty
connected subgraphs of $G$ such that, for every $uv\in E(H)$,
some edge of $G$ has one end in $V(B_u)$ and the other in $V(B_v)$.
The subgraphs $B_v$ are the \emph{branch sets} of $\calM$.
Let $R\subseteq V(G)$ have size $\mathsf{v}(H)$.
The model $\calM$ is \emph{rooted at $R$} if
$|V(B_v)\cap R|=1$ for every $v\in V(H)$.
\end{definition}

A graph $G$ contains an $H$ minor if and only if $G$ contains an $H$ model.
The following rooted-minor result is due to Kawarabayashi~\cite{Kawarabayashi2007}.

\begin{lemma}[\cite{Kawarabayashi2007}]\label{lem:rooted-minor}
Let $p\ge1$ be an integer. If a $p$-connected graph $G$ contains
a $K_{2p}$ minor, then for every set $R\subseteq V(G)$ of size $p$,
$G$ contains a $K_p$ model rooted at $R$.
\end{lemma}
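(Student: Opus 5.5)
This lemma is quoted from Kawarabayashi~\cite{Kawarabayashi2007}, so in the paper it is simply cited. If I had to prove it, I would argue directly from Menger's theorem (Lemma~\ref{lem:menger}) and the slack between $K_{2p}$ and $K_p$.

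\emph{Setup.} Fix a $K_{2p}$ model $\{B_1,\ldots,B_{2p}\}$ in $G$ and let $W=\bigcup_iV(B_i)$. Since $|W|\ge2p\ge p=|R|$ and $G$ is $p$-connected, Menger's theorem gives $p$ disjoint $(R,W)$-paths, where a vertex of $R\cap W$ counts as a trivial path. Stop each path at its first vertex of $W$, so that every path is internally disjoint from $W$.

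\emph{Easy case.} Say a branch set is \emph{hit} if it contains the end of some path. If the $p$ ends lie in $p$ distinct branch sets, say $B_1,\ldots,B_p$ with $P_j$ ending in $B_j$, then the sets $B_j\cup P_j$ are connected and pairwise disjoint. Each contains exactly one vertex of $R$, and any two are adjacent because $B_j$ and $B_{j'}$ are. So they form a $K_p$ model rooted at $R$.

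\emph{Main obstacle: several paths ending in the same branch set.} Among all choices of the $2p$ branch sets and the $p$ paths, I would choose one maximizing the number $h$ of hit branch sets, and subject to that minimizing the total length of the paths. Suppose $h<p$. Then at least $2p-h>p$ branch sets are unhit, and these unhit sets are avoided by all paths. Take a branch set $B$ containing ends of two paths $P,P'$. I would try to reroute $P'$ into an unhit branch set: apply Menger between $R$ and $W'=V(P')\cup\bigcup\{V(B_i):B_i\text{ unhit}\}$ in $G$ minus the other paths, keeping the rest fixed. Concretely, I would look for an augmenting path along the paths and branch sets, as in a matching argument.

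If no such augmentation exists, there should be a separator of the form (vertices of fewer than $p$ paths) $\cup$ (hit branch sets). Since branch sets are contracted in the minor, the right notion is a separator in $G$ with the hit branch sets contracted. A separator of order $<p$ in that contracted graph, pulled back to $G$, is not small, so connectivity alone does not finish. Here the clique structure must be used: every unhit branch set is adjacent to every hit one. I would therefore also allow replacing a hit set $B$ together with an unhit set $B'$ by the single connected set $B\cup B'$, which is legitimate because $B$ and $B'$ are adjacent. This uses the factor-$2$ slack, since we can afford to merge up to $p$ pairs. Merging lets two paths ending in $B$ be separated: the path ends are split between $B$ and $B'$, and the remaining unhit sets keep every merged set adjacent to all others.

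Making this merging rerouting precise is the step I expect to be hardest, in particular splitting $B\cup B'$ so that both parts remain connected and adjacent to everything. If it resists, I would simply invoke~\cite{Kawarabayashi2007} as the paper does.
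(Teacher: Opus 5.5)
The paper does not prove this lemma; it imports it from Kawarabayashi, so your fallback of citing it is exactly what the paper does. The self-contained argument you sketch, however, has a genuine gap. It settles only the case where the $p$ paths already end in distinct branch sets. The case of several ends in one branch set, which is the entire content of the lemma, is never resolved.

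\emph{Fixing the model cannot work.} With the model fixed, no augmenting argument can succeed, because the required linkage need not exist. Let $Q$ be a clique of order $p$, joined completely to a clique $Q'$ of order $2p-1$ and to an independent set $R$ of order $p$. Since $|Q|=p$ and every vertex of $Q$ is universal, deleting fewer than $p$ vertices leaves a universal vertex, so $G$ is $p$-connected. The subgraph $G[Q]$ together with the single vertices of $Q'$ is a $K_{2p}$ model. Yet every $(R,W)$-path is a single edge into $Q$. (A rooted $K_p$ exists here, with branch sets $\{r_i,q_i\}$, but only for a different model.) Your Menger step in $G$ minus the other paths fails for the same reason: once $p-1$ paths of arbitrary length are deleted, $p$-connectivity guarantees nothing. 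Hence your extremal choice can only be contradicted by an operation that changes the branch sets.

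\emph{The proposed change does not work.} Merging a hit set $B$ with an adjacent unhit set $B'$ leaves both ends inside $B\cup B'$. Moving one of them into $B'$ is precisely the rerouting you have not established. Cutting $B$ itself fails too. Suppose $G[V(B)]$ is a star with centre $y$ whose leaves $x_1,x_2,x_3$ are path ends, and every edge from $V(B)$ to $W\setminus V(B)$ is incident with $y$. Then at most one part contains $y$, and every other part is a single leaf whose only neighbour in $W$ is $y$. No regrouping inside $W$ makes such parts adjacent to each other or to an unhit branch set. The ends must be moved by rerouting paths through $V(G)\setminus W$, using $p$-connectivity while simultaneously modifying the model. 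That is the missing idea, and your sketch gives no mechanism or potential function that achieves it.
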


The following definition combines a rooted complete model with a linkage.

\begin{definition}\label{def:woven}
Let $p\ge1$ and $q\ge0$ be integers. A graph $G$ with at least
$p+2q$ vertices is \emph{$(p,q)$-woven} if the following holds.
For every three sets $R=\{r_1,\ldots,r_p\}$,
$S=\{s_1,\ldots,s_q\}$ and $T=\{t_1,\ldots,t_q\}$ in $V(G)$
with $|R|=p$, $|S|=|T|=q$ and $s_i\ne t_j$ whenever $i\ne j$,
there are a $K_p$ model $\calM$ in $G$ rooted at $R$ and a
$\{(s_i,t_i)\}_{i\in[q]}$-linkage $\calP$ in $G$ such that
\[
 V(\calM)\cap V(\calP)=R\cap(S\cup T).
\]
\end{definition}

The set $R$ may meet $S\cup T$. When $q=0$, the path family $\calP$ is empty.
Delcourt and Postle~\cite{DelcourtPostle2025} proved the following result as Lemma~5.13
in their paper.

\begin{lemma}[\cite{DelcourtPostle2025}]\label{thm:routing}
There is an absolute integer
$C=\cst{thm:routing}\ge\max\{10,\cst{thm:knit}\}$ such that,
for all integers $p,q\ge2$, every graph $G$ with
$\kappa(G)\ge C(p\sqrt{\ln p}+q)$ is $(p,q)$-woven.
\end{lemma}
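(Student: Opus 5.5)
The plan is to split the work into two tasks. The first task produces an \emph{unrooted} complete model with a controlled interaction with the rest of $G$. The second task routes everything using the knit property of Lemma~\ref{thm:knit}, with the model deleted. Let $k=\kappa(G)\ge C(p\sqrt{\ln p}+q)$ and $W=R\cup S\cup T$, so $|W|\le p+2q$.

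\textbf{Step 1: a cheap model avoiding $W$.} The graph $G\setminus W$ has minimum degree at least $k-p-2q$. For $C$ large, the Kostochka--Thomason density bound therefore gives a $K_{p}$ minor in $G\setminus W$; I would in fact ask for a $K_{2p}$ minor, to leave slack. Among all $K_p$ models $\calM_0=\{B_1,\ldots,B_p\}$ in $G\setminus W$, choose one minimizing $\mathsf v(\calM_0)$. Each branch set may be taken to be a tree, and its size is forced to be small relative to $k$. The key structural claim I would aim for is an analogue of Lemma~\ref{lem:short-fan}. By minimality, every vertex $v\notin V(\calM_0)$ has only $O(1)$ neighbours in each $B_j$, since otherwise $v$ could be used to shortcut $B_j$. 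Moreover, $G[V(\calM_0)]$ is $O(p)$-degenerate.

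\textbf{Step 2: routing by knitting.} Pick one vertex $x_j\in V(B_j)$ for each $j$. Consider the graph
\[
 G'=G\setminus\bigl(V(\calM_0)\setminus\{x_1,\ldots,x_p\}\bigr).
\]
Apply Lemma~\ref{thm:knit} to $G'$ with the partition of the $p+2q$ terminals into the parts $\{r_j,x_j\}$ ($j\in[p]$) and $\{s_i,t_i\}$ ($i\in[q]$). Where $r_j\in S\cup T$, the two demands are merged into one part, which is exactly why the definition of $(p,q)$-woven allows $V(\calM)\cap V(\calP)=R\cap(S\cup T)$. The resulting disjoint connected subgraphs give the paths of $\calP$ and connectors $D_j\ni r_j,x_j$. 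Setting $B_j'=B_j\cup D_j$ yields a $K_p$ model rooted at $R$ and disjoint from $\calP$ outside $R\cap(S\cup T)$. For this to work, $G'$ must be $\cst{thm:knit}(p+2q)$-connected.

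\textbf{Main obstacle.} The hard part is guaranteeing this connectivity of $G'$. Deleting $V(\calM_0)$ can destroy connectivity if $\calM_0$ is large, and $p\sqrt{\ln p}$-density models can have many vertices. My first attempt would be to use the sparsity from Step 1. Since each outside vertex has $O(p)$ neighbours in $\calM_0$, the minimum degree drops only by $O(p)$. Then I would certify connectivity via a separator argument. Suppose $X$ with $|X|<\cst{thm:knit}(p+2q)$ separated $G'$. Then $X\cup V(\calM_0)$ separates $G$, and one shows using Menger (Lemma~\ref{lem:menger}) and the degeneracy of $G[V(\calM_0)]$ that one side can be rerouted around $\calM_0$, contradicting $\kappa(G)\ge k$.

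If this fails, the fallback is to give up on full connectivity of $G'$. Instead one would take a highly connected piece of $G'$ via Lemma~\ref{thm:bad-connected}-style extraction (in its ordinary version, Lemma~\ref{thm:local-ordinary-extraction}). The terminals would then be joined to that piece by disjoint fans from Lemma~\ref{lem:double-fan} and Lemma~\ref{lem:join-fans}, and the knit property would be applied only inside that piece. The $\sqrt{\ln p}$ factor is spent only in Step 1, while the $q$ term pays for the linkage demands in Step 2.
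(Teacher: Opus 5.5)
\textbf{There is a genuine gap, exactly where you flag it.} The paper does not prove this lemma; it imports it from Delcourt and Postle (their Lemma~5.13). So your sketch has to stand on its own, and it does not.

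Everything rests on Step~2 needing $G'$ to be $\cst{thm:knit}(2p+2q)$-connected. (There are $2p+2q$ terminals once the $x_j$ are counted, not $p+2q$.) Nothing in the proposal establishes this.
\begin{itemize}
\item \emph{The Step~1 sparsity claims do not follow from minimality of $\mathsf v(\calM_0)$.} The shortcut behind Lemma~\ref{lem:short-fan} works because a path only has to join its two ends. A branch set must touch all the other branch sets. If every vertex of $B_j$ is the only contact of $B_j$ with some other branch set, an outside vertex adjacent to all of them gives no smaller model.
\item \emph{The branch sets are not forced to be small relative to $k$.} Combining Erd\H{o}s--Sachs graphs with Mader's theorem gives $k$-connected graphs whose girth exceeds their maximum degree by any amount. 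In such a graph every cycle, and hence every $K_p$ model with $p\ge3$, has more than $\kappa(G)$ vertices, indeed arbitrarily many more.
\item \emph{The separator argument therefore gives no contradiction.} The set $X\cup V(\calM_0)$ may separate $G$ without violating $\kappa(G)\ge k$. Degeneracy or neighbour-count bounds can at best control $\delta(G')$, which says nothing about $\kappa(G')$.
\item \emph{The fallback is circular.} The fans from the terminals and the $x_j$ to an extracted highly connected piece must avoid $\calM_0$, so they must be found in $G'$. Menger's theorem or Lemma~\ref{lem:double-fan} there needs precisely the absence of small separators in $G'$ that is missing. (Lemma~\ref{thm:bad-connected} concerns bad list assignments and does not apply here.)
\item \emph{A smaller defect in the merging step.} When $r_j=s_i$, a connected subgraph containing $r_j,x_j,t_i$ need not contain an $(s_i,t_i)$-path and an $(r_j,x_j)$-path meeting only at $r_j$. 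You should first replace such roots by private neighbours, as in Claim~\ref{clm:disjoint-roots}.
\end{itemize}

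\textbf{Where a new idea is actually needed.} After the private-neighbour reduction, the two extreme ranges follow from tools already in the paper.
\begin{itemize}
\item If $q\le p$ and $C$ is large, Theorem~\ref{thm:density} yields a $K_{2(p+2q)}$ minor. Lemma~\ref{lem:rooted-minor} then gives a $K_{p+2q}$ model rooted at a set $Z\supseteq R\cup S\cup T$. The branch sets at $R$ form the required model, and each pair $(s_l,t_l)$ is joined inside the union of its two branch sets. This is exactly the argument at the start of the proof of Lemma~\ref{thm:recursive}.
\item If $q\ge p^2$, choose distinct private neighbours $x_{i,j}$ ($j\ne i$) of each $r_i$. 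Apply Lemma~\ref{thm:linked} in $G\setminus R$ to the $\binom p2+q$ pairs $(x_{i,j},x_{j,i})$ and $(s_l,t_l)$; for $C\ge16$, $G\setminus R$ has enough connectivity. Then split each $(x_{i,j},x_{j,i})$-path between the $i$th and $j$th branch sets.
\end{itemize}
In between, the hypothesis guarantees neither a clique minor of order $\Theta(p+q)$ nor linkage capacity of order $p^2$. A minimum-order model is also not harmless to delete. This intermediate range is where a genuinely new argument is required, and your proposal supplies no mechanism for it.
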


The following lemma of Delcourt and Postle~\cite{DelcourtPostle2025} (Lemma~5.15 in their paper)
allows us to find a rooted $K_p$ model in a $(p,q)$-woven subgraph and
reroute a given linkage to avoid the model except at the roots.

\begin{lemma}[\cite{DelcourtPostle2025}]\label{lem:reroute}
Let $p,q\ge1$ be integers, and let $G$ be a graph.
Let $S=\{s_1,\ldots,s_q\}$ and $T=\{t_1,\ldots,t_q\}$ be disjoint
sets of $q$ vertices in $G$, and let $\calP$ be an
$\{(s_i,t_i)\}_{i\in[q]}$-linkage in $G$.
Suppose that $H$ is a $(p,q)$-woven subgraph of $G$ and
$R=\{r_1,\ldots,r_p\}\subseteq V(H)\cap(S\cup T)$ has size $p$.
Then there are an $\{(s_i,t_i)\}_{i\in[q]}$-linkage $\calP'$ in $G$
and a $K_p$ model $\calM$ in $H$ rooted at $R$ such that
$V(\calP')\cap V(\calM)=R$ and $V(\calP')\subseteq V(H)\cup V(\calP)$.
\end{lemma}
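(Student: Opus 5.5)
The plan is to reduce the statement directly to the definition of $(p,q)$-woven by replacing, for each path of $\calP$, its whole interaction with $H$ by a single pair of terminals in $H$. For each $i\in[q]$ such that $P_i$ meets $V(H)$, let $a_i$ be the first vertex of $P_i$ (traversed from $s_i$ to $t_i$) lying in $V(H)$, and let $b_i$ be the last such vertex. Let $P_i^-$ be the subpath from $s_i$ to $a_i$, and let $P_i^+$ be the subpath from $b_i$ to $t_i$. By construction, $P_i^-$ and $P_i^+$ meet $V(H)$ only in $a_i$ and $b_i$, respectively, and they are disjoint from each other unless $a_i=b_i$. Since the paths of $\calP$ are pairwise disjoint, all the vertices $a_i,b_j$ are distinct except for possible coincidences $a_i=b_i$. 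The definition allows these coincidences, because it only requires $s_i\ne t_j$ for $i\ne j$. Moreover, every root $r\in R$ lies in $V(H)\cap(S\cup T)$. If $r=s_i$, then $s_i\in V(H)$, so $a_i=s_i$; if $r=t_i$, then similarly $b_i=t_i$. Hence $R\subseteq\{a_i,b_i\}$.

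Let $I$ be the set of indices $i$ for which $P_i$ meets $H$. If $|I|<q$, pad the family with dummy pairs $(c,c)$, choosing each $c$ to be a vertex of $H$ outside $R\cup\{a_i,b_i:i\in I\}$, with distinct dummies. There are enough such vertices, since at most $p+2|I|$ vertices are used and $\mathsf v(H)\ge p+2q$. (If this count is tight, I would first try to count more carefully, using that $R\subseteq\{a_i,b_i\}$ so at most $2|I|$ vertices are used.) Apply the $(p,q)$-woven property of $H$ to $R$, to the sources $\{a_i\}_{i\in I}$ together with the dummies, and to the corresponding sinks. This gives a $K_p$ model $\calM$ in $H$ rooted at $R$ and a linkage $\{Q_i\}$ in $H$ with
\[
V(\calM)\cap V(\{Q_i\})=R\cap\bigl(\{a_i,b_i\}_{i\in I}\cup\{\text{dummies}\}\bigr)=R.
\]

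Now define $P'_i=P_i^-\cup Q_i\cup P_i^+$ for $i\in I$, and $P'_i=P_i$ for $i\notin I$; the dummy paths are discarded. Each $P'_i$ is a path from $s_i$ to $t_i$. Indeed, $Q_i\subseteq H$ meets $P_i^\pm$ only at $a_i$ and $b_i$, and $P_i^-$ and $P_i^+$ are disjoint apart from the case $a_i=b_i$. Different $P'_i$ are disjoint for three reasons: the outer segments come from disjoint paths of $\calP$, the paths $Q_i$ are disjoint in $H$, and an outer segment of $P_j$ meets $H$ only at $a_j$ or $b_j$, which are not on $Q_i$ for $i\ne j$. Since $\calM\subseteq H$, the outer segments meet $\calM$ at most in $a_i,b_i$, and these lie in $\calM$ only when they are roots. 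The paths $P_i$ with $i\notin I$ avoid $H$ entirely. Therefore $V(\calP')\cap V(\calM)=R$, and clearly $V(\calP')\subseteq V(H)\cup V(\calP)$.

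The main obstacle is only the bookkeeping: checking that the terminal sets meet the hypotheses of Definition~\ref{def:woven}, including the padding to exactly $q$ pairs and the verification that $R\subseteq\{a_i,b_i\}$, and checking that the concatenations are genuine paths. No deeper tool is needed beyond the wovenness of $H$.
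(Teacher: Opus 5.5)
Your proof is correct. Cutting each path at its first and last vertices in $H$, padding with trivial pairs $(c,c)$, applying the $(p,q)$-woven property of $H$, and splicing the outer segments back on is the standard argument for this lemma; the paper does not reprove it and simply cites it as Lemma~5.15 of Delcourt and Postle. Your hesitation about the padding count is unnecessary, since $\mathsf v(H)-p-2|I|\ge2(q-|I|)\ge q-|I|$ always leaves enough dummy vertices.
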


\section{A separation lemma}\label{sec:separation}

In this section, we show how to find two vertex-disjoint subgraphs
with large bad sublists of a given bad list assignment.

\begin{definition}\label{def:separable}
Let $k\ge0$ be an integer, let $G$ be a graph, and let $L$ be a bad
list assignment on $G$.
We say that $L$ is \emph{$k$-separable in $G$}
if there are vertex-disjoint subgraphs $H_1,H_2$ of $G$ with bad sublist
assignments $L_1,L_2\subseteq L$, respectively, satisfying
$|L_i|\ge|L|-k$ for $i\in[2]$.
\end{definition}

We next show that a sufficiently large bad list assignment on a
$K_t$-minor-free graph $G$ is $k$-separable in $G$ for some $k=O(t)$.
The construction follows the proof of Lemma~5.4 in~\cite{Postle2020}.

After increasing $\tst{thm:local-list}$ if necessary, the following holds.

\begin{lemma}\label{lem:separation}
There is an absolute constant $C=\cst{lem:separation}$ such that,
for every integer $t\ge \tst{thm:local-list}$, every bad list assignment $L$ on a
$K_t$-minor-free graph $G$ with $|L|\ge2Ct$
is $Ct$-separable in $G$.
\end{lemma}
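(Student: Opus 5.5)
We follow the scheme of Lemma~5.4 in~\cite{Postle2020}, using Lemma~\ref{lem:deletion} (built on Lemma~\ref{thm:local-list}) wherever Postle pays a superlinear price. Suppose for contradiction that $L$ is bad on $G$ with $|L|\ge2Ct$ but is not $Ct$-separable; we will build a $K_t$ model. The constant $C$ will be chosen large compared with $\cst{lem:deletion}$, $\cst{thm:small-connected}$ and $\cst{thm:routing}$.

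\textbf{Step 1: small highly connected pieces.} Set $k=c_0t$ for a suitable absolute $c_0$, large enough that $k\ge\cst{thm:routing}(p\sqrt{\ln p}+q)$ for the parameters below. By Lemma~\ref{cor:many-pieces}, $G$ contains $m=\ceil{\ln t}+1$ disjoint induced $k$-connected subgraphs $K_1,\ldots,K_m$, each on at most $\cst{thm:small-connected}^2t(\ln t)^3$ vertices. Let $X=\bigcup_jV(K_j)$, so $|X|\le t(\ln t)^5$. By Lemma~\ref{lem:deletion} with $Y=\varnothing$, $G\setminus X$ has a bad sublist of size at least $|L|-O(t)$. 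By Lemma~\ref{thm:bad-connected}, we then obtain an induced highly connected subgraph $G'\subseteq G\setminus X$ carrying a bad sublist $L'$ with $|L'|\ge|L|-O(t)$.

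\textbf{Step 2: routing between the pieces and $G'$.} We now use non-separability. If some vertex set of size $O(t)$ separated $X$ from $G'$, we would delete it (losing only $O(t)$ colours) and obtain two disjoint parts: $G'$, and the part containing the pieces. The latter also carries a large bad sublist, obtained by deleting $G'$ using Lemma~\ref{lem:deletion} in the appropriate form. This would contradict non-separability. Hence, by Lemma~\ref{lem:menger}, there are many disjoint paths between the pieces and $G'$, and by Lemma~\ref{lem:double-fan} we obtain double fans from roots in each $K_j$.

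Take such path families of minimum total length. Lemma~\ref{lem:short-fan} and Remark~\ref{rem:short-double-fan} then say that the union $Y$ of their interior vertices induces an $O(t)$-degenerate graph, and every outside vertex has $O(t)$ neighbours in $Y$. So Lemma~\ref{lem:deletion} applies with $d=O(t)$, which lets us delete $X\cup Y$ at cost $O(t)$ in the list size. Iterating inside $G'$ produces a further highly connected bad subgraph together with linkages, again by non-separability. Lemma~\ref{lem:join-fans} combines the fans from different pieces into a single disjoint path system.

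\textbf{Step 3: assembling the $K_t$ minor.} Each $K_j$ is $k$-connected with $k\ge\cst{thm:routing}(p\sqrt{\ln p}+q)$, so by Lemma~\ref{thm:routing} it is $(p,q)$-woven for $p\approx t/\ln t$. Lemma~\ref{lem:reroute} then yields in each piece a $K_p$ model rooted at the path ends, with the linkage rerouted to avoid the model except at the roots. The $m\approx\ln t$ rooted models have $\approx t$ branch sets in total. The final highly connected part is $\cst{thm:knit}(\cdot)$-connected, hence knit by Lemma~\ref{thm:knit}, and we use it to join the branch sets across different pieces into $t$ connected sets with all cross adjacencies. This gives a $K_t$ minor, a contradiction.

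\textbf{Main obstacle.} The hard part is the bookkeeping in Step 2. Every deletion must cost only $O(t)$ colours, and this is exactly where the small-set bound Lemma~\ref{thm:local-list} (through Lemma~\ref{lem:deletion}) replaces Postle's lossy bound. At the same time, the number of paths needed must stay $O(t)$, so the woven and knit connectivity requirements remain linear in $t$.
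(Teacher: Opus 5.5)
Your proposal has a genuine gap in Step~3: the assembly you describe cannot produce a $K_t$ model. A knit graph only supplies pairwise disjoint connectors and guarantees no edges between different final branch sets. So final sets $F_\ell$ and $F_{\ell'}$ are adjacent only if they contain distinct branch sets of a common local model. Let $S_\ell$ be the set of pieces whose model contributes to $F_\ell$. The sets $S_\ell$ must pairwise intersect. Each of the other $t-1$ final sets contains one of the at most $|S_\ell|(p-1)$ branch sets of models in $S_\ell$ not used by $F_\ell$, and these are distinct. Hence $|S_\ell|\ge(t-1)/(p-1)$ and $\sum_\ell|S_\ell|\ge t(t-1)/(p-1)$. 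With $p\approx t/\ln t$ this is about $t\ln t$, but your $m\approx\ln t$ models have only $mp\approx t$ branch sets in total.

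Enlarging the models does not rescue a one-shot assembly with these tools. With linear connectivity, Lemma~\ref{thm:routing} allows at most $p=O(t/\sqrt{\ln t})$. The final part must then join $\sum_\ell|S_\ell|\gtrsim t\sqrt{\ln t}$ terminals. Lemma~\ref{thm:knit} only guarantees this under connectivity $\Omega(t\sqrt{\ln t})$, which would mean a superlinear list loss in Lemma~\ref{thm:bad-connected}.

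The missing idea is the paper's multi-round construction. Set $p'=\lceil\sqrt{\ln(2t)}\rceil$ and $q'=\lceil t/p'\rceil$ (the paper's $p,q$). Round $r$ works inside a bad, highly connected carrier $H_r$ that meets each of the $rq'$ old branch sets in exactly one vertex. Inside $H_r$ it finds pieces $J_1,\dots,J_{p'}$ carrying rooted $K_{2q'}$ models, which are woven since $2q'\sqrt{\ln(2q')}\le2p'q'\le4t$, together with one knit piece $D$. Each of the $q'$ new branch sets takes a local branch set from every $J_i$, and each old one takes one from a single $J_s$. Consequently $D$ only has to connect at most $8t$ terminals.

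Your sketch misses two further points.

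First, ``iterating inside $G'$'' would lose $O(t)$ colours per round from the current sublist, hence $\Theta(t\sqrt{\ln t})$ overall. The paper avoids this with invariants (C1)--(C6). They maintain a set $U_r$ of size at most $t(\ln t)^5$, kept small by applying Lemma~\ref{lem:bipartite} to vertices with neighbours on at least $2t$ old paths. They also maintain a $100t$-degenerate path remainder in which each outside vertex has at most $12t$ neighbours. Together these let every round reapply Lemma~\ref{lem:deletion} to the \emph{original} $L$ on $G$, so the loss stays $k/2$ even though the number of accumulated paths is superlinear.

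Second, your use of non-separability in Step~2 is not valid. Lemma~\ref{lem:deletion} deletes only a set of size at most $t(\ln t)^5$ plus a degenerate set with bounded attachments, so it cannot delete the large graph $G'$. Moreover, the side of a small separator containing your pieces need not carry any bad sublist. The paper instead chooses the pieces inside $H_r$ and compares $H_r$ with the new carrier $H_{r+1}$. If they share at most $N_0t$ vertices, deleting the overlap yields two disjoint bad subgraphs, a contradiction. So the overlap is large, and Menger's theorem in $H_r\setminus R$ gives the required disjoint paths to $D$.
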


\begin{proof}
\phantomsection\label{par:separation-constants}
We take
\begin{equation}\label{eq:separation-parameters}
 N_0=\ceil{100\cst{thm:small-connected}^2\cst{lem:bipartite}\cst{thm:routing}},\quad
 C=100\cst{lem:deletion}N_0^2.
\end{equation}
With $N_0$ fixed, increase the threshold $\tst{thm:local-list}$ from
Lemma~\ref{thm:local-list} so that $\ln t\ge16N_0^2$
for every $t\ge \tst{thm:local-list}$.

Let $k=Ct$, and suppose that $L$ is not
$k$-separable in $G$. By hypothesis, $|L|\ge2k$.
Since $N_0\ge1000$ and $\cst{lem:deletion}\ge1$,
\begin{equation}\label{eq:separation-list-budget}
 k=100\cst{lem:deletion}N_0^2t\ge16N_0t.
\end{equation}

Let $p=\ceil{\sqrt{\ln(2t)}}$ and $q=\ceil{t/p}$.
Since $t\ge3$, we have $p\le\min\{2\ln t,t\}$, $q\le t$ and
$t\le pq<t+p\le2t$.

For $r=0,\ldots,p$, we recursively construct a tuple
\[
 (H_r,L_r,\calM_r,\calT_r^1,\calT_r^2,U_r)
\]
satisfying the following conditions.
\begin{enumerate}[label=(C\arabic*),ref=(C\arabic*),leftmargin=3em]
\item\label{inv:carrier}
$H_r$ is an induced $N_0t$-connected subgraph of $G$ with a bad
sublist $L_r\subseteq L$ satisfying $|L_r|\ge|L|-k/2$.
\item\label{inv:tangent}
$\calM_r=(B_r^1,\ldots,B_r^{rq})$ is a $K_{rq}$ model in $G$.
For each $i\in[rq]$, $H_r$ and $B_r^i$ have exactly one common vertex, denoted by $w_r^i$.
Let $W_r=\{w_r^1,\ldots,w_r^{rq}\}$; for $r=0$, the model and $W_0$ are empty.
\item\label{inv:paths}
$\calT_r^1$ and $\calT_r^2$ are path families in $G$ avoiding $H_r$.
The paths in each family are pairwise disjoint, and
$|\calT_r^1|+|\calT_r^2|\le16tr$.
\item\label{inv:size}
$U_r\subseteq V(G)$ satisfies $|U_r|\le t+rN_0t(\ln t)^4$,
$W_r\subseteq U_r$ and $V(\calM_r)\subseteq U_r\cup V(\calT_r^1\cup\calT_r^2)$.
\item\label{inv:degenerate}
$G[V(\calT_r^1\cup\calT_r^2)\setminus U_r]$ is $100t$-degenerate.
\item\label{inv:neighbors}
Every vertex outside $U_r\cup V(\calT_r^1\cup\calT_r^2)$ has at most three neighbors on
each path in $\calT_r^1\cup\calT_r^2$ and at most $12t$ neighbors in $V(\calT_r^1\cup\calT_r^2)$.
\end{enumerate}

By Inequality~\eqref{eq:separation-list-budget}, $|L|\ge2k\ge4N_0t+1$.
By Lemma~\ref{thm:bad-connected} (with $N_0t$ in place of $k$), there are an
induced $N_0t$-connected subgraph $H_0$ and a bad sublist
$L_0\subseteq L$ with $|L_0|\ge|L|-4N_0t\ge|L|-k/2$.
Let $\calM_0$ be the empty model, let $U_0=W_0=\varnothing$, and let $\calT_0^1=\calT_0^2=\varnothing$.
Then~\ref{inv:carrier}--\ref{inv:neighbors} hold.

Let $0\le r<p$, and suppose that the tuple at stage $r$
has been constructed and satisfies~\ref{inv:carrier}--\ref{inv:neighbors}.
We construct the tuple at stage $r+1$ in six steps
(see Fig.~\ref{fig:separation-step} for an illustration).

\medskip
\noindent\textbf{Step 1.} Reserving subgraphs and paths to extend $\calM_r$ to $\calM_{r+1}$.\par\nobreak
\medskip\nobreak
To form $\calM_{r+1}$, we enlarge the existing $rq$ branch sets and add $q$ new branch sets.
Since $|L_r|\ge3k/2>2t\ge|W_r|$, color $W_r$ with distinct colors
and delete it, removing the colors used on neighbors in $W_r$.
The resulting bad sublist on $H_r\setminus W_r$ has size at least $|L_r|-rq$.
By $|L|\ge2k$, \ref{inv:carrier} and Inequality~\eqref{eq:separation-list-budget}, its size is at least
\[
 |L_r|-rq\ge\frac{3k}{2}-2t\ge k
 \ge8\cst{lem:deletion}\cst{thm:small-connected}N_0t.
\]
Since $p\le\ceil{\ln t}$, Lemma~\ref{cor:many-pieces} (with $N_0t$ in place of $k$) gives pairwise disjoint induced $N_0t$-connected subgraphs $J_1,\ldots,J_p,D$ of $H_r\setminus W_r$ satisfying
\begin{equation}\label{eq:separation-small-pieces}
 \max\{\mathsf v(D),\mathsf v(J_1),\ldots,\mathsf v(J_p)\}
 \le\cst{thm:small-connected}^2t(\ln t)^3.
\end{equation}
Since $\mathsf v(J_i)>N_0t\ge2q$, choose a set $R_i$ of $2q$ vertices in $J_i$ for each $i\in[p]$.
Let $R=W_r\cup\bigcup_{i=1}^pR_i$, so $|R|=rq+2pq\le6t$.
Let
\[
 G'=G\setminus\bigl((V(\calM_r)\setminus W_r)\cup V(\calT_r^1\cup\calT_r^2)\bigr).
\]
By~\ref{inv:tangent} and~\ref{inv:paths}, $H_r\subseteq G'$.
Recall that $D$ is $N_0t$-connected, so $\mathsf v(D)>N_0t\ge2|R|$.
Also, by~\ref{inv:carrier}, $\kappa(H_r)\ge N_0t\ge2|R|$.
By Lemma~\ref{lem:double-fan} in $H_r$ with $A=R$ and $B=V(D)$, there is a double fan from $R$ to $V(D)$ in $G'$.
Choose such a double fan $\calP_1$ with minimum total length in $G'$.
We have $|\calP_1|=2|R|=2rq+4pq\le12t$.

The subgraphs $J_1,\ldots,J_p$ will supply parts of the new branch sets and the enlarged old branch sets.
The graph $D$ and the paths in $\calP_1$ are reserved for joining the parts of these branches.

\medskip
\noindent\textbf{Step 2.} Constructing $H_{r+1}$ and its bad sublist $L_{r+1}$.\par\nobreak
\medskip\nobreak
Let
\[
 X=U_r\cup V(D)\cup\bigcup_{i=1}^pV(J_i),\quad
 Y=V(\calT_r^1\cup\calT_r^2\cup\calP_1)\setminus X.
\]
We verify the conditions of Lemma~\ref{lem:deletion} with $d=100t$.

Recall that $\calP_1$ has minimum total length in $G'$,
$|\calP_1|\le12t$, and $R\cup V(D)\subseteq X$.
By Remark~\ref{rem:short-double-fan},
$G[V(\calP_1)\setminus X]$ is $(24t-1)$-degenerate.
The paths in $\calP_1$ avoid $V(\calT_r^1\cup\calT_r^2)$.
By~\ref{inv:neighbors}, every vertex of $V(\calP_1)\setminus X$
has at most $12t$ neighbors on these old paths.
Delete the vertices of $V(\calP_1)\setminus X$ first in a degeneracy
ordering; each has at most $36t-1$ remaining neighbors in $G[Y]$.
The remaining graph is $100t$-degenerate by~\ref{inv:degenerate},
since $U_r\subseteq X$. Thus $G[Y]$ is $100t$-degenerate.

By~\ref{inv:size} at stage $r$,
$V(\calM_r)\cup V(\calT_r^1\cup\calT_r^2)\subseteq X\cup Y$,
so every vertex outside $X\cup Y$ belongs to $G'$.
By~\ref{inv:neighbors} at stage $r$ and
Remark~\ref{rem:short-double-fan} in $G'$,
every vertex outside $X\cup Y$ has at most $12t+3|\calP_1|\le48t<100t$
neighbors in $Y$.

Note that $p+1\le3\ln t$, $r+1\le p\le2\ln t$ and
$N_0\ge3\cst{thm:small-connected}^2$.
By~\ref{inv:size} at stage $r$ and Inequality~\eqref{eq:separation-small-pieces}, we have
\[
 |X|\le t+rN_0t(\ln t)^4+(p+1)\cst{thm:small-connected}^2t(\ln t)^3
 \le t+(r+1)N_0t(\ln t)^4\le t+pN_0t(\ln t)^4.
\]
By the choice of $\tst{thm:local-list}$, $p\le2\sqrt{\ln t}$ and
$N_0p\le(\ln t)/2$, so $|X|\le t+\tfrac12t(\ln t)^5\le t(\ln t)^5$.
By Inequality~\eqref{eq:separation-list-budget},
\[
 \cst{lem:deletion}(101t+1)
 \le102\cst{lem:deletion}t
 \le2\cst{lem:deletion}N_0t\le\frac k4<|L|.
\]
By Lemma~\ref{lem:deletion} (with $d=100t$),
$G\setminus(X\cup Y)$ has a bad sublist of $L$ of size at least
$|L|-k/4\ge7k/4\ge4N_0t+1$.
By Lemma~\ref{thm:bad-connected} (with $N_0t$ in place of $k$) and
Inequality~\eqref{eq:separation-list-budget},
$G\setminus(X\cup Y)$ has an induced $N_0t$-connected subgraph $H_{r+1}$
with a bad sublist $L_{r+1}\subseteq L$ satisfying
\[
 |L_{r+1}|\ge|L|-\frac k4-4N_0t\ge|L|-\frac k2.
\]

The subgraph $H_{r+1}$ and the bad sublist $L_{r+1}$ satisfy~\ref{inv:carrier}.

\medskip
\noindent\textbf{Step 3.} Joining $R$ and $H_{r+1}$ through $D$.\par\nobreak
\medskip\nobreak
Suppose that $|V(H_r)\cap V(H_{r+1})|\le N_0t$.
Since $|L_{r+1}|\ge3k/2>N_0t$, color and delete this intersection
as in Section~\ref{subsec:bad-lists-connectivity}.
The remaining bad sublist on $H_{r+1}\setminus V(H_r)$ has size at least
\[
 |L_{r+1}|-|V(H_r)\cap V(H_{r+1})|\ge|L|-\frac k2-N_0t\ge|L|-k.
\]
By~\ref{inv:carrier} at stage $r$, $H_r$ also has a bad sublist of $L$
of size at least $|L|-k$.
The subgraphs $H_r$ and $H_{r+1}\setminus V(H_r)$ are disjoint,
so $L$ is $k$-separable in $G$, a contradiction.
Hence $|V(H_r)\cap V(H_{r+1})|>N_0t$.

The disjoint sets $V(D)$ and $V(H_r)\cap V(H_{r+1})$ lie in $H_r\setminus R$ and each has more than $N_0t\ge2(r+1)q$ vertices.
Recall from Step 1 that $|R|\le6t$.
By~\ref{inv:carrier} at stage $r$ and the choice of $N_0$ in Equation~\eqref{eq:separation-parameters},
\[
 \kappa(H_r\setminus R)\ge\kappa(H_r)-|R|\ge(N_0-6)t\ge2(r+1)q.
\]
By Lemma~\ref{lem:menger}, $H_r\setminus R$ contains $2(r+1)q$ disjoint $(V(H_r)\cap V(H_{r+1}),V(D))$-paths.
Since $H_r\subseteq G'$, we choose $\calP_2$ with minimum total length among all families of $2(r+1)q$ disjoint $(V(H_{r+1}),V(D))$-paths in $G'\setminus R$.

Since $H_{r+1}\subseteq G\setminus(X\cup Y)$, the double fan $\calP_1$ avoids $H_{r+1}$.
The paths in $\calP_2$ avoid $R$.
By Lemma~\ref{lem:join-fans} (applied to the sets $R,V(H_{r+1}),V(D)$, with $(r+1)q$ in place of the parameter $r$), the subgraph formed by $\calP_1$ and $\calP_2$ contains a family $\calQ_0$ of
$|R|+(r+1)q=(2r+1)q+2pq\le8t$ disjoint $(R\cup V(H_{r+1}),V(D))$-paths.
Each vertex of $R$ is an end of one path in $\calQ_0$.
The remaining $(r+1)q$ paths in $\calQ_0$ have distinct ends $w_{r+1}^1,\ldots,w_{r+1}^{(r+1)q}$ in $V(\calP_2)\cap V(H_{r+1})$.
Let $W_{r+1}=\{w_{r+1}^1,\ldots,w_{r+1}^{(r+1)q}\}$.
No path in $\calQ_0$ has an internal vertex in $R\cup V(H_{r+1})\cup V(D)$.
Let $\mathcal S$ be the set of pairs of ends of the paths in $\calQ_0$,
with the first entry in $R\cup W_{r+1}$ and the second in $V(D)$.
Then $\calQ_0$ is an $\mathcal S$-linkage.

\begin{figure}[!htb]
\centering
\captionsetup{skip=6pt}
\begingroup
% Inline TikZ for the separation construction; synchronized with separation_step.tex.
\definecolor{branchBlue}{HTML}{1479F5}
\definecolor{branchMagenta}{HTML}{E72C9B}
\definecolor{blueFill}{HTML}{C5DCFC}
\colorlet{magentaFill}{branchMagenta!25}
\definecolor{frameGray}{HTML}{B6C0C9}
\definecolor{Xshade}{HTML}{FFF9E3}
\definecolor{Rshade}{HTML}{11B45B}
\definecolor{newVertexFill}{HTML}{8B45E6}
\definecolor{TOneShade}{HTML}{BDBDBD}
\definecolor{TTwoShade}{HTML}{E5BD53}
\definecolor{auxGray}{HTML}{5E6871}
\definecolor{inactiveBranch}{HTML}{D9D9D9}
\resizebox{\linewidth}{!}{%
\begin{tikzpicture}[x=.6cm,y=-.6cm,
 every node/.style={font=\fontsize{11}{12.5}\selectfont,text=black,inner sep=0pt,outer sep=0pt},
 region/.style={draw=frameGray,line width=.45pt},
 part/.style={draw=black!35,line width=.4pt},
 vertex/.style={circle,fill=white,draw=black,line width=.45pt,inner sep=0pt,minimum size=4pt},
 route/.style={line width=1.5pt,line cap=round,line join=round},
 aux/.style={draw=auxGray,line width=.8pt,dash pattern=on 2.6pt off 1.5pt,line cap=round},
 track one/.style={draw=TOneShade,draw opacity=.58,line width=5.4pt,line cap=round,line join=round},
 track two/.style={draw=TTwoShade,draw opacity=.58,line width=5.4pt,line cap=round,line join=round},
 internal/.style={line width=.9pt,line cap=round,line join=round},
 edge/.style={draw=black!38,line width=.5pt},
 tag/.style={fill=none,inner sep=1.1pt}]
% Natural TikZ bounds allow the visible drawing to use the full text width.

% Preserve the height; widen D and the three J_k while narrowing the right carrier.
\begin{scope}[xshift=-1.08cm]
% The two carriers overlap, while H_{r+1} avoids D, all J_k, and the old model.
\path[fill=black!1] (8.1,2.5) rectangle (36.6,22.2);
\node[tag] (sepHprev) at (14.66,2.5) {$H_r$};
\draw[region] (sepHprev.west)--(8.1,2.5)--(8.1,22.2)--(36.6,22.2)--(36.6,2.5)--(sepHprev.east);
\path[fill=white,fill opacity=.16] (35.7,3.0) rectangle (39.9,16.5);
\node[tag] (sepHnext) at (37.8,3.0) {$H_{r+1}$};
\draw[region] (sepHnext.west)--(35.7,3.0)--(35.7,16.5)--(39.9,16.5)--(39.9,3.0)--(sepHnext.east);

% The old complete model is represented by three pairwise adjacent branches.
% The three connecting edges run outside the colored branch interiors.
\node[tag] (sepMprev) at (5.75,3.6) {$\mathcal M_r$};
\draw[region,dashed] (sepMprev.west)--(3.45,3.6)--(3.45,14.85)--(8.35,14.85)--(8.35,3.6)--(sepMprev.east);
% Use distinct boundary attachments and separated arcs for the three adjacencies.
\pgfmathsetmacro{\sepAdjacentX}{6.75-1.35*sqrt(1-(.8/1.60)^2)}
\pgfmathsetmacro{\sepOuterX}{6.75-1.35*sqrt(1-(.5/1.60)^2)}
\draw[edge] (\sepAdjacentX,6.3)
 .. controls (4.70,6.75) and (4.70,7.75) .. (\sepAdjacentX,8.2);
\draw[edge] (\sepAdjacentX,9.8)
 .. controls (4.70,10.25) and (4.70,11.25) .. (\sepAdjacentX,11.7);
\draw[edge] (\sepOuterX,5.0)
 .. controls (3.65,5.35) and (3.65,12.65) .. (\sepOuterX,13.0);
\foreach \yy/\idx/\col in {5.5/1/inactiveBranch,9.0/i/blueFill,12.5/{rq}/inactiveBranch}{
 \fill[\col] (6.75,\yy) ellipse (1.35 and 1.60);
 \node at (6.65,\yy) {$B_r^{\idx}$};}
\foreach \yy in {7.25,10.75}{\node at (6.75,\yy) {$\vdots$};}
% The B branches touch the carrier at exactly the displayed W vertices.
\path[fill=Rshade,fill opacity=.25] (7.78,3.05) rectangle (9.45,14.3);
\node[tag] (sepWprev) at (9.45,3.8) {$W_r$};
\draw[part] (sepWprev.north)--(9.45,3.05)--(7.78,3.05)--(7.78,14.3)--(9.45,14.3)--(sepWprev.south);
\node[tag] at (8.64,10.0) {$w_r^{i}$};

% D is horizontal. The two colored connected pieces are representative.
\path[fill=Xshade] (11.4643,3.9) rectangle (31.4379,10.8);
\node[tag] (sepD) at (12.4629,3.9) {$D$};
\draw[region] (sepD.west)--(11.4643,3.9)--(11.4643,10.8)--(31.4379,10.8)--(31.4379,3.9)--(sepD.east);
% Single smooth silhouettes taper continuously to the boundary attachments.
% Both the old and the new branch have a root in the next carrier H_{r+1}.
\path[fill=blueFill,draw=none]
 (11.4643,7.35)
 .. controls (12.6475,7.16) and (13.9125,5.78) .. (15.4589,5.78)
 .. controls (17.075,5.78) and (18.225,5.65) .. (21.79,5.35)
 .. controls (25.125,5.10) and (29.15,4.78) .. (31.4379,4.78)
 -- (31.4379,5.22)
 .. controls (29.15,5.27) and (23.86,6.28) .. (19.49,6.85)
 .. controls (16.615,7.07) and (15.235,7.25) .. (14.269,8.33)
 .. controls (13.694,8.98) and (13.7745,9.95) .. (13.7491,10.8)
 -- (13.1741,10.8)
 .. controls (13.165,9.40) and (13.05,8.92) .. (13.4295,8.34)
 .. controls (13.6135,8.06) and (12.314,8.01) .. (11.4643,7.85)
 -- cycle;
\path[fill=magentaFill,draw=none]
 (17.302,10.8)
 .. controls (17.42,9.85) and (19.605,9.03) .. (21.79,8.42)
 .. controls (24.32,7.73) and (26.8,7.05) .. (28.92,6.69)
 .. controls (29.84,6.53) and (30.8,6.57) .. (31.4379,6.57)
 -- (31.4379,7.03)
 .. controls (30.45,7.0) and (28.66,7.57) .. (27.1,8.39)
 .. controls (28.19,8.92) and (29.5525,9.42) .. (29.728,10.8)
 -- (29.153,10.8)
 .. controls (28.92,9.90) and (26.7695,9.66) .. (25.2745,9.84)
 .. controls (24.3545,9.95) and (23.745,10.30) .. (23.7359,10.8)
 -- (23.1609,10.8)
 .. controls (23.1585,10.15) and (23.4575,9.68) .. (23.2735,9.59)
 .. controls (22.135,9.62) and (19.26,9.69) .. (17.877,10.8)
 -- cycle;
\draw[internal,branchBlue] (11.4643,7.6)
 .. controls (12.7957,7.55) and (14.1273,7.05) .. (15.4589,6.8)
 .. controls (19.72,6.57) and (26.5111,5.0) .. (31.4379,5.0);
\draw[internal,branchBlue] (13.4616,10.8)
 .. controls (13.4616,9.2) and (13.1952,8.4) .. (15.4589,6.8);
\draw[internal,branchMagenta] (17.5895,10.8)
 .. controls (18.6548,9.35) and (21.5843,9.6) .. (24.1143,9.25);
\draw[internal,branchMagenta] (23.4484,10.8)
 .. controls (23.1821,10.35) and (24.3139,9.65) .. (24.1143,9.25);
\draw[internal,branchMagenta] (29.4405,10.8)
 .. controls (29.1743,9.6) and (26.5111,9.3) .. (24.1143,9.25);
\draw[internal,branchMagenta] (24.1143,9.25)
 .. controls (26.0,8.16) and (29.44,6.8) .. (31.4379,6.8);
\node at (15.4589,6.12) {$D_i$};
\node at (23.50,8.50) {$D_{rq+j}$};
\node at (20.7852,4.60) {$\cdots$};

% Lower the local models inside the unchanged overall frame.
\begin{scope}[shift={(0,.65)}]
% Three representative J_k, their local models N_k, and their root sets R_k.
% In this representative configuration u_i belongs to R_1. Other roots and
% their routes are omitted; the two colored global branches are fully shown.
\foreach \left/\right/\k/\labelx in {9.6/17.5895/1/10.4655,
 18.7213/26.2448/2/19.5868,
 27.9093/34.9/p/33.8348}{
 \path[fill=Xshade] (\left,14.65) rectangle (\right,21.0);
 \node[tag] (sepJ) at (\labelx,14.65) {$J_{\k}$};
 \draw[region] (sepJ.west)--(\left,14.65)--(\left,21)--(\right,21)--(\right,14.65)--(sepJ.east);
 \node at ({(\left+\right)/2},20.55) {$K_{2q}$ model $\mathcal N_{\k}$};}
\node at (27.0837,18.5) {$\cdots$};
% Gray marks unused representatives; blue and magenta mark the two selected global branches.
% Four slender representative branch sets in each local complete model.
% Three roots in each inner box are v_k^1, v_k^j, v_k^q, in this order.
\foreach \xx/\rx/\col in {11.6639/.55/blueFill,
 13.55/.50/inactiveBranch,15.0/.50/magentaFill,16.45/.50/inactiveBranch,
 20.25/.50/inactiveBranch,21.7/.50/magentaFill,23.15/.50/inactiveBranch,24.9132/.50/inactiveBranch,
 29.45/.50/inactiveBranch,30.9/.50/magentaFill,32.35/.50/inactiveBranch,33.7016/.50/inactiveBranch}{
 \fill[\col] (\xx,18.45) ellipse [x radius=\rx,y radius=1.2];}
% All six adjacencies of each displayed four-branch model are shown.
\foreach \xa/\xb in {12.2139/13.05,14.05/14.50,15.50/15.95,
 20.75/21.20,22.20/22.65,23.65/24.4132,
 29.95/30.40,31.40/31.85,32.85/33.2016}{
 \draw[edge] (\xa,18.45)--(\xb,18.45);}
\foreach \xa/\xb/\xc/\xd in {11.6639/13.55/15.0/16.45,
 20.25/21.7/23.15/24.9132,
 29.45/30.9/32.35/33.7016}{
 \draw[edge] (\xa,19.65) .. controls (\xa,20.23) and (\xd,20.23) .. (\xd,19.65);
 \draw[edge] (\xa,19.65) .. controls (\xa,19.96) and (\xc,19.96) .. (\xc,19.65);
 \draw[edge,preaction={draw=white,line width=1.3pt}]
  (\xb,19.65) .. controls (\xb,19.96) and (\xd,19.96) .. (\xd,19.65);}
\foreach \left/\right/\k/\labelx in {10.0661/17.0568/1/10.7984,
 19.1873/25.7787/2/24.78,
 28.3752/34.4339/p/33.4352}{
 \path[fill=Rshade,fill opacity=.25] (\left,16.1) rectangle (\right,18.35);
 \node[tag] (sepR) at (\labelx,16.1) {$R_{\k}$};
 \draw[part] (sepR.west)--(\left,16.1)--(\left,18.35)--(\right,18.35)--(\right,16.1)--(sepR.east);
}
% The reserved roots are parallel; the selected j-th root is the middle one.
\foreach \left/\right/\k/\xa/\xj/\xq in {
 12.95/17.00/1/13.55/15.0/16.45,
 19.65/23.75/2/20.25/21.7/23.15,
 28.85/32.95/p/29.45/30.9/32.35}{
 \node[tag] (sepVfirst) at (\xa,17.85) {$v_{\k}^1$};
 \node[tag] (sepVmiddle) at (\xj,17.85) {$v_{\k}^j$};
 \node[tag] (sepVlast) at (\xq,17.85) {$v_{\k}^q$};
 \draw[part,draw=black!45,rounded corners=1.5pt]
  (sepVfirst.west)--(\left,17.85)--(\left,16.65)--(\right,16.65)--(\right,17.85)--(sepVlast.east);
 \draw[part,draw=black!45] (sepVfirst.east)--(sepVmiddle.west);
 \draw[part,draw=black!45] (sepVmiddle.east)--(sepVlast.west);
 \node at ({(\xa+\xj)/2},17.25) {{\scriptsize $\cdots$}};
 \node at ({(\xj+\xq)/2},17.25) {{\scriptsize $\cdots$}};
}
\node at (11.6639,18.05) {$u_i$};

\end{scope}

% All original P_2 ends in H_{r+1} lie in U_{r+1} minus X; W_{r+1} is only a subset.
% A pale-purple outer layer contains the new ends; the inner W frame has a light gray tint.
% The displayed ends lie midway across the carrier overlap; the narrower inner frame contains the retained ends.
\draw[draw=black!30,line width=.45pt,fill=newVertexFill,fill opacity=.30]
 (35.7,3.30) rectangle (39.55,11.1);
\path[fill=black,fill opacity=.06] (35.7,3.50) rectangle (38.5,8.25);
\node[anchor=west] at (36.80,3.95) {$w_{r+1}^1$};
\node[anchor=west] at (36.80,5.22) {$w_{r+1}^i$};
\node[anchor=west] at (36.80,6.8) {$w_{r+1}^{rq+j}$};
\node[tag] (sepWnext) at (38.5,5.9) {$W_{r+1}$};
\draw[draw=black!45,line width=.5pt] (sepWnext.north)--(38.5,3.50)--(35.7,3.50)--(35.7,8.25)--(38.5,8.25)--(sepWnext.south);
\node at (36.15,4.48) {$\vdots$};
\node at (36.15,6.12) {$\vdots$};
\node at (36.15,7.65) {$\vdots$};
\node at (36.15,9.75) {$\vdots$};

% Z_1 and Z_2 are sets, illustrated by several vertices and an ellipsis.
% They need not be disjoint in general; the representative drawing separates them.
\path[rounded corners=3pt,fill=newVertexFill,fill opacity=.30] (3.7,16.0) rectangle (7.55,20.05);
\node[tag] (sepZone) at (4.45,16.0) {$Z_1$};
\draw[part,rounded corners=3pt] (sepZone.west)--(3.7,16)--(3.7,20.05)--(7.55,20.05)--(7.55,16)--(sepZone.east);
\path[rounded corners=3pt,fill=newVertexFill,fill opacity=.30] (36.9,13.3) rectangle (39.55,19.9);
% Z_2 can meet both H_{r+1} and its complement; keep their overlap visible.
\draw[region] (36.9,16.5)--(39.55,16.5);
\node[tag] (sepZtwo) at (38.65,13.3) {$Z_2$};
\draw[part,rounded corners=3pt] (sepZtwo.west)--(36.9,13.3)--(36.9,19.9)--(39.55,19.9)--(39.55,13.3)--(sepZtwo.east);
% Full adjacencies are shown for the top vertex; other vertices have short continuations.
% Three separated routes leave Z_1 with distinct tangents and use different lanes.
\draw[edge] (5.9,16.7)
 .. controls (7.2,16.8) and (8.7,16.05) .. (9.9,15.05)
 .. controls (10.95,14.15) and (11.90,14.05) .. (12.2632,13.15);
\draw[edge] (5.9,16.7)
 .. controls (7.1,16.3) and (8.6,15.55) .. (9.8,14.72)
 .. controls (10.90,13.25) and (12.3,12.15) .. (13.5282,13.25);
\draw[edge] (5.9,16.7)
 .. controls (6.9,15.65) and (8.5,15.3) .. (9.9,14.2)
 .. controls (11.1,11.8) and (15.2,11.8) .. (17.19,13.0);
% The red arc counts distinct neighboring paths; it is not an edge.
\draw[draw=red!85!black,line width=.85pt] (5.9,16.7) ++(-56:1.05)
 arc[start angle=-56,end angle=4,radius=1.05];
\node[text=red!85!black] at (6.7,15.50) {$\geq 2t$};
% This unused Z_2 vertex lies in H_{r+1}; minimum total length forces its
% neighbors to be the first vertices outside H_{r+1} on the displayed P_2 paths.
\draw[edge] (38.3,14.2)
 .. controls (37.75,12.9) and (36.45,13.0) .. (35.3,11.8)
 .. controls (34.8334,10.6) and (35.1,8.4) .. (34.3673,6.8);
\draw[edge] (38.3,14.2)
 .. controls (37.55,14.1) and (36.3,13.8) .. (35.3,12.7)
 .. controls (34.9,11.9) and (34.9,10.0) .. (34.3673,8.7);
\draw[edge] (38.3,14.2)
 .. controls (37.5,14.5) and (36.3,14.7) .. (35.3,13.5)
 .. controls (34.6337,12.8) and (34.95,11.6) .. (34.3673,10.45);
\draw[draw=red!85!black,line width=.85pt] (38.3,14.2) ++(-201:.90)
 arc[start angle=-201,end angle=-112,radius=.90];
\node[text=red!85!black] at (37.8,12.55) {$\geq 2t$};
% Short uncapped stubs indicate omitted adjacencies at the remaining vertices.
\foreach \yy in {17.5,19.35}{
 \foreach \dy in {-.28,0,.28}{
  \draw[edge] (5.9,\yy) .. controls (6.2,\yy) and (6.45,{\yy+\dy}) .. (6.65,{\yy+\dy});
  \draw[edge,dash pattern=on .7pt off .8pt] (6.65,{\yy+\dy})--(6.98,{\yy+1.35*\dy});
 }}
\foreach \yy in {15.0,18.8}{
 \foreach \dy in {-.28,0,.28}{
  \draw[edge] (38.3,\yy) .. controls (38.1,\yy) and (37.94,{\yy+\dy}) .. (37.83,{\yy+\dy});
  \draw[edge,dash pattern=on .7pt off .8pt] (37.83,{\yy+\dy})--(37.6,{\yy+1.35*\dy});
 }}
% Further neighboring paths are indicated between the representative edges.
\node[rotate=-38] at (10.40,13.72) {{\scriptsize $\vdots$}};
\node at (36.5,13.81) {{\scriptsize $\vdots$}};
\node at (5.9,18.45) {$\vdots$};
\node at (38.3,17.6) {$\vdots$};

% Representative pairs of P_1 paths; the T underlay starts after each root.
% The other two displayed W roots also each have a pair of fan paths.
\def\pwtop{(9.8663,4.65)
 .. controls (10.29,4.3) and (10.75,4.98) .. (11.4643,4.6)}
\def\pwtopa{(9.8663,5.55)
 .. controls (10.3475,5.75) and (10.796,5.15) .. (11.4643,5.3)}
\def\pwbottom{(9.8663,10.9)
 .. controls (10.52,10.65) and (10.1175,9.55) .. (10.75,9.45)
 .. controls (11.095,9.38) and (10.98,9.1) .. (11.4643,8.9)}
\def\pwbottoma{(9.8663,12.0)
 .. controls (10.4625,11.9) and (10.06,11.0) .. (10.75,10.9)
 .. controls (11.095,10.85) and (10.98,10.3) .. (11.4643,10.1)}

\def\pw{(9.8663,8.1) .. controls (10.4655,8.35) and (10.865,7.75) .. (11.4643,7.6)}
\def\pwa{(9.8663,6.85) .. controls (10.3989,6.45) and (10.9316,6.85) .. (11.4643,6.1)}
\def\pu{(11.5307,16.40)
 .. controls (11.7305,15.65) and (12.6627,14.35) .. (12.2632,13.15)
 .. controls (12.13,12.3) and (13.4616,12.0) .. (13.4616,10.8)}
\def\pua{(12.4629,16.40)
 .. controls (13.6613,16.15) and (13.0621,14.6) .. (13.5282,13.25)
 .. controls (14.8598,12.6) and (14.5934,12.0) .. (14.9263,10.8)}
\def\pvone{(15.5255,16.40)
 .. controls (15.0595,15.65) and (17.4563,14.0) .. (17.19,13.0)
 .. controls (16.9902,12.4) and (17.5895,11.9) .. (17.5895,10.8)}
\def\pvaone{(16.3911,16.45)
 .. controls (17.6561,16.15) and (18.1221,14.55) .. (18.5882,13.2)
 .. controls (19.1873,12.5) and (19.3205,11.9) .. (19.1873,10.8)}
\def\pvtwo{(20.9184,16.40)
 .. controls (20.4523,15.80) and (22.8493,14.2) .. (23.1821,13.0)
 .. controls (23.0489,12.4) and (23.5816,11.9) .. (23.4484,10.8)}
\def\pvatwo{(21.7839,16.50)
 .. controls (23.3152,16.15) and (24.7134,14.4) .. (24.9798,13.2)
 .. controls (25.1795,12.5) and (24.9132,11.85) .. (25.0463,10.8)}
\def\pvp{(29.5737,16.40)
 .. controls (28.6416,15.85) and (29.9732,14.0) .. (29.5737,13.0)
 .. controls (29.2407,12.3) and (29.5071,11.75) .. (29.4405,10.8)}
\def\pvap{(30.6389,16.45)
 .. controls (31.5711,16.15) and (31.0384,14.3) .. (31.3048,13.1)
 .. controls (31.5711,12.3) and (31.0384,11.75) .. (30.9052,10.8)}
\foreach \p in {\pwtop,\pwtopa,\pwbottom,\pwbottoma,\pw,\pwa,\pu,\pua,\pvone,\pvaone,\pvtwo,\pvatwo,\pvp,\pvap}{\draw[track one] \p;}
\foreach \p in {\pwtopa,\pwbottoma,\pwa,\pua,\pvaone,\pvatwo,\pvap}{\draw[aux] \p;}
\foreach \xa/\ya/\xb/\yb in {8.1/5.5/9.8663/5.55,8.1/12.5/9.8663/12.0,8.1/9.0/9.8663/6.85,
 11.6639/17.90/12.4629/16.40,
 15.0/17.90/16.3911/16.45,
 21.7/17.90/21.7839/16.50,
 30.9/17.90/30.6389/16.45}{
 \draw[aux] (\xa,\ya)--(\xb,\yb);}
% Retained routes for the two further roots; their D pieces are omitted.
\foreach \xa/\ya/\xb/\yb in {8.1/5.5/9.8663/4.65,8.1/12.5/9.8663/10.9}{
 \draw[draw=auxGray,line width=1.2pt,line cap=round] (\xa,\ya)--(\xb,\yb);}
\foreach \p in {\pwtop,\pwbottom}{
 \draw[draw=auxGray,line width=1.2pt,line cap=round] \p;}
\draw[route,branchBlue] (8.1,9.0)--(9.8663,8.1);
\draw[route,branchBlue] \pw;
\draw[route,branchBlue] (11.6639,17.90)--(11.5307,16.40);
\draw[route,branchBlue] \pu;
\foreach \xa/\ya/\xb/\yb in {15.0/17.90/15.5255/16.40,
 21.7/17.90/20.9184/16.40,
 30.9/17.90/29.5737/16.40}{
 \draw[route,branchMagenta] (\xa,\ya)--(\xb,\yb);}
\foreach \p in {\pvone,\pvtwo,\pvp}{\draw[route,branchMagenta] \p;}

% P_2 runs from D to the right carrier; three retained and two unselected representatives are shown.
% With their H_{r+1} ends removed, these paths contribute to T_{r+1}^2.
\def\phfirst{(34.3673,3.95)
 .. controls (33.8348,3.68) and (33.5018,4.20) .. (32.9027,4.05)
 .. controls (32.3034,3.78) and (31.9039,3.90) .. (31.4379,4.05)}
\def\phzero{(34.3673,5) .. controls (33.8348,4.65) and (33.5018,5.35) .. (32.9027,5.1)
 .. controls (32.3034,4.65) and (31.9039,4.85) .. (31.4379,5)}
\def\ph{(34.3673,6.8) .. controls (33.8348,6.45) and (33.5018,7.15) .. (32.9027,6.9)
 .. controls (32.3034,6.45) and (31.9039,6.65) .. (31.4379,6.8)}
\def\pha{(34.3673,8.7) .. controls (33.7682,8.35) and (33.3687,9.1) .. (32.7695,8.8)
 .. controls (32.3034,8.45) and (31.8373,8.5) .. (31.4379,8.6)}
\def\phb{(34.3673,10.45) .. controls (33.8348,10.1) and (33.3687,10.8) .. (32.7695,10.5)
 .. controls (32.3034,10.15) and (31.8373,10.35) .. (31.4379,10.3)}
\foreach \p in {\phfirst,\phzero,\ph,\pha,\phb}{\draw[track two] \p;}
% The gray representative ends at w_{r+1}^1; its connection inside D is omitted.
\draw[draw=auxGray,line width=1.2pt,line cap=round] (36.15,3.95)--(34.3673,3.95);
\draw[draw=auxGray,line width=1.2pt,line cap=round] \phfirst;
\draw[route,branchBlue] (36.15,5.22)--(34.3673,5);
\draw[route,branchBlue] \phzero;
\draw[route,branchMagenta] (36.15,6.8)--(34.3673,6.8);
\draw[route,branchMagenta] \ph;
\draw[aux] (36.15,8.7)--(34.3673,8.7);
\draw[aux] \pha;
\draw[aux] (36.15,10.45)--(34.3673,10.45);
\draw[aux] \phb;
\node at (32.9027,5.65) {{\scriptsize $\vdots$}};
\node at (32.9027,9.4) {{\scriptsize $\vdots$}};

% Vertices are drawn last; line crossings alone do not denote incidence.
\foreach \xx/\yy in {9.8663/4.65,9.8663/5.55,9.8663/10.9,9.8663/12.0,
 11.4643/4.6,11.4643/5.3,11.4643/8.9,11.4643/10.1,
 8.1/5.5,
 8.1/9.0,
 8.1/12.5,
 15.4589/6.8,
 24.1143/9.25,
 11.4643/7.6,
 11.4643/6.1,
 9.8663/8.1,
 9.8663/6.85,
 11.6639/17.90,
 13.55/17.90,15.0/17.90,16.45/17.90,
 20.25/17.90,21.7/17.90,23.15/17.90,24.9132/17.90,
 29.45/17.90,30.9/17.90,32.35/17.90,33.7016/17.90,
 11.5307/16.40,
 12.4629/16.40,
 12.2632/13.15,
 13.5282/13.25,
 13.4616/10.8,
 14.9263/10.8,
 15.5255/16.40,
 16.3911/16.45,
 17.5895/10.8,
 19.1873/10.8,
 20.9184/16.40,
 21.7839/16.50,
 23.4484/10.8,
 25.0463/10.8,
 29.5737/16.40,
 30.6389/16.45,
 29.4405/10.8,
 30.9052/10.8,
 31.4379/4.05,
 34.3673/3.95,
 31.4379/5,
 34.3673/5,
 36.15/3.95,
 36.15/5.22,
 31.4379/6.8,
 31.4379/8.6,
 31.4379/10.3,
 34.3673/6.8,
 34.3673/8.7,
 34.3673/10.45,
 36.15/6.8,
 36.15/8.7,
 36.15/10.45,
 17.19/13.0,
 5.9/16.7,
 5.9/17.5,
 5.9/19.35,
 38.3/14.2,
 38.3/15.0,
 38.3/18.8}{
 \node[vertex] at (\xx,\yy) {};}
\node[align=center] at (20.9184,12.85) {$\mathcal P_1$\\[-1pt]{\scriptsize double fan}};
\node at (33.4352,3.10) {$\mathcal P_2$};
\node at (28.15,5.75) {$B_{r+1}^i$};
\node[tag] at (27.0,14.2) {$B_{r+1}^{rq+j}$};

\end{scope}

% The old U set and old T families are intentionally not drawn.
% Yellow marks displayed subsets of X; pale green marks R, itself a subset of X.
% The same pale green unites W_r and every displayed R_i as the double-fan roots.
% Pale purple marks U_{r+1} minus X.
% The two bands begin after the deleted root/terminal, so they show precisely
% the newly added paths in T_{r+1}^1 and T_{r+1}^2, rather than the full P families.
% A single legend row remains within the drawing's natural horizontal bounds.
\draw[part,fill=Xshade] (2.45,23.0) rectangle (3.65,23.6);
\node[anchor=west] at (4.05,23.3) {parts of $X$};
\draw[part,fill=Rshade,fill opacity=.25] (9.75,23.0) rectangle (10.95,23.6);
\node[anchor=west] at (11.35,23.3) {$R\subseteq X$};
\draw[part,fill=newVertexFill,fill opacity=.30] (16.05,23.0) rectangle (17.25,23.6);
\node[anchor=west] at (17.65,23.3) {$U_{r+1}\setminus X$};
\draw[track one] (22.3,23.3) .. controls (22.65,23.0) and (23.1,23.6) .. (23.5,23.3);
\node[anchor=west] at (23.9,23.3) {new paths in $\mathcal T_{r+1}^1$};
\draw[track two] (31.25,23.3) .. controls (31.6,23.0) and (32.05,23.6) .. (32.45,23.3);
\node[anchor=west] at (32.85,23.3) {new paths in $\mathcal T_{r+1}^2$};
\end{tikzpicture}%
}
\endgroup
\caption{Illustration of the separation construction.}
\label{fig:separation-step}
\end{figure}

\medskip
\noindent\textbf{Step 4.} Extending $\calM_r$ to $\calM_{r+1}$.\par\nobreak
\medskip\nobreak
We first construct a $K_{2q}$ model in each $J_i$ rooted at $R_i$.
Since $q\le t$, we have $\ln(2q)\le\ln(2t)\le p^2$.
Hence
\[
 2q\sqrt{\ln(2q)}+|R|+(r+1)q\le2pq+8t\le12t.
\]
By Equation~\eqref{eq:separation-parameters}, $\kappa(J_i)\ge N_0t>12\cst{thm:routing}t$ for every $i\in[p]$.
By Lemma~\ref{thm:routing}, each $J_i$ is $(2q,|R|+(r+1)q)$-woven.
Starting with $\calQ_0$, apply Lemma~\ref{lem:reroute} in $G'$ successively with subgraphs $J_1,\ldots,J_p$.
At step $i$, use root set $R_i$ and input linkage $\calQ_{i-1}$ to obtain a $K_{2q}$ model $\mathcal N_i$ in $J_i$ rooted at $R_i$ and an $\mathcal S$-linkage $\calQ_i$ such that
\[
 V(\calQ_i)\cap V(\mathcal N_i)=R_i,\qquad V(\calQ_i)\subseteq V(\calQ_{i-1})\cup V(J_i).
\]
For each $i\in[p]$, later applications add path vertices only in subgraphs $J_s$ with $s>i$, all disjoint from $\mathcal N_i$.
Since every vertex of $R_i$ remains a path end, $V(\calQ_p)\cap V(\mathcal N_i)=R_i$.

By the inclusion above,
\[
 V(\calQ_p)\subseteq V(\calQ_0)\cup\bigcup_{i=1}^pV(J_i)\subseteq V(\calQ_0)\cup X.
\]
Each $J_i$ is disjoint from $H_{r+1}$, $D$ and $\calM_r$.
Thus no added path vertex lies in $H_{r+1}$, $D$ or $\calM_r$.
Since $\calQ_p$ is an $\mathcal S$-linkage, it meets $H_{r+1}$ only at $W_{r+1}$ and $\calM_r$ only at $W_r$, as does $\calQ_0$ in Step 3.
No path in $\calQ_p$ has an internal vertex in $D$.
For each $z\in R\cup W_{r+1}$, let $Q_z$ be the unique path in $\calQ_p$ with end $z$.

For each $i\in[p]$, label $q$ vertices of $R_i$ as $v_i^1,\ldots,v_i^q$.
From the $pq\ge rq$ vertices of $\bigcup_{i=1}^p(R_i\setminus\{v_i^1,\ldots,v_i^q\})$, choose distinct vertices $u_1,\ldots,u_{rq}$.
For $i\in[rq]$ and $j\in[q]$, let
\[
 F_i=\{w_r^i,u_i,w_{r+1}^i\},\qquad
 F_{rq+j}=\{v_h^j:h\in[p]\}\cup\{w_{r+1}^{rq+j}\}.
\]
The sets $F_1,\ldots,F_{(r+1)q}$ are pairwise disjoint and contain $3rq+(p+1)q\le4pq\le8t$ vertices in total.
Their corresponding ends in $D$ are distinct, since the paths in $\calQ_p$ are disjoint.
By Equation~\eqref{eq:separation-parameters} and $\cst{thm:routing}\ge\cst{thm:knit}$, we have $\kappa(D)\ge N_0t>8\cst{thm:knit}t$.
By Lemma~\ref{thm:knit}, there are pairwise disjoint connected subgraphs $D_1,\ldots,D_{(r+1)q}$ of $D$ such that, for each $i\in[(r+1)q]$, $D_i$ contains the ends in $D$ of the paths $Q_z$ with $z\in F_i$ (see Fig.~\ref{fig:separation-step} for an illustration).

For $i\in[rq]$, form $B_{r+1}^i$ from $B_r^i$ by adding $D_i$, the paths $Q_z$ for $z\in F_i$, and the branch set of $\mathcal N_s$ containing $u_i$, where $s$ is the unique index with $u_i\in R_s$.
For $j\in[q]$, form $B_{r+1}^{rq+j}$ from $D_{rq+j}$ by adding the paths $Q_z$ for $z\in F_{rq+j}$ and, for each $i\in[p]$, the branch set of $\mathcal N_i$ containing $v_i^j$.
The paths in $\calQ_p$ are disjoint, meet each $\mathcal N_i$ only at $R_i$, and have no internal vertex in $D$.
The subgraphs $D_1,\ldots,D_{(r+1)q}$ are disjoint.
Hence the branch sets of $\calM_{r+1}$ are connected and pairwise disjoint.

Adjacencies between the enlarged old branch sets are inherited from $\calM_r$.
For $i\in[rq]$, let $s$ be the index with $u_i\in R_s$.
In $\mathcal N_s$, the branch set containing $u_i$ is adjacent to the branch set containing $v_s^j$ for every $j\in[q]$.
Thus $B_{r+1}^i$ is adjacent to every new branch set.
Any two new branch sets are adjacent within $\mathcal N_1$.
Therefore $\calM_{r+1}=(B_{r+1}^1,\ldots,B_{r+1}^{(r+1)q})$ is a $K_{(r+1)q}$ model.
Since $\calQ_p$ meets $H_{r+1}$ only at $W_{r+1}$, each $B_{r+1}^i$ meets $H_{r+1}$ only at $w_{r+1}^i$.
Thus~\ref{inv:tangent} holds.

\medskip
\noindent\textbf{Step 5.} Constructing $\calT_{r+1}^1$ and $\calT_{r+1}^2$.\par\nobreak
\medskip\nobreak
Recall that $\calP_1$ is the double fan from $R$ to $V(D)$ in $G'$
defined in Step 1, and $\calP_2$ is the family of $2(r+1)q$ disjoint
$(V(H_{r+1}),V(D))$-paths in $G'\setminus R$ defined in Step 3.
Let
\[
 \calT_{r+1}^1=\calT_r^1\cup\{P\setminus R:P\in\calP_1\},\quad
 \calT_{r+1}^2=\calT_r^2\cup\{P\setminus V(H_{r+1}):P\in\calP_2\}.
\]
The paths in $\calP_1\cup\calP_2$ lie in $G'$ and avoid $V(\calT_r^1\cup\calT_r^2)$.
Thus the paths in each of $\calT_{r+1}^1,\calT_{r+1}^2$ are pairwise disjoint.
By the construction in Step 2, $H_{r+1}$ is disjoint from
$V(\calT_r^1\cup\calT_r^2\cup\calP_1)$.
Each path in $\calP_2$ meets $H_{r+1}$ only at the end deleted when defining $\calT_{r+1}^2$.
Hence both $\calT_{r+1}^1$ and $\calT_{r+1}^2$ avoid $H_{r+1}$.
By~\ref{inv:paths} at stage $r$, $|\calT_{r+1}^1|+|\calT_{r+1}^2|\le16tr+12t+4t=16t(r+1)$.
This proves~\ref{inv:paths}.

\medskip
\noindent\textbf{Step 6.} Constructing $U_{r+1}$.\par\nobreak
\medskip\nobreak
For each $i\in[2]$, let $Z_i$ be the set of vertices outside
$X\cup V(\calT_{r+1}^1\cup\calT_{r+1}^2)$ that have a neighbor on at least $2t$
distinct paths in $\calT_{r+1}^i$.
Let
\[
 U_{r+1}=X\cup Z_1\cup Z_2\cup\bigl(V(\calP_2)\cap V(H_{r+1})\bigr).
\]
We next verify that~\ref{inv:size}--\ref{inv:neighbors} hold.

For each $i\in[2]$, consider the bipartite minor of $G$ with one part
formed by contracting each path in $\calT_{r+1}^i$ to a single vertex
and the other part $Z_i$.
Every vertex of $Z_i$ has degree at least $2t$ in this minor.
By Lemma~\ref{lem:bipartite}, $|Z_i|\le\cst{lem:bipartite}(\ln t)|\calT_{r+1}^i|$.
Thus
\[
  |U_{r+1}\setminus X|\le2(r+1)q+|Z_1|+|Z_2|\le4t+16\cst{lem:bipartite}t(\ln t)(r+1).
\]
By Inequality~\eqref{eq:separation-small-pieces}, Equation~\eqref{eq:separation-parameters},
\ref{inv:size} at stage $r$, and the inequalities $p+1\le3\ln t$ and $r+1\le p\le2\ln t$,
\begin{align*}
 |U_{r+1}|\le {}& |U_r|+(p+1)\cst{thm:small-connected}^2t(\ln t)^3+4t
              +16\cst{lem:bipartite}t(\ln t)(r+1)\\
 \le {}& t+rN_0t(\ln t)^4+(3\cst{thm:small-connected}^2+4+32\cst{lem:bipartite})t(\ln t)^4
 \le t+(r+1)N_0t(\ln t)^4.
\end{align*}
Recall that, in the construction of $\calM_{r+1}$ in Step 4, we use the old model
$\calM_r$, the subgraphs $D_1,\ldots,D_{(r+1)q}$ of $D$, branch sets of
$\mathcal N_i$ in $J_i$ for $i\in[p]$, and paths in $\calQ_p$.
Here $V(D)\cup\bigcup_{i=1}^pV(J_i)\subseteq X$ and
$V(\calQ_p)\subseteq V(\calQ_0)\cup X\subseteq V(\calP_1\cup\calP_2)\cup X$.
The ends removed from $\calP_1$ and $\calP_2$ in Step 5 all lie in $U_{r+1}$.
Thus, by~\ref{inv:size} at stage $r$ and the definitions of $\calT_{r+1}^1,\calT_{r+1}^2$, we have
\[
 V(\calM_{r+1})\subseteq V(\calM_r)\cup X\cup V(\calP_1\cup\calP_2)
 \subseteq U_{r+1}\cup V(\calT_{r+1}^1\cup\calT_{r+1}^2).
\]
Also, $W_{r+1}\subseteq V(\calP_2)\cap V(H_{r+1})\subseteq U_{r+1}$, so~\ref{inv:size} holds.

Outside $U_{r+1}$, the new path families consist of the vertices in
$Y\setminus U_{r+1}$ and those of $\calP_2$ outside $U_{r+1}\cup Y$.
All ends of $\calP_2$ lie in $U_{r+1}$, and $|\calP_2|\le4t$.
By Lemma~\ref{lem:short-fan}(2) in $G'\setminus R$, the graph induced
by $V(\calP_2)\setminus(U_{r+1}\cup Y)$ is $(8t-1)$-degenerate.
Every vertex of $V(\calP_2)\setminus(U_{r+1}\cup Y)$ lies outside $X\cup Y$
and hence has at most $48t$ neighbors in $Y$ by Step 2.
Delete the new vertices first, each with at most $56t-1$ remaining neighbors.
The remaining graph is $G[Y\setminus U_{r+1}]$, which is
$100t$-degenerate by Step 2. This proves~\ref{inv:degenerate}.

Let $v$ be any vertex outside $U_{r+1}\cup V(\calT_{r+1}^1\cup\calT_{r+1}^2)$.
Then $v\in V(G')\setminus R$ and $v$ lies on no path in $\calP_1\cup\calP_2$.
By Lemma~\ref{lem:short-fan}(1), Remark~\ref{rem:short-double-fan}, and~\ref{inv:neighbors}
at stage $r$, $v$ has at most three neighbors on each path in $\calT_{r+1}^1\cup\calT_{r+1}^2$.
Since $Z_1\cup Z_2\subseteq U_{r+1}$, we have $v\notin Z_1\cup Z_2$.
By the definitions of $Z_1,Z_2$, the vertex $v$ has neighbors on at most
$2t-1$ paths in $\calT_{r+1}^1$ and at most $2t-1$ paths in $\calT_{r+1}^2$.
Thus $v$ has at most $3(2t-1)+3(2t-1)<12t$ neighbors in
$V(\calT_{r+1}^1\cup\calT_{r+1}^2)$, so~\ref{inv:neighbors} holds.

This completes the construction of the tuple at stage $r+1$
satisfying~\ref{inv:carrier}--\ref{inv:neighbors}.

\smallskip
Repeating the six steps for $r=0,\ldots,p-1$, we obtain a $K_{pq}$ model $\calM_p$.
Since $pq\ge t$, $G$ contains a $K_t$ minor, contrary to the hypothesis.
Hence $L$ is $k$-separable in $G$.
\end{proof}

\section{Proof of Theorem~\ref{thm:main}}\label{sec:main-proof}

The separation lemma provides three disjoint subgraphs to which induction
applies. We join the resulting models into a larger rooted complete minor.
The required paths are chosen first, and the rooted model is then
constructed in a subgraph that meets these paths only at the chosen roots.

The recursion below follows the construction in the proof of
Theorem~7.1 in~\cite{DelcourtPostle2025}.
We state and prove the list version with the bounds needed here.

Let $a_0$ be the least positive integer such that
$14\cdot2^{a_0}\ge \tst{thm:local-list}$.

\begin{lemma}\label{thm:recursive}
There are absolute positive integers $N,C=\cst{thm:recursive}$ such that,
for all integers $a\ge a_0$ and $0\le b\le a$, every
$N2^{a-b}3^b$-connected graph $G$ with a bad list assignment $L$ satisfying
$|L|\ge C\bigl(3^a+2^{a-b}3^b\bigr)$
is $(2^{a-b}3^b,\,3\cdot2^{a-b}3^b)$-woven.
\end{lemma}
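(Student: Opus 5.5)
We would prove Lemma~\ref{thm:recursive} by induction on $a$ and, for fixed $a$, on $b$, following the recursion of Theorem~7.1 in~\cite{DelcourtPostle2025}. Write $m=2^{a-b}3^b$; we must show that $G$ is $(m,3m)$-woven. The induction unfolds in both indices: $3m=2^{a-b}3^{b+1}$, so the case $b+1$ produces models three times as large, and $m/2=2^{(a-1)-b}3^b$ corresponds to $(a-1,b)$.

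The key reduction is that, by Lemma~\ref{thm:routing}, it suffices to show that $G$ has large connectivity relative to $m\sqrt{\ln m}+3m$. Our hypothesis, however, only gives $\kappa(G)\ge Nm$, so the woven property must come from the bad list assignment. The plan is as follows.

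Fix the sets $R,S,T$ with $|R|=m$ and $|S|=|T|=3m$. We first reserve paths. Since $Nm$ is large compared to $|R\cup S\cup T|\le7m$, Lemma~\ref{thm:linked} yields an $\{(s_i,t_i)\}$-linkage $\calP$. We choose $\calP$, together with a double fan from $R$, of minimum total length, so that Lemma~\ref{lem:short-fan} and Remark~\ref{rem:short-double-fan} make the union of the paths $O(m)$-degenerate and every outside vertex has few neighbors on it.

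Lemma~\ref{lem:deletion} then lets us delete the bounded set of terminals, together with the sparse paths as $Y$, at a list cost of $O(3^a+m)$. The remaining graph keeps a bad sublist, and Lemma~\ref{thm:bad-connected} gives a highly connected piece.

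Inside this piece we apply Lemma~\ref{lem:separation} twice to obtain three vertex-disjoint subgraphs. Each has a bad sublist of size at least $|L|-O(3^a)$, and by Lemma~\ref{thm:bad-connected} each can be made $N(m/2)$- or $N(m/3)$-connected as needed. To these pieces we apply the induction hypothesis at parameter $(a-1,b)$ or $(a,b-1)$, giving woven subgraphs $H_1,H_2,H_3$ that supply rooted complete minors of order about $m/3$ or $m/2$. The choice of $C$ in the list bound $C(3^a+m)$ is what absorbs the accumulated losses: the $O(3^a)$ from separation and the $O(m)$ from path deletion.

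Next we combine. We route disjoint paths from $R$ into the three pieces, where each $H_i$ receives roots and also a set of linking terminals. Within each $H_i$, Lemma~\ref{lem:reroute} produces a rooted model and reroutes the linkage so that it meets the model only at its roots. These paths then glue the three smaller models into a $K_m$ model rooted at $R$, in the spirit of the model-merging in Step~4 of Lemma~\ref{lem:separation}. The $3m$ linking requirement in the woven definition is exactly what supplies enough linkage demands between pieces to create the cross adjacencies between the three partial models, while the original linkage is kept disjoint from them.

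The base case $a=a_0$ (and small $b$) is handled by Lemma~\ref{thm:routing} directly, with $N$ large, since there $m$ is bounded.

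The main obstacle is the bookkeeping in the combining step. One has to check that the woven parameters of the pieces, namely $3\cdot(\text{piece size})$ demands, suffice to carry both the external $(s_i,t_i)$ linkage through or around the pieces and the internal edges joining the three partial models. It must also be shown that connectivity $Nm$ of $G$ is enough to route into pieces whose connectivity is only guaranteed via the lists. I would first try to mimic the Delcourt–Postle accounting exactly, replacing their coloring-based density with our list-based extraction, namely Lemma~\ref{lem:separation} together with Lemma~\ref{thm:bad-connected}.
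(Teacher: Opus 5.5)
\textbf{There is a genuine gap: the combining step has no working mechanism, and the key idea of the recursion is missing.} You miscompute the case $(a,b-1)$: it gives models of order $2^{a-(b-1)}3^{b-1}=2m/3$, not $m/3$. This matters. Three disjoint rooted models of order $m/3$ or $m/2$ cannot be glued into a $K_m$ model, because about $m^2/3$ adjacencies between branch sets in different local models would be needed, and $O(m)$ linkage paths cannot supply them. So the $3m$ demands do not ``create cross adjacencies''.

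In the paper, the roots are first routed into a highly connected $H$, ending at $R_H$. Then three disjoint pieces $J_1,J_2,J_3\subseteq H$ are found, each $(2m/3,2m)$-woven by the case $b-1$. The roots are split into groups $A_1,A_2,A_3$ of size $m/3$. Each root in $A_j$ takes one branch set from each of the two local $K_{2m/3}$ models in $J_h$, $h\ne j$; this uses exactly $3\cdot(2m/3)=2m$ local branch sets, two per root. Any two roots then share a local model, in which their branch sets are adjacent. The second woven parameter of each piece, $3\cdot(2m/3)=2m$, is what lets Lemma~\ref{lem:reroute} reroute the $2m$ connecting paths (two per root) so that they meet each local model only at its roots. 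This is why the second parameter is three times the first.

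Your induction also lacks a valid base case and a workable list budget.
\begin{itemize}
\item \emph{Base case.} Passing from $(a-1,0)$ to $(a,0)$ doubles $m$, which three pieces of order $m/2$ cannot do: every root would need two of them, i.e.\ $2m>3m/2$ slots. Lemma~\ref{thm:routing} helps only for bounded $m$. In the paper, the base case is $b=0$ for every $a$, so only induction on $b$ is needed. There $p=2^a$, and Observation~\ref{obs:bad-density} gives density at least $C3^a/2>14\cst{thm:density}2^a\sqrt{\ln(14\cdot2^a)}$. Hence Theorem~\ref{thm:density} yields a $K_{14p}$ minor, so the $3^a$ term in the list bound is reserved for exactly this.
\item \emph{Minor-freeness.} You never establish $K_t$-minor-freeness, which Lemmas~\ref{lem:deletion}, \ref{lem:separation} and~\ref{lem:prepare} all require. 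The paper uses a dichotomy. Put $R\cup S\cup T$ into a set $Z$ of size $7p$. If $G$ has a $K_{14p}$ minor, Lemma~\ref{lem:rooted-minor} gives a $K_{7p}$ model rooted at $Z$, which directly yields both the rooted $K_p$ model and the linkage. Otherwise every tool is applied with $t=14p$.
\item \emph{List accounting.} With $t=14p$, every loss is $O(p)$ and fits into the slack $C(3^a+p)-C(3^a+2p/3)=Cp/3$. The $O(3^a)$ separation loss you propose cannot be absorbed when recursing to $(a,b-1)$, since $p$ may be far smaller than $3^a$.
\item \emph{Reserving the linkage.} Lemma~\ref{lem:short-fan} concerns $(A,B)$-path families and double fans, not minimum-length linkages with prescribed pairs. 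So your claim that the $(s_i,t_i)$-linkage from Lemma~\ref{thm:linked} is sparse is unjustified. The paper instead sends a minimum-length double fan from $Z$ into a small highly connected $D$ (Lemma~\ref{lem:prepare}). It lets Lemma~\ref{lem:deletion} color $Z\cup V(D)$ as the small set, and performs all pairing inside the $4p$-linked graph $D$.
\end{itemize}
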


\begin{proof}[Proof of Theorem~\ref{thm:main} assuming Lemma~\ref{thm:recursive}]
Let $t\ge3^{a_0}$ be an integer, and choose an integer $a$ such that
$t\le3^a<3t$. Then $a\ge a_0$.
We take $C=3(4N+3\cst{thm:recursive})$.
Suppose that a $K_t$-minor-free graph $G$ has a bad list assignment $L$
with $|L|=Ct$. Since $3^a<3t$,
\[
 |L|-4N3^a\ge Ct-4N3^a>3\cst{thm:recursive}3^a.
\]
In particular, $|L|\ge4N3^a+1$.
By Lemma~\ref{thm:bad-connected}, $G$ has an induced
$N3^a$-connected subgraph $H$ with a bad sublist $L_H\subseteq L$
satisfying $|L_H|\ge|L|-4N3^a\ge2\cst{thm:recursive}3^a$.
By Lemma~\ref{thm:recursive} with $b=a$, $H$ is $(3^a,3^{a+1})$-woven.
Thus $H$ has a $K_{3^a}$ minor and hence a $K_t$ minor, a contradiction.

For $2\le t<3^{a_0}$, Theorem~\ref{thm:density} below and greedy list coloring imply
\[
 \ch(G)\le2\cst{thm:density}t\sqrt{\ln t}+1
 \le\bigl(2\cst{thm:density}\sqrt{a_0\ln3}+1\bigr)t.
\]
Enlarging $C$ to a sufficiently large integer if necessary, every
$K_t$-minor-free graph is
$Ct$-choosable for every integer $t\ge2$.
\end{proof}

It remains to prove Lemma~\ref{thm:recursive}.
We first record a density bound and a preparatory lemma, then give the inductive proof.

\subsection{Preliminaries}\label{subsec:preparation}

Kostochka~\cite{Kostochka1984} and
Thomason~\cite{Thomason1984,Thomason2001} proved the following density bound.

\begin{theorem}[\cite{Kostochka1984,Thomason1984,Thomason2001}]
\label{thm:density}
There is an absolute constant $C=\cst{thm:density}\ge1$ such that,
for every integer $t\ge2$, every nonempty graph $G$ with
$\mathsf{d}(G)\ge Ct\sqrt{\ln t}$
contains a $K_t$ minor.
\end{theorem}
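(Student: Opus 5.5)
We plan to follow the classical Kostochka--Thomason argument: pass to a minor-minimal dense graph, extract a small subgraph of density about one half, and find a $K_t$ model there by choosing random branch sets of size $\Theta(\sqrt{\ln t})$. Set $D=Ct\sqrt{\ln t}$ with $C$ large. Among all minors of $G$ with at least $D$ times as many edges as vertices, choose $H$ minimal.

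\textbf{Step 1 (consequences of minimality).} Deleting an edge of $H$ destroys the density condition, so $\mathsf e(H)=\ceil{D\,\mathsf v(H)}$. Deleting a vertex also destroys it, so $\delta(H)>D-1$. Contracting an edge $uv$ removes one vertex and $1+|N(u)\cap N(v)|$ edges. Minimality therefore forces every edge of $H$ to lie in at least $\ceil{D}-1$ triangles. Since $\mathsf e(H)\le D\,\mathsf v(H)+1$, some vertex $v$ has degree at most $2D+1$. The graph $F=H[N(v)]$ then has $n\le2D+1$ vertices and minimum degree at least $D-1\ge(n-3)/2$. So $F$ is a graph on $n=\Theta(t\sqrt{\ln t})$ vertices in which every vertex sees roughly half of the others.

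\textbf{Step 2 (random branch sets in $F$).} Let $s=\ceil{c\sqrt{\ln t}}$ with $c$ an absolute constant, and choose $2t$ pairwise disjoint branch sets in $F$, each of order about $s$. To make them connected, I would build each one as a star. Pick a center $c_i$, and then pick $s$ random vertices of $N_F(c_i)$; this uses $O(ts)\le n/2$ vertices when $C$ is large. The star $\{c_i\}\cup S_i$ is automatically connected.

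For two stars $i\ne j$ to be adjacent, it suffices that some vertex of $S_i$ has a neighbor in $S_j$. Every vertex of $F$ misses at most about half of $F$. Revealing $S_i$ and then $S_j$, the $s$ vertices of $S_i$ have large neighborhoods, so a random $s$-set misses all of them with probability roughly $2^{-\Omega(s^2)}\le t^{-3}$ for suitable $c$. Then the expected number of nonadjacent pairs is $o(t)$. Deleting one star from each bad pair leaves at least $t$ pairwise adjacent stars, which form a $K_t$ model in $F$ and hence in $G$.

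\textbf{Main obstacle.} The delicate step is Step 2. The stars are confined to individual neighborhoods $N_F(c_i)$, and the min-degree condition bounds only the degrees of single vertices. Lower bounds on the number of edges between two specific neighborhoods do not follow directly, and the near-half-density also has to survive removal of previously used vertices.

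My first attempt would be to bound the failure probability by unions over each vertex's non-neighborhood. Any $x\in S_i$ has at most about $n/2$ non-neighbors, so conditioned on $S_i$, a uniform $s$-subset of a set $T$ of order $\Omega(n)$ avoids $N(x)$ for all $x\in S_i$ with small probability. This works provided $|N(x)\cap T|\ge\varepsilon|T|$. I would enforce that by first cleaning $F$: repeatedly discard vertices of low degree into the current neighborhoods. Failing that, I would fall back on Thomason's original device. That device takes branch sets to be uniformly random $s$-sets, and it restores connectivity afterward by routing through a reserved set of $o(n)$ vertices, using the triangle property of Step 1 to supply common neighbors.

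The exact constant $C$ is unimportant here, since only the existence of an absolute constant is claimed.
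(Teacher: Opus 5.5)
The paper gives no proof of this theorem; it is quoted from Kostochka and Thomason, so your outline has to stand on its own. Step~1 is correct: you pass to a minor-minimal $H$ and take $F=H[N(v)]$ with $n\le 2D+1$ and $\delta(F)\ge D-1$. Step~2, however, is the whole content of the theorem, and its key estimate is false as stated. The bound $\Prob(S_j\cap N(S_i)=\varnothing)\le 2^{-\Omega(s^2)}$ needs $N(S_i)$ to cover all but a $2^{-\Omega(s)}$-fraction of the set from which $S_j$ is drawn. Large \emph{individual} neighbourhoods of the vertices of $S_i$ give nothing of the sort, because these neighbourhoods may coincide.

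Here is a concrete failure. Nothing in Step~1 rules out $F$ being two disjoint cliques of order $D$. Then stars centred in different cliques are never adjacent, so a constant fraction of all pairs fail, and deleting one star per bad pair leaves almost nothing. Two dense halves joined by a sparse bipartite graph behave the same way while being connected.

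Your fallback (a) does not repair this. The condition $|N(x)\cap T|\ge\varepsilon|T|$ for each $x\in S_i$ only bounds the failure probability by $(1-\varepsilon)^s=t^{-o(1)}$. Fallback (b) fails for a similar reason. For $F=K_{D,D}$, which is again compatible with Step~1, two uniformly random $s$-sets are non-adjacent exactly when both lie on one side. This has probability about $2^{1-2s}=t^{-o(1)}$, giving $t^{2-o(1)}$ bad pairs in expectation.

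The triangle property does not help with connectivity either. It supplies common neighbours only for \emph{adjacent} vertices of $H$. The only common neighbour you are guaranteed for two vertices of $F$ is $v$ itself, and $F$ need not even be connected.

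Two ingredients are missing.

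\textbf{Sampling and discarding.} The random sets must be drawn from a graph in which \emph{every} vertex has linearly many neighbours in the sampling domain, not from the neighbourhoods $N_F(c_i)$. You must also discard bad \emph{sets} before counting bad pairs. Suppose the graph has $n'$ vertices and $\delta\ge\gamma n'$. Then $\Ex|V\setminus N[A_i]|\le(1-\gamma)^s n'$. By Markov, all but an expected $(1-\gamma)^{s/2}$-fraction of the $A_i$ are \emph{good}, meaning $|V\setminus N[A_i]|\le(1-\gamma)^{s/2}n'$. Deleting the non-good sets costs only $o(t)$ sets. For a good $A_i$, the probability that $A_j$ misses $N[A_i]$ is at most $\bigl(2(1-\gamma)^{s/2}\bigr)^s\le t^{-3}$ once $s=c\sqrt{\ln t}$ with $c$ large. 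This is what handles $K_{D,D}$: the good sets there are those meeting both sides.

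\textbf{Connectivity.} This needs a genuine argument. For example, first use Mader's theorem to pass to a $\lfloor (D-1)/4\rfloor$-connected subgraph $F'\subseteq F$, so that $\delta(F')\ge\mathsf v(F')/9$, and run the previous step inside $F'$. Then connect each surviving $A_i$ greedily by paths of bounded length. After deleting the $O(ts)$ vertices already used, the remaining graph is still $\Omega(D)$-connected on at most $2D+1$ vertices. By Menger's theorem it therefore has bounded diameter, provided $C$ is large compared with $c$.

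With these two additions your outline becomes a proof. Without them, Step~2 is only a heuristic, valid for random-like $F$.
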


The next lemma is based on the construction of a small connected
subgraph and a double fan in the proof of Theorem~7.1
in~\cite{DelcourtPostle2025}.

\begin{lemma}\label{lem:prepare}
Let $N,p$ be positive integers with $14p\ge \tst{thm:local-list}$ and
$N>100\cst{lem:deletion}\cst{thm:small-connected}\cst{thm:routing}$.
Let $G$ be a
$(N-7)p$-connected $K_{14p}$-minor-free graph, and let
$Z\subseteq V(G)$ have size $7p$. Suppose that $L$ is a bad list
assignment on $G$ with $|L|\ge6Np$.
Then there are disjoint induced subgraphs $D,H$ of $G\setminus Z$
satisfying the following conditions.
\begin{enumerate}
\item $\kappa(D)\ge14\cst{thm:routing}p$ and $\kappa(H)\ge Np$.
\item There is a double fan $\calP$ in $G$ from $Z$ to $V(D)$ avoiding $H$.
\item $H$ has a bad sublist $L_H\subseteq L$ with $|L_H|\ge|L|-6Np$.
\end{enumerate}
\end{lemma}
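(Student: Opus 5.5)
The plan mirrors Steps~1--2 of the proof of Lemma~\ref{lem:separation}, with $t$ replaced by $14p$ and only one small connected piece $D$ reserved. First delete $Z$. Since $|Z|=7p\le|L|$, coloring $Z$ with distinct colors and removing those colors from neighbors gives a bad sublist on $G\setminus Z$ of size at least $|L|-7p\ge 5Np$. Applying Lemma~\ref{cor:many-pieces} with $14p$ in place of $t$ and $k=14\cst{thm:routing}p$ yields an induced subgraph $D$ of $G\setminus Z$ with $\kappa(D)\ge14\cst{thm:routing}p$ and $\mathsf v(D)\le\cst{thm:small-connected}^2(14p)(\ln(14p))^3$. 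The hypotheses hold because $8\cst{lem:deletion}\cst{thm:small-connected}\cdot14\cst{thm:routing}p<5Np$ by the lower bound on $N$, and $k\ge 14p$ because $\cst{thm:routing}\ge10$. Only one of the pieces is needed.

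Next build the double fan. The graph $G$ is $(N-7)p$-connected, so $(N-7)p\ge14p=2|Z|$, and $\mathsf v(D)>14\cst{thm:routing}p\ge2|Z|$. Lemma~\ref{lem:double-fan} with $A=Z$ and $B=V(D)$ therefore gives a double fan from $Z$ to $V(D)$. Choose $\calP$ of minimum total length. By Remark~\ref{rem:short-double-fan} with $\ell=14p$, the graph $G[V(\calP)\setminus(Z\cup V(D))]$ is $(28p-1)$-degenerate, and every vertex off $\calP$ has at most three neighbors on each of the $14p$ paths, hence at most $42p$ neighbors on $V(\calP)$.

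Now construct $H$ by applying Lemma~\ref{lem:deletion} to $G$ with $t'=14p$, $X=Z\cup V(D)$, $Y=V(\calP)\setminus X$, and $d=42p$. The size condition is $|X|\le7p+O(p(\ln p)^3)\le 14p(\ln(14p))^5$, which holds since $14p\ge\tst{thm:local-list}$ (enlarging the threshold if necessary, as done earlier). This produces a bad sublist on $G\setminus(X\cup Y)$ of size at least $|L|-\cst{lem:deletion}(56p+1)\ge|L|-57\cst{lem:deletion}p$. This is still at least $4Np+1$, because $6Np-57\cst{lem:deletion}p>4Np$ when $N>100\cst{lem:deletion}$. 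Lemma~\ref{thm:bad-connected} with $k=Np$ then gives an induced $Np$-connected subgraph $H$ of $G\setminus(X\cup Y)$ with a bad sublist $L_H\subseteq L$ satisfying $|L_H|\ge|L|-57\cst{lem:deletion}p-4Np\ge|L|-6Np$.

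By construction $H$ avoids $Z$, $D$ and $\calP$, and $D$ avoids $Z$, so all three conclusions hold. The main obstacle is the bookkeeping. First, we must confirm that Lemma~\ref{lem:deletion} applies with $t'=14p$ in a graph that is only assumed $K_{14p}$-minor-free; this is precisely its hypothesis. Second, we must check that $|X|$ fits under $t'(\ln t')^5$ and that the list losses sum to at most $6Np$ with the stated lower bound on $N$.
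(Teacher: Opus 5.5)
Your proof is correct and follows the paper's argument. The steps are the same: a small highly connected $D$ in $G\setminus Z$, a minimum-length double fan from $Z$ to $V(D)$, Lemma~\ref{lem:deletion} with $X=Z\cup V(D)$, $Y=V(\calP)\setminus X$ and $d=42p$, and then Lemma~\ref{thm:bad-connected} with $k=Np$.

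The only difference is how you obtain $D$. You color $Z$ and invoke Lemma~\ref{cor:many-pieces}. The paper instead uses $\delta(G)\ge(N-7)p$ to get $\mathsf d(G\setminus Z)\ge(N-14)p/2$ and applies Lemma~\ref{thm:small-connected} directly, which avoids the detour through list coloring.
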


\begin{proof}
Since $\delta(G)\ge(N-7)p$, we have
\[
 \mathsf d(G\setminus Z)\ge\frac{(N-14)p}{2}
 \ge\cst{thm:small-connected}(14\cst{thm:routing}p).
\]
By Lemma~\ref{thm:small-connected}, with $t=14p$ and
$k=14\cst{thm:routing}p$, the graph $G\setminus Z$ contains an
induced $14\cst{thm:routing}p$-connected subgraph $D$ with
$\mathsf v(D)\le\cst{thm:small-connected}^2(14p)(\ln(14p))^3$.
Since $G$ is at least $14p$-connected and $\mathsf v(D)\ge14p=2|Z|$,
by Lemma~\ref{lem:double-fan}, there is a double fan $\calP$ from
$Z$ to $V(D)$. Choose one of minimum total length in $G$.

Let $X=Z\cup V(D)$ and $Y=V(\calP)\setminus X$.
By Remark~\ref{rem:short-double-fan}, with $\ell=14p$, the graph
$G[Y]$ is $42p$-degenerate, and every vertex outside $X\cup Y$
has at most $42p$ neighbors in $Y$.
The choice of $\tst{thm:local-list}$ before Lemma~\ref{cor:many-pieces}, together
with $14p\ge \tst{thm:local-list}$, ensures that
\[
 |X|\le7p+\cst{thm:small-connected}^2(14p)(\ln(14p))^3
 \le(14p)(\ln(14p))^5.
\]
Also $\cst{lem:deletion}(56p+1)\le57\cst{lem:deletion}p<Np$.
By Lemma~\ref{lem:deletion}, with $t=14p$ and $d=42p$,
$G\setminus(X\cup Y)$ has a bad sublist of size at least $|L|-Np$.
Since $|L|-Np\ge5Np\ge4Np+1$, by Lemma~\ref{thm:bad-connected},
it contains an induced $Np$-connected subgraph $H$ with a bad sublist
$L_H\subseteq L$ satisfying $|L_H|\ge|L|-5Np\ge|L|-6Np$.
The graph $H$ is disjoint from $D$ and every path in $\calP$.
\end{proof}

\subsection{Proof of Lemma~\ref{thm:recursive}}\label{subsec:recursive-proof}

We take
\begin{equation}\label{eq:choose-recursive}
 N=\ceil{100\cst{lem:deletion}\cst{thm:small-connected}\cst{thm:routing}}+1,\quad
 C=\ceil{\max\{60\cst{thm:density},3(10N+42\cst{lem:separation}+17)\}}.
\end{equation}
Let $a\ge a_0$ be fixed. We prove the lemma by induction on $b=0,1,\ldots,a$.
Let $p=2^{a-b}3^b$, and let $G$ be an $Np$-connected graph with a bad list assignment $L$ satisfying
\[
 |L|\ge C\left(3^a+p\right).
\]
We shall prove that $G$ is $(p,3p)$-woven. Let $R=\{r_1,\ldots,r_p\}$, $S=\{s_j:j\in[3p]\}$ and $T=\{t_j:j\in[3p]\}$ be arbitrary subsets of $V(G)$ with $|R|=p$, $|S|=|T|=3p$ and $s_j\ne t_k$ whenever $j\ne k$. We will find an $R$-rooted $K_p$ model and a $\{(s_j,t_j)\}_{j\in[3p]}$-linkage whose common vertices with the model are exactly $R\cap(S\cup T)$.

\begin{claim}\label{clm:disjoint-roots}
It suffices to consider the case where $R\cap(S\cup T)=\varnothing$.
\end{claim}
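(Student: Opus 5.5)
The plan is to reduce to disjoint roots by moving each root that lies on a terminal pair to a nearby fresh vertex, and then absorbing the connecting edge into the model or the path. Let $R_0=R\cap(S\cup T)$, so $|R_0|\le p$. For each $r\in R_0$ I will choose a new root $r'\notin R\cup S\cup T$ adjacent to $r$, with the chosen $r'$ pairwise distinct. This is possible greedily: $G$ is $Np$-connected, so $\delta(G)\ge Np$, while $|R\cup S\cup T|+|R_0|\le 8p<Np$ since $N$ is large.

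Let $R'=(R\setminus R_0)\cup\{r':r\in R_0\}$. Then $R'$ is disjoint from $S\cup T$ and has size $p$. Assuming the disjoint case, take a $K_p$ model $\calM'$ rooted at $R'$ and a linkage $\calP$ for $\{(s_j,t_j)\}$ with $V(\calM')\cap V(\calP)=\varnothing$. The obstacle is that $\calM'$ may have used vertices of $R_0$, which lie in $S\cup T$ and hence on paths of $\calP$. To prevent this, I will strengthen the reduction so that the disjoint-case argument is applied with the paths required to avoid the model entirely.

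In fact the definition already gives this: $V(\calM')\cap V(\calP)=R'\cap(S\cup T)=\varnothing$, and every vertex of $R_0$ is an endpoint of some path in $\calP$. Hence $R_0\cap V(\calM')=\varnothing$. Now modify the model. For $r\in R_0$, add $r$ to the branch set containing $r'$ via the edge $rr'$; the enlarged branch set is still connected. Then the branch sets remain pairwise disjoint, since each $r$ lies in no branch set of $\calM'$ and the $r$'s are distinct. The modified model is rooted at $R$, because the branch set containing $r'$ now meets $R$ exactly in $r$, as $r'\notin R$.

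Finally, the new model meets $\calP$ exactly in $R_0=R\cap(S\cup T)$. Keeping the linkage unchanged therefore gives the required pair. The only care needed is that adding $r$ does not create an extra root in some branch set, and that a single $r$ is never added to two branch sets; both follow from the distinctness of the chosen $r'$ and from $R_0\cap V(\calM')=\varnothing$.
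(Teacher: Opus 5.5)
Your proof is correct and follows essentially the same route as the paper's. You move each root in $R\cap(S\cup T)$ to a distinct fresh neighbour, which is possible since $\delta(G)\ge Np>8p-1$, and apply the disjoint case to $R'$. Since each original root lies on the linkage, the model avoids it, and you reattach it to the branch set of $r'$ via the edge $rr'$. The detour about ``strengthening the reduction'' is unnecessary, since, as you then observe, $V(\calM')\cap V(\calP)=R'\cap(S\cup T)=\varnothing$ already gives it.
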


We use the root-separation argument from the proof of Theorem~7.1
of Delcourt and Postle~\cite{DelcourtPostle2025}.

\begin{proof}
For each $r_i\in R\cap(S\cup T)$, choose a distinct neighbor $r'_i$
outside $R\cup S\cup T$. This is possible since
$|R\cup S\cup T|+p-1\le8p-1<\delta(G)$.
Let $r'_i=r_i$ for $r_i\notin S\cup T$, and let $R'=\{r'_1,\ldots,r'_p\}$.
Then $R'\cap(S\cup T)=\varnothing$.
Suppose that the required conclusion holds for $R'$.
Let $\calM$ be the resulting $R'$-rooted $K_p$ model and $\calQ$ the linkage
disjoint from $\calM$. For each $i$ with $r'_i\ne r_i$, add $r_i$ and
the edge $r_ir'_i$ to the branch set containing $r'_i$.
All added vertices belong to $S\cup T$ and hence were avoided by $\calM$.
The resulting branch sets therefore form an $R$-rooted $K_p$ model
whose intersection with $\calQ$ is exactly $R\cap(S\cup T)$.
\end{proof}

By Claim~\ref{clm:disjoint-roots}, we may assume that $R\cap(S\cup T)=\varnothing$. Choose a set $Z\supseteq R\cup S\cup T$ with $|Z|=7p$.

Suppose that $G$ contains a $K_{14p}$ minor. Since $\kappa(G)\ge Np\ge7p$, by Lemma~\ref{lem:rooted-minor}, $G$ contains a $K_{7p}$ model rooted at $Z$. Retain the branch sets rooted at $R$. For each $j\in[3p]$ with $s_j\ne t_j$, choose an $(s_j,t_j)$-path in the union of the branch sets rooted at $s_j,t_j$ and an edge joining them. If $s_j=t_j$, use the trivial path consisting only of $s_j$. These paths form a linkage disjoint from the retained model, as required. We may therefore assume that $G$ is $K_{14p}$-minor-free.

\medskip
\noindent\textbf{Base case: $b=0$.}
\medskip

Here $p=2^a$. By Observation~\ref{obs:bad-density}, the assumption on $|L|$, and the choice of $C$ in Equation~\eqref{eq:choose-recursive}, $G$ has an induced subgraph $H$ such that
\[
 \mathsf{d}(H)
 \ge\frac{|L|}{2}
 \ge\frac{C3^a}{2}
 \ge30\cst{thm:density}3^a
 >14\cst{thm:density}2^a\sqrt{a\ln28}
 \ge14\cst{thm:density}2^a\sqrt{\ln(14\cdot2^a)}.
\]
Here we used $2^a\sqrt{a}\le(2\sqrt2)^a<3^a$, $\sqrt{\ln28}<2$, and $\ln(14\cdot2^a)\le a\ln28$. So by Theorem~\ref{thm:density} with $t=14\cdot2^a$, $H$ (and hence $G$) contains a $K_{14p}$ minor, a contradiction. This proves the base case.

\Needspace{4\baselineskip}
\medskip
\noindent\textbf{Induction step: $1\le b\le a$.}
\medskip

Assume the lemma holds at $b-1$: every graph $J$ with
$\kappa(J)\ge2Np/3$ and a bad list assignment $L_J$ satisfying
$|L_J|\ge C(3^a+2p/3)$ is $(2p/3,2p)$-woven.

By the choice of $a_0$, $14p\ge14\cdot2^a\ge \tst{thm:local-list}$.
Also, $C\ge6N$ by Equation~\eqref{eq:choose-recursive},
so $|L|\ge Cp\ge6Np$.
By Lemma~\ref{lem:prepare}, there are disjoint induced subgraphs $D,H$ of $G\setminus Z$ and a double fan $\mathcal P_1$ from $Z$ to $V(D)$ avoiding $H$, where
\begin{equation}\label{eq:induction-connectivity}
 \kappa(D)\ge14\cst{thm:routing}p,\qquad \kappa(H)\ge Np.
\end{equation}
Moreover, $H$ has a bad sublist $L_H\subseteq L$ satisfying
\begin{equation}\label{eq:induction-H-list}
 |L_H|\ge|L|-6Np.
\end{equation}

We use $D$ to connect $R$ to $H$ and to link the pairs $(s_j,t_j)$.

By the choice of $N$ in Equation~\eqref{eq:choose-recursive}, $\kappa(G\setminus Z)\ge\kappa(G)-|Z|\ge(N-7)p\ge2p$. By Inequalities~\eqref{eq:induction-connectivity}, both $D$ and $H$ have at least $2p$ vertices. By Lemma~\ref{lem:menger}, there is a family $\mathcal P_2$ of $2p$ disjoint $(V(H),V(D))$-paths in $G\setminus Z$. Recall that $\mathcal P_1$ is a double fan from $Z$ to $V(D)$ avoiding $H$. The sets $Z,V(H),V(D)$ are pairwise disjoint. By Lemma~\ref{lem:join-fans} (applied to the sets $Z,V(H),V(D)$, with $r=p$), the subgraph formed by $\mathcal P_1$ and $\mathcal P_2$ contains a family $\mathcal R_1$ of $p$ $(R,V(D))$-paths, a family $\calQ_1$ of $6p$ $(Z\setminus R,V(D))$-paths, and a family $\mathcal R_2$ of $p$ $(V(H),V(D))$-paths. All $8p$ paths in $\mathcal R_1\cup\calQ_1\cup\mathcal R_2$ are pairwise disjoint and have no internal vertex in $Z\cup V(H)\cup V(D)$. Each vertex of $Z$ is an end of one path in $\mathcal R_1\cup\calQ_1$. Let $R_H=\{r_1^*,\ldots,r_p^*\}$ be the set of ends in $H$ of the paths in $\mathcal R_2$ (see Fig.~\ref{fig:induction-step} for an illustration).

Recall that $\kappa(D)\ge14\cst{thm:routing}p\ge10(4p)$ by Inequalities~\eqref{eq:induction-connectivity} and $\cst{thm:routing}\ge10$. By Lemma~\ref{thm:linked} (with $k=4p$), $D$ is $4p$-linked. For $i\in[p]$, pair the end in $D$ of the path in $\mathcal R_1$ from $r_i$ with the end in $D$ of the path in $\mathcal R_2$ from $r_i^*$. For $j\in[3p]$, pair the ends in $D$ of the paths in $\calQ_1$ from $s_j$ and $t_j$; these are the same path and the same end when $s_j=t_j$. Terminals belonging to different pairs are distinct. By the $4p$-linked property, $D$ contains a family $\mathcal R_3$ of $p$ paths joining the first $p$ pairs and a family $\calQ_2$ of $3p$ paths joining the other $3p$ pairs, with all $4p$ paths pairwise disjoint.

Extend each path in $\mathcal R_3$ along the corresponding paths in $\mathcal R_1$ and $\mathcal R_2$, and let $\mathcal R$ be the resulting $\{(r_i,r_i^*)\}_{i\in[p]}$-linkage. For each $j\in[3p]$ with $s_j\ne t_j$, extend the corresponding path in $\calQ_2$ along the two corresponding paths in $\calQ_1$, and call the resulting path $Q_j$. If $s_j=t_j$, let $Q_j$ be the trivial path consisting only of $s_j$. Let $\calQ=\{Q_j:j\in[3p]\}$. The paths of $\calQ_1$ starting in $Z\setminus(R\cup S\cup T)$ are not used. Since the connecting paths have no internal vertex in $Z$, all $4p$ paths in $\mathcal R\cup\calQ$ are pairwise disjoint. Moreover, $V(\calQ)\cap V(H)=\emptyset$ and $V(\mathcal R)\cap V(H)=R_H$.

\begin{figure}[!htb]
\centering
\begingroup
% Inline TikZ for the induction-step construction.
\definecolor{branchBlue}{HTML}{1479F5}
\definecolor{branchRose}{HTML}{E72C9B}
\definecolor{branchGreen}{HTML}{11B45B}
\definecolor{branchBrick}{HTML}{B64035}
\definecolor{branchYellow}{HTML}{E6B800}
\definecolor{branchPurple}{HTML}{8B45E6}
\definecolor{pathred}{HTML}{D8263D}
\definecolor{icGray}{HTML}{7A8793}
\definecolor{icBorder}{HTML}{B6C0C9}
\resizebox{\linewidth}{!}{%
\begin{tikzpicture}[x=.6cm,y=-.6cm,
  every node/.style={font=\fontsize{9}{10.5}\selectfont,text=black,
    inner sep=0pt,outer sep=0pt},
  icRegion/.style={draw=icBorder,line width=.5pt},
  icPart/.style={draw=icGray!70,line width=.4pt},
  icPath/.style={line width=1.4pt,line cap=round,line join=round},
  icBridge/.style={preaction={draw=white,line width=3.4pt}},
  icEdge/.style={draw=icGray,line width=.5pt},
  icVertex/.style={circle,fill=white,draw=black,line width=.45pt,inner sep=0pt,minimum size=3.8pt},
  icNeighbor/.style={circle,fill=white,draw=black,line width=.45pt,inner sep=0pt,minimum size=3.8pt},
  icBlock/.style={ellipse,draw=none,
    minimum width=8mm,minimum height=5.5mm,inner sep=0pt,
    font=\fontsize{8.5}{10}\selectfont,text=black},
  icLabel/.style={fill=white,inner sep=1pt},
  icNote/.style={font=\fontsize{8.2}{9.6}\selectfont,text=black}]
\path[use as bounding box] (3.0,0) rectangle (27.75,20.9);

% The construction lies in G; D and H avoid the protected set Z.
\draw[icRegion,dashed] (3.3,.45) rectangle (27.45,20.15);
\node[icLabel,anchor=west] at (7.35,.45) {$G$};
\draw[icRegion,fill=icGray!3] (3.45,.70) rectangle (5.9,19.75);
\node[icLabel] at (3.87,.70) {$Z$};
\draw[icRegion,fill=icGray!5] (7.35,1.35) rectangle (10.3,19.75);
\node[icLabel] at (8.825,1.35) {$D$};
\node[icNote] at (8.825,19.15) {$4p$-linked};
\draw[icRegion,fill=icGray!3]
  (12.3,1.35) rectangle (27.0,19.75);
\node at (12.95,1.88) {$H$};

% Only representative roots and noncoincident terminals in Z are shown.
\foreach \xleft/\xright in {3.67/5.68}{
  \draw[icPart,fill=white] (\xleft,1.30) rectangle (\xright,3.80);
  \draw[icPart,fill=white] (\xleft,4.20) rectangle (\xright,6.70);
  \draw[icPart,fill=white] (\xleft,7.20) rectangle (\xright,19.20);
}
\node[icLabel] at (4.22,1.30) {$S$};
\node[icLabel] at (4.22,4.20) {$T$};
\node[icLabel] at (4.22,7.20) {$R$};

% Vertical ellipses above and below the representative terminal vertices.
\foreach \xx in {4.70}{
  \foreach \yy in {1.55,3.48,4.48,6.38}{
    \node at (\xx,\yy) {\scalebox{.65}{$\vdots$}};}}

% The three disjoint bad subgraphs and their local woven models.
\foreach \top/\bottom/\innerTop/\innerBottom/\h in {
  3.2/7.85/4.0/7.55/1,
  8.9/13.55/9.7/13.25/2,
  14.6/19.25/15.4/18.95/3}{
  \draw[icPart,fill=icGray!4] (19.14,\innerTop) rectangle (23.34,\innerBottom);
  \node[icLabel] at (21.24,\innerTop) {$J_{\h}$};
}
\node[icNote] at (21.24,4.58) {$K_{2p/3}$ model};
\node[icNote] at (21.24,10.28) {$K_{2p/3}$ model};
\node[icNote] at (21.24,15.98) {$K_{2p/3}$ model};

% Neutral backgrounds distinguish the root partition from the colored paths.
\draw[draw=black!30,fill=black!3,line width=.45pt]
  (12.42,3.30) rectangle (14.18,18.40);
\node[icLabel,fill=black!3] at (13.30,3.30) {$R_H$};
\node[icNote] at (13.30,18.95) {$|R_H|=p$};
\foreach \top/\bottom/\part in {4.95/8.35/2,9.05/12.65/3,13.40/16.90/1}{
  \draw[draw=black!45,fill=black!8,line width=.5pt]
    (12.60,\top) rectangle (14.00,\bottom);
  \node[icLabel,fill=black!8] at (13.30,\top) {$A_{\part}$};
}
% The twelve shown neighbors belong to Y; other neighbors are omitted.
\draw[icPart,densely dashed,fill=white] (14.60,4.55) rectangle (15.43,17.5);
\node[icLabel] at (15.015,4.55) {$Y$};
\node[icNote] at (15.9,18.95) {$|Y|=2p$};

% Two representative nontrivial terminal paths; singleton paths and padding vertices are omitted.
% The T terminals are in reverse order so that the two paths in D are nested.
\foreach \yy in {2.0,3.05,4.90,5.95}{
  % Q_1 has the same shallow, medium-period wave as R_1.
  \draw[icPath,pathred] (4.7,\yy)
    .. controls (4.921,{\yy+.20}) and (5.142,{\yy+.20}) .. (5.3625,\yy)
    .. controls (5.583,{\yy-.20}) and (5.804,{\yy-.20}) .. (6.025,\yy)
    .. controls (6.246,{\yy+.20}) and (6.467,{\yy+.20}) .. (6.6875,\yy)
    .. controls (6.908,{\yy-.20}) and (7.129,{\yy-.20}) .. (7.35,\yy);
}
% The outer path in Q_2 represents another pair; its ends are unlabeled.
\foreach \yy in {2.0,5.95}{
  \draw[icPath,pathred] (7.35,\yy)
    .. controls (7.488,{\yy-.25}) and (7.625,{\yy-.25}) .. (7.7625,\yy)
    .. controls (7.900,{\yy+.25}) and (8.038,{\yy+.25}) .. (8.175,\yy)
    .. controls (8.313,{\yy-.25}) and (8.450,{\yy-.25}) .. (8.5875,\yy)
    .. controls (8.725,{\yy+.25}) and (8.863,{\yy+.25}) .. (9.0,\yy);
}
\draw[icPath,pathred] (9.0,2.0)
  .. controls (10.15,2.0) and (10.15,5.95) .. (9.0,5.95);
% The inner path in Q_2 joins the i-th pair and avoids the outer path.
\foreach \yy in {3.05,4.90}{
  \draw[icPath,pathred] (7.35,\yy)
    .. controls (7.445,{\yy-.20}) and (7.54,{\yy-.20}) .. (7.635,\yy)
    .. controls (7.730,{\yy+.20}) and (7.825,{\yy+.20}) .. (7.92,\yy)
    .. controls (8.015,{\yy-.20}) and (8.11,{\yy-.20}) .. (8.2,\yy);
}
\draw[icPath,pathred] (8.2,3.05)
  .. controls (9.1,3.05) and (9.1,4.90) .. (8.2,4.90);
\foreach \x in {4.7}{
  \foreach \y in {2.0,3.05,4.90,5.95}{\node[icVertex] at (\x,\y) {};}}
% Only the inner representative pair is labeled.
\foreach \x/\y/\lab in {4.7/3.05/s_i,4.7/4.90/t_i}{
  \node[anchor=west,font=\fontsize{7.7}{9}\selectfont] at (\x+.15,\y-.22) {$\lab$};
}
% Further members of both path families are omitted.
\foreach \yy in {2.525,5.425}{
  \node at (6.025,\yy) {\scalebox{.6}{$\vdots$}};}
\node at (9.37,3.98) {\scalebox{.65}{$\cdots$}};
\foreach \y in {2.0,3.05,4.90,5.95}{\node[icVertex] at (7.35,\y) {};}
\node[text=black] at (6.42,3.98) {$\mathcal Q_1$};
\node[text=black] at (7.97,3.98) {$\mathcal Q_2$};

% Six representative root paths: R_1, R_3, R_2 in this geometric order.
\foreach \i/\yy/\col in {
  1/6.2/branchBlue,2/7.55/branchRose,3/10.25/branchGreen,
  4/11.9/branchBrick,5/14.6/branchYellow,6/16.25/branchPurple}{
  \pgfmathsetmacro{\lefty}{\yy+2.0}
  % R_1: two shallow waves.
  \draw[icPath,\col] (4.7,\lefty)
    .. controls (4.921,{\lefty+.20}) and (5.142,{\lefty+.20}) .. (5.3625,\lefty)
    .. controls (5.583,{\lefty-.20}) and (5.804,{\lefty-.20}) .. (6.025,\lefty)
    .. controls (6.246,{\lefty+.20}) and (6.467,{\lefty+.20}) .. (6.6875,\lefty)
    .. controls (6.908,{\lefty-.20}) and (7.129,{\lefty-.20}) .. (7.35,\lefty);
  % R_3: six tight waves inside D, distinct from the two broad R_1 waves.
  \draw[icPath,\col] (7.35,\lefty)
    .. controls (7.4319,{\lefty-.25}) and (7.5139,{\lefty-.25}) .. (7.5958,\lefty)
    .. controls (7.6778,{\lefty+.25}) and (7.7597,{\lefty+.25}) .. (7.8417,\lefty)
    .. controls (7.9236,{\lefty-.25}) and (8.0056,{\lefty-.25}) .. (8.0875,\lefty)
    .. controls (8.1694,{\lefty+.25}) and (8.2514,{\lefty+.25}) .. (8.3333,\lefty)
    .. controls (8.4153,{\lefty-.25}) and (8.4972,{\lefty-.25}) .. (8.5792,\lefty)
    .. controls (8.6611,{\lefty+.25}) and (8.7431,{\lefty+.25}) .. (8.825,\lefty)
    .. controls (8.9069,{\lefty-.25}) and (8.9889,{\lefty-.25}) .. (9.0708,\lefty)
    .. controls (9.1528,{\lefty+.25}) and (9.2347,{\lefty+.25}) .. (9.3167,\lefty)
    .. controls (9.3986,{\lefty-.25}) and (9.4806,{\lefty-.25}) .. (9.5625,\lefty)
    .. controls (9.6444,{\lefty+.25}) and (9.7264,{\lefty+.25}) .. (9.8083,\lefty)
    .. controls (9.8903,{\lefty-.25}) and (9.9722,{\lefty-.25}) .. (10.0542,\lefty)
    .. controls (10.1361,{\lefty+.25}) and (10.2181,{\lefty+.25}) .. (10.3,\lefty);
  % R_2 descends back to the original roots in H, with the same broad wave.
  \draw[icPath,\col] (10.3,\lefty)
    .. controls (10.783,{\yy+2.0669}) and (11.267,{\yy+1.7331}) .. (11.75,{\yy+1.0})
    .. controls (12.233,{\yy+.2669}) and (12.717,{\yy-.0669}) .. (13.2,\yy);
  \foreach \xx in {4.7,7.35,10.3}{
    \node[icVertex,draw=black] at (\xx,\lefty) {};}
  \node[icVertex] at (13.2,\yy) {};
  \node[anchor=west] at (4.87,\lefty-.32) {$r_{\i}$};
  \node[fill=black!8,inner sep=.4pt] at (13.2,\yy-.56) {$r_{\i}^*$};
}
\node at (6.42,10.88) {$\mathcal R_1$};
\node at (8.825,10.88) {$\mathcal R_3$};
\node at (11.73,9.88) {$\mathcal R_2$};
\node[icNote] at (5.07,18.80) {\scalebox{.7}{$\vdots$}};
\node[icNote] at (13.2,17.65) {$\vdots$};

% Each root uses two local models. White gaps distinguish crossings from vertices.
% Selected edges end at the twelve displayed Y vertices.
\foreach \yy/\up/\down/\col in {
  6.2/5.85/6.55/branchBlue,7.55/7.2/7.9/branchRose,
  10.25/9.9/10.6/branchGreen,11.9/11.55/12.25/branchBrick,
  14.6/14.25/14.95/branchYellow,16.25/15.9/16.6/branchPurple}{
  \draw[icEdge,\col] (13.2,\yy)--(15.015,\up);
  \draw[icEdge,\col] (13.2,\yy)--(15.015,\down);
}
% B_1 and B_2 use J_1 and J_3.
\draw[icPath,branchBlue] (15.015,6.55)
  .. controls (16.7,6.25) and (17.55,5.35) .. (20.03,5.35);
\draw[icPath,branchBlue] (15.015,5.85)
  .. controls (15.9,5.0) and (15.25,2.95) .. (16.65,2.95)
  --(24.55,2.95) .. controls (25.2,2.95) and (25.2,3.15) .. (25.2,3.8)
  --(25.2,16.02) .. controls (25.2,16.75) and (24.8,16.75) .. (24.15,16.75)
  --(22.45,16.75);
\draw[icPath,branchRose] (15.015,7.9)
  .. controls (16.7,7.9) and (17.55,6.75) .. (20.03,6.75);
\draw[icPath,branchRose,icBridge] (15.015,7.2)
  --(15.35,7.2)
  .. controls (16.05,7.2) and (16.45,6.85) .. (16.45,6.15)
  --(16.45,3.30)
  .. controls (16.45,2.60) and (16.75,2.3) .. (17.45,2.3)
  --(25.75,2.3) .. controls (26.45,2.3) and (26.45,2.6) .. (26.45,3.2)
  --(26.45,17.4) .. controls (26.45,18.15) and (26.0,18.15) .. (25.3,18.15)
  --(22.45,18.15);
% B_3 and B_4 use J_1 and J_2.
\draw[icPath,branchGreen] (15.015,9.9)
  .. controls (16.3,9.9) and (16.6,8.7) .. (18.0,8.7)
  --(23.5,8.7) .. controls (24.15,8.7) and (24.15,8.45) .. (24.15,7.8)
  --(24.15,5.9) .. controls (24.15,5.35) and (23.8,5.35) .. (23.25,5.35)
  --(22.45,5.35);
\draw[icPath,branchGreen] (15.015,10.6)
  .. controls (16.4,10.6) and (17.5,11.05) .. (20.03,11.05);
\draw[icPath,branchBrick,icBridge] (15.015,11.55)
  .. controls (16.1,11.55) and (16.1,8.18) .. (18.0,8.18)
  --(21.72,8.18) .. controls (22.45,8.18) and (22.45,7.95) .. (22.45,7.35)
  --(22.45,6.75);
\draw[icPath,branchBrick] (15.015,12.25)
  .. controls (16.4,12.25) and (17.5,12.45) .. (20.03,12.45);
% B_5 and B_6 use J_2 and J_3.
\draw[icPath,branchYellow] (15.015,14.25)
  .. controls (16.1,14.25) and (16.6,14.35) .. (18.0,14.35)
  --(23.5,14.35) .. controls (24.15,14.35) and (24.15,14.1) .. (24.15,13.45)
  --(24.15,11.6) .. controls (24.15,11.05) and (23.8,11.05) .. (23.25,11.05)
  --(22.45,11.05);
\draw[icPath,branchYellow] (15.015,14.95)
  .. controls (16.8,14.95) and (17.2,16.75) .. (20.03,16.75);
\draw[icPath,branchPurple,icBridge] (15.015,15.9)
  .. controls (16.1,15.9) and (16.1,13.84) .. (18.0,13.84)
  --(21.72,13.84) .. controls (22.45,13.84) and (22.45,13.6) .. (22.45,13.05)
  --(22.45,12.45);
\draw[icPath,branchPurple] (15.015,16.6)
  .. controls (16.8,16.6) and (17.2,18.15) .. (20.03,18.15);
\foreach \yy in {5.85,6.55,7.2,7.9,9.9,10.6,11.55,12.25,14.25,14.95,15.9,16.6}{
  \node[icNeighbor] at (15.015,\yy) {};}

% The four shown local branch sets are pairwise adjacent in each model.
\foreach \upper/\lower in {5.35/6.75,11.05/12.45,16.75/18.15}{
  \draw[icEdge] (20.03,\upper)--(22.45,\upper);
  \draw[icEdge] (20.03,\lower)--(22.45,\lower);
  \draw[icEdge] (20.03,\upper)--(20.03,\lower);
  \draw[icEdge] (22.45,\upper)--(22.45,\lower);
  \draw[icEdge] (20.03,\upper)--(22.45,\lower);
  \draw[icEdge] (20.03,\lower)--(22.45,\upper);}
\foreach \xx/\yy/\number/\col in {
  20.03/5.35/1/branchBlue,20.03/6.75/2/branchRose,
  22.45/5.35/3/branchGreen,22.45/6.75/4/branchBrick,
  20.03/11.05/3/branchGreen,20.03/12.45/4/branchBrick,
  22.45/11.05/5/branchYellow,22.45/12.45/6/branchPurple,
  20.03/16.75/5/branchYellow,20.03/18.15/6/branchPurple,
  22.45/16.75/1/branchBlue,22.45/18.15/2/branchRose}{
  \node[icBlock,fill=\col!25] at (\xx,\yy) {\number};}
% Black boundary roots of the twelve local branch sets.
\foreach \xx/\yy in {
  19.3633/5.35,19.3633/6.75,
  23.1167/5.35,22.45/7.2083,
  19.3633/11.05,19.3633/12.45,
  23.1167/11.05,22.45/12.9083,
  19.3633/16.75,19.3633/18.15,
  23.1167/16.75,23.1167/18.15}{
  \node[icVertex] at (\xx,\yy) {};}
\foreach \yy in {7.3,13.0,18.7}{\node[icNote] at (21.24,\yy) {$\cdots$};}
% Label isolated portions of each branch, away from crossings.
\node[text=black] at (24.4,3.65) {$B_1$};
\node[text=black] at (24.65,18.75) {$B_2$};
\node[text=black] at (18.4,10.35) {$B_3$};
\node[text=black] at (18.4,12.90) {$B_4$};
\node[text=black] at (17.8,15.75) {$B_5$};
\node[text=black] at (17.8,18.35) {$B_6$};

% Keep the white-filled root vertices above their incident edges.
\foreach \yy in {6.2,7.55,10.25,11.9,14.6,16.25}{
  \node[icVertex] at (13.2,\yy) {};}
\end{tikzpicture}%
}
\endgroup
\caption{Illustration for the induction step. Ellipses labeled $i$ belong to the branch set $B_i$ of the $R$-rooted model.}
\label{fig:induction-step}
\end{figure}

We next construct the branch sets of the required $R$-rooted $K_p$ model, using $\mathcal R$ and three woven subgraphs of $H$.

By Inequalities~\eqref{eq:induction-connectivity}, $\delta(H)\ge Np\ge3p$.
Choose two neighbors of each vertex of $R_H$, all $2p$ choices distinct
and outside $R_H$: at each choice fewer than $3p$ vertices are forbidden.
Let $Y$ be the set of these neighbors.
By Inequality~\eqref{eq:induction-H-list} and the choice of
$C$ in Equation~\eqref{eq:choose-recursive},
$|L_H|-3p\ge|L|-(6N+3)p\ge42\cst{lem:separation}p$.
Color $R_H\cup Y$ with distinct colors from $L_H$, delete these vertices,
and remove from each remaining list the colors used on its neighbors
in $R_H\cup Y$.
The resulting bad sublist $L_0$ on $H\setminus(R_H\cup Y)$ has
$|L_0|\ge|L_H|-3p$.

Apply Lemma~\ref{lem:separation} with $t=14p$ twice, first to
$H\setminus(R_H\cup Y)$ and then to one of the resulting subgraphs.
The two bad sublists obtained in the first application have size at least
$28\cst{lem:separation}p$, so the second application is valid.
This produces three disjoint induced subgraphs $H_1,H_2,H_3$ with
bad sublists $L_i\subseteq L_0$ satisfying
$|L_i|\ge|L_0|-28\cst{lem:separation}p>4Np$.
By Lemma~\ref{thm:bad-connected}, each $H_i$ contains an induced
$Np$-connected subgraph $J_i$ with a bad sublist $L_i^*\subseteq L_i$.
By Equation~\eqref{eq:choose-recursive} and the assumption on $|L|$,
\[
 |L_i^*|\ge|L|-(10N+3+28\cst{lem:separation})p
 \ge C\left(3^a+\frac{2p}{3}\right).
\]
Since $\kappa(J_i)\ge Np\ge2Np/3$, the induction hypothesis for $b-1$
makes each $J_i$ $(2p/3,2p)$-woven.

By Inequalities~\eqref{eq:induction-connectivity} and the choice of $N$ in Equation~\eqref{eq:choose-recursive}, $\kappa(H\setminus R_H)\ge(N-1)p\ge10(2p)$. By Lemma~\ref{thm:linked}, $H\setminus R_H$ is $2p$-linked. The subgraphs $J_1,J_2,J_3$ are pairwise disjoint and lie in $H\setminus(R_H\cup Y)$. Since $b\ge1$, $p$ is divisible by $3$.

Partition $R_H$ into three equal sets $A_1,A_2,A_3$. For each $j\in[3]$, choose $U_j\subseteq V(J_j)$ of size $2p/3$ and assign its vertices bijectively to $R_H\setminus A_j$. For each $r_i^*\in R_H$, pair its two selected neighbors in $Y$ with its two assigned vertices in $U_1\cup U_2\cup U_3$. These $2p$ pairs have distinct ends. By the $2p$-linked property of $H\setminus R_H$, there is a linkage $\mathcal F$ in $H\setminus R_H$ joining these pairs.

By Lemma~\ref{lem:reroute}, applied successively to $J_1,J_2,J_3$ in $H\setminus R_H$ with root sets $U_1,U_2,U_3$, we may reroute $\mathcal F$ and choose a $K_{2p/3}$ model $\calM_j$ in each $J_j$ rooted at $U_j$ so that for each $j\in[3]$, $V(\mathcal F)\cap V(\calM_j)=U_j$. At each step, the new linkage uses only vertices of the preceding linkage and the current $J_j$. Since the $J_j$ are pairwise disjoint and the pairing remains fixed, later reroutings preserve this equality for earlier models.

For each $i\in[p]$, let $B_i$ be the union of the $(r_i,r_i^*)$-path in $\mathcal R$, the two selected edges from $r_i^*$ to $Y$, the two corresponding paths in $\mathcal F$, and the branch sets of the local models containing the two vertices assigned to $r_i^*$ (see Fig.~\ref{fig:induction-step} for an illustration). These subgraphs are connected and pairwise disjoint, since the local branch sets and paths assigned to different roots are disjoint and $V(\mathcal R)\cap V(H)=R_H$. Each $B_i$ contains exactly one vertex of $R$, namely $r_i$. Any two use a common local model, because each uses two of the three models $\calM_1,\calM_2,\calM_3$. Their local branch sets in that model are distinct and adjacent. Thus $B_1,\ldots,B_p$ form the required $R$-rooted $K_p$ model.

The paths in $\calQ$ avoid $H$ and are disjoint from $\mathcal R$. Hence $\calQ$ is a $\{(s_j,t_j)\}_{j\in[3p]}$-linkage disjoint from the model. By Claim~\ref{clm:disjoint-roots}, $G$ is $(p,3p)$-woven.

This completes the proof of Lemma~\ref{thm:recursive}.

\paragraph{Disclosure of AI use.}
We used AI tools to help combine
the result of OpenAI~\cite{OpenAI2026} with our earlier work~\cite{GuXu2026}
and to provide the proof of Lemma~\ref{thm:local-list}.
We have carefully checked all AI-generated material incorporated
into this paper and take full responsibility for its content.

\begingroup
\small
\sloppy

\endgroup
\end{document}